\def\name{Nan Shi}
\title{Almost-Gelfand Property of Symmetric Pairs}
\author{\name}
\date{}
\newtheorem{theorem}[subsubsection]{Theorem}
\newtheorem{lemma}[subsubsection]{Lemma}
\newtheorem{corollary}[subsubsection]{Corollary}
\newtheorem{proposition}[subsubsection]{Proposition}
\theoremstyle{definition}
\newtheorem{definition}[subsubsection]{Definition}
\theoremstyle{plain}
\newtheorem{notation}[subsubsection]{Notation}
\numberwithin{equation}{section}
\begin{document}

\maketitle
\begin{abstract}
In this paper, we show almost-Gelfand property of connected symmetric pairs $(G, H)$ over finite fields of large characteristics. We will show almost-$\sigma$-invariant property of double coset $H \backslash G /H$ where $\sigma$ is the associated anti-involution, and $\epsilon$-version of Gelfand's trick to make use of the fixed points of anti-involution.
\end{abstract}

\thispagestyle{empty}

\tableofcontents

\newpage
\setcounter{page}{1}
\section{Introduction}
\subsection{Symmetric pairs and Gelfand pairs}
\begin{definition}
A \textbf{symmetric pair} is a triple $(G, H, \theta)$ where $H \subset G$ is a subgroup, and $\theta$ is an involution (i.e. $\theta^2 = 1$) of $G$ such that $H = G^\theta$. For a symmetric pair $(G, H, \theta)$ we define the associated anti-involution $\sigma: G \rightarrow G$ by $\sigma(g) \coloneqq \theta(g^{-1})$.
\end{definition}

Symmetric pairs are important objects in representation theory. Questions about the spectrum decomposition of the symmetric space $X \coloneqq G/H$ have been studied in locally compact case, in relation with number theory via Langlands program.

\begin{definition}
A \textbf{Gelfand pair} is a finite group $G$ and their subgroup $H$, such that for any irreducible representation $\pi$ of $G$ $\dim\mathrm{Hom}_H(\pi, \mathbb{C}) \leq 1$.
\end{definition}

The notion of Gelfand property can be generalized to other types of groups. For finite groups and compact topological groups, there are equivalent definitions for Gelfand pairs:

\begin{lemma}
When $G$ is a finite group, the following are equivalent:
\begin{itemize}
\item $(G, H)$ is a Gelfand pair.
\item The permutation representation $\mathbb{C}[G/H]$ of $G$ is multiplicity-free.
\item The algebra of $(H, H)$-invariant functions on $G$ with multiplication defined by convolution is commutative.

\end{itemize}
\end{lemma}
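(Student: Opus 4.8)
\section*{Proof proposal}

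The plan is to prove the chain of equivalences by treating $(1)\Leftrightarrow(2)$ via Frobenius reciprocity and $(2)\Leftrightarrow(3)$ via the identification of the bi-invariant convolution algebra with the endomorphism algebra of the permutation module, followed by Wedderburn's theorem.

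\textbf{Step 1: Gelfand $\Leftrightarrow$ multiplicity-free.} For an irreducible representation $\pi$ of $G$, Frobenius reciprocity gives
\[
\mathrm{Hom}_H(\pi, \mathbb{C}) \;\cong\; \mathrm{Hom}_H(\mathrm{Res}^G_H \pi, \mathbb{C}) \;\cong\; \mathrm{Hom}_G(\pi, \mathrm{Ind}^G_H \mathbb{C}) \;=\; \mathrm{Hom}_G(\pi, \mathbb{C}[G/H]),
\]
and the dimension of the last space is precisely the multiplicity of $\pi$ in $\mathbb{C}[G/H]$. Since $G$ is finite and this holds for every irreducible $\pi$, the condition $\dim \mathrm{Hom}_H(\pi, \mathbb{C}) \le 1$ for all $\pi$ is exactly the statement that $\mathbb{C}[G/H]$ is multiplicity-free.

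\textbf{Step 2: multiplicity-free $\Leftrightarrow$ commutative Hecke algebra.} Write $\mathcal{H}$ for the algebra of $(H,H)$-invariant functions on $G$ with convolution. I would first establish the (anti-)isomorphism $\mathcal{H} \cong \mathrm{End}_G(\mathbb{C}[G/H])$: a bi-invariant function acts on $\mathbb{C}[G/H]$ by convolution, this action is $G$-equivariant, and conversely every $G$-endomorphism of $\mathbb{C}[G/H]$ arises in this way; the cleanest check is that both sides are free of rank $|H\backslash G/H|$, with bases matched up by double-coset indicator functions and the corresponding orbit-sum endomorphisms. Then, decomposing $\mathbb{C}[G/H] \cong \bigoplus_i m_i \pi_i$ into isotypic components with the $\pi_i$ pairwise non-isomorphic irreducibles, Schur's lemma yields $\mathrm{End}_G(\mathbb{C}[G/H]) \cong \prod_i \mathrm{Mat}_{m_i}(\mathbb{C})$. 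This product of matrix algebras is commutative if and only if every $m_i \le 1$, i.e. if and only if $\mathbb{C}[G/H]$ is multiplicity-free. Combining the two steps closes the loop among all three conditions.

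The only point requiring care is the bookkeeping in the isomorphism $\mathcal{H} \cong \mathrm{End}_G(\mathbb{C}[G/H])$ — keeping track of left versus right convolution, which may produce the opposite algebra — but since commutativity is insensitive to passing to the opposite algebra, this is not an essential obstacle; the rest is a routine application of Frobenius reciprocity, Schur's lemma, and Wedderburn's structure theorem for semisimple algebras over $\mathbb{C}$.
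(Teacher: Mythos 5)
Your proof is correct and follows the same route the paper intends: the paper defers to a citation described as ``a simple application of Frobenius reciprocity,'' and your identification $\mathcal{H}\cong\mathrm{End}_G(\mathbb{C}[G/H])$ followed by Schur/Wedderburn is exactly the argument the paper itself reuses later in its proof of the $\epsilon$-version of Gelfand's trick. No gaps; your remark that passing to the opposite algebra is harmless for commutativity correctly disposes of the only subtle point.
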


\begin{proof}
See \cite{Angeli} Section 1, and Theorem 2.3, as a simple application of Frobenius reciprocity.
\end{proof}

\begin{lemma}
When $G$ is a compact topological group, the following are equivalent:
\begin{itemize}
\item $(G, H)$ is a Gelfand pair.
\item The representation $\mathrm{L}^2(G/H)$ of $G$ is multiplicity-free.
\item The algebra of $(H, H)$-invariant compactly supported measures on $G$ with multiplication defined by convolution is commutative.

\end{itemize}
\end{lemma}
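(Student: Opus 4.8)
The plan is to push everything through the Peter--Weyl theorem and compare the three conditions one irreducible representation at a time. Write $\widehat G$ for the set of equivalence classes of irreducible unitary representations of $G$; since $G$ is compact each $(\pi, V_\pi)\in\widehat G$ is finite-dimensional, and $L^2(G)$ is the Hilbert-space completion of $\bigoplus_{\pi\in\widehat G} V_\pi\otimes V_\pi^{*}$ as a representation of $G\times G$. Restricting the right factor to $H$ identifies $L^2(G/H)$ with the right-$H$-invariant part of $L^2(G)$, so that the $\pi$-isotypic component of $L^2(G/H)$ is $V_\pi\otimes (V_\pi^{*})^{H}$ and the multiplicity of $\pi$ in $L^2(G/H)$ equals $\dim (V_\pi^{*})^{H}=\dim\mathrm{Hom}_H(\pi,\mathbb C)$ (equivalently this is Frobenius reciprocity, $\mathrm{Hom}_G(\pi,\mathrm{Ind}_H^G\mathbb C)\cong\mathrm{Hom}_H(\pi|_H,\mathbb C)$, since $L^2(G/H)=\mathrm{Ind}_H^G\mathbb C$). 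Consequently $L^2(G/H)$ is multiplicity-free if and only if $\dim\mathrm{Hom}_H(\pi,\mathbb C)\le 1$ for all $\pi$, which is exactly the Gelfand condition; this gives the first equivalence.

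For the third condition, let $\mathcal M(G)$ be the convolution algebra of finite regular Borel measures on $G$ (all such measures are automatically compactly supported), and let $e_H\in\mathcal M(G)$ be the push-forward of the Haar probability measure of the closed subgroup $H$. Then $e_H$ is an idempotent and the algebra $\mathcal H(G,H)$ of bi-$H$-invariant measures is precisely $e_H*\mathcal M(G)*e_H$, because averaging a left- or right-$H$-invariant measure over $H$ does nothing. For each $\pi\in\widehat G$ the operator $\pi(e_H)=\int_H\pi(h)\,dh$ is the orthogonal projection $P_\pi$ onto $V_\pi^{H}$, so $\mathcal H(G,H)$ acts on $V_\pi^{H}$ and we obtain an algebra homomorphism $\Phi\colon\mathcal H(G,H)\to\prod_{\pi\in\widehat G}\mathrm{End}(V_\pi^{H})$, where $\Phi(\mu)=\pi(\mu)|_{V_\pi^{H}}$ on the $\pi$-factor.

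The two facts I would prove about $\Phi$ are that it is injective and that it is surjective onto each factor. Injectivity: if $\mu=e_H*\mu*e_H$ satisfies $\Phi(\mu)=0$ then $\pi(\mu)=P_\pi\,\pi(\mu)\,P_\pi$ vanishes on $V_\pi^{H}$ and, being already zero on $(V_\pi^{H})^{\perp}$, hence on all of $V_\pi$, for every $\pi$; since $\langle\pi(\mu)v,w\rangle$ is the integral of the matrix coefficient $g\mapsto\langle\pi(g)v,w\rangle$ against $\mu$, and the matrix coefficients of irreducibles are dense in $C(G)$ by Peter--Weyl, it follows that $\mu=0$. Surjectivity onto the $\pi$-factor: by the Burnside density theorem, irreducibility of $\pi$ and finite-dimensionality of $V_\pi$ give that the span of $\pi(G)$, hence $\pi(\mathcal M(G))$, is all of $\mathrm{End}(V_\pi)$, so $\pi(\mathcal H(G,H))=P_\pi\,\mathrm{End}(V_\pi)\,P_\pi=\mathrm{End}(V_\pi^{H})$. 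Therefore $\mathcal H(G,H)$ is commutative if and only if every $\mathrm{End}(V_\pi^{H})$ is commutative, i.e.\ if and only if $\dim V_\pi^{H}\le 1$ for all $\pi$. Finally $\dim V_\pi^{H}=\dim\mathrm{Hom}_H(\pi,\mathbb C)$ (send an $H$-invariant functional to its Riesz representative, which lands in $V_\pi^{H}$), so this too is the Gelfand condition, completing the chain.

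The principal obstacle is not the formal part — which mirrors the finite-group case verbatim — but keeping the analysis honest in the topological setting: one must work with the measure algebra $\mathcal M(G)$ rather than with bi-invariant functions, since the closed subgroup $H$ need not be open, and both halves of the argument about $\Phi$ rest on genuinely infinite-dimensional input, namely the density of matrix coefficients in $C(G)$ (Peter--Weyl) for injectivity and, for surjectivity, the fact special to \emph{compact} groups that every irreducible unitary representation is finite-dimensional, which is what lets the Burnside density theorem apply and keeps each $\mathrm{End}(V_\pi^{H})$ a finite matrix algebra.
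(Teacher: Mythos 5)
Your argument is correct: the identification of the multiplicity of $\pi$ in $L^2(G/H)$ with $\dim V_\pi^H$ via Peter--Weyl, and the analysis of the bi-$H$-invariant measure algebra through the maps $\mu\mapsto \pi(\mu)|_{V_\pi^H}$ (injective by density of matrix coefficients, surjective onto each $\mathrm{End}(V_\pi^H)$ by Burnside), together give all three equivalences. The paper itself supplies no argument here --- it merely cites Yakimova's thesis --- and what you have written is precisely the standard proof that reference contains, so there is nothing substantive to contrast; your care about working with measures rather than functions (since $H$ need not be open) is exactly the point that distinguishes the compact case from the finite one.
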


\begin{proof}
See \cite{Yakimova} Definition 1.
\end{proof}

A simple criterion for the condition of the last equivalent definition was introduced by Gelfand:

\begin{proposition}[Gelfand's trick]\label{Gelfand_trick}
If there is an anti-involution $\sigma$ acting trivially on the algebra of $(H, H)$-invariant functions on $G$, then the algebra is commutative.
\end{proposition}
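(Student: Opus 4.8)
The plan is to realise the anti-involution $\sigma$ as a genuine \emph{anti}-automorphism of the convolution algebra $\mathcal{H}$ of $(H,H)$-invariant functions on $G$; granting this, the hypothesis that $\sigma$ acts trivially on $\mathcal{H}$ forces commutativity in a single line. Fix a Haar measure on $G$ (counting measure in the finite case), so that $G$ is unimodular in all cases under consideration, and write the product as $(f_1 * f_2)(g) = \int_G f_1(x)\, f_2(x^{-1}g)\, dx$; let $\sigma$ act on functions by $f^\sigma(g) \coloneqq f(\sigma(g))$. First I would record that $f \mapsto f^\sigma$ sends $\mathcal{H}$ into itself: since $H = G^\theta$ we have $\sigma(h) = \theta(h^{-1}) = h^{-1} \in H$ for $h \in H$, and because $\sigma$ reverses products, $f^\sigma(h_1 g h_2) = f\big(\sigma(h_2)\,\sigma(g)\,\sigma(h_1)\big) = f(\sigma(g)) = f^\sigma(g)$, so bi-invariance is preserved.

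The heart of the matter is the identity $(f_1 * f_2)^\sigma = f_2^\sigma * f_1^\sigma$ for $f_1, f_2 \in \mathcal{H}$, which I would establish by a change of variables. Starting from
\[
(f_2^\sigma * f_1^\sigma)(g) = \int_G f_2(\sigma(y))\, f_1\big(\sigma(g)\,\sigma(y)^{-1}\big)\, dy,
\]
where I have used $\sigma(y^{-1}g) = \sigma(g)\sigma(y)^{-1}$ (valid because $\sigma$ is an anti-involution, so in particular $\sigma(y^{-1}) = \sigma(y)^{-1}$), substitute first $z = \sigma(y)$ and then $x = \sigma(g)z^{-1}$. Since $\sigma$ is the composition of the automorphism $\theta$ with inversion and $G$ is unimodular, both substitutions preserve the measure, and the integral collapses to $\int_G f_1(x)\, f_2(x^{-1}\sigma(g))\, dx = (f_1 * f_2)(\sigma(g)) = (f_1 * f_2)^\sigma(g)$. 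Now if $\sigma$ acts trivially on $\mathcal{H}$, i.e. $f^\sigma = f$ for all $f \in \mathcal{H}$, then $f_1 * f_2 = (f_1 * f_2)^\sigma = f_2^\sigma * f_1^\sigma = f_2 * f_1$, so $\mathcal{H}$ is commutative.

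I do not expect a genuine obstacle, as the statement is elementary; the only point requiring care is the bookkeeping in the change of variables — in particular, checking that unimodularity of $G$ (automatic for the finite and compact groups relevant to this paper) is exactly what licenses the measure-preservation, and that $\sigma$ really does stabilise the bi-invariant subspace so that the whole computation stays inside $\mathcal{H}$. In the finite-field setting of the later sections one wants the same formalism for $\mathbb{C}$-valued functions with counting measure, which is the degenerate and easiest instance of the above.
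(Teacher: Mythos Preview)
Your proof is correct and follows essentially the same approach as the paper: the paper's argument (which is really just a one-line sketch plus a reference) is that $\sigma$ reverses the order of multiplication in the Hecke algebra, whence triviality of $\sigma$ forces commutativity. You have simply written this out in full at the level of functions and convolution, supplying the change-of-variables verification that $(f_1 * f_2)^\sigma = f_2^\sigma * f_1^\sigma$ and the check that $\sigma$ preserves bi-invariance, both of which the paper leaves implicit.
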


\begin{proof}
See \cite{Ryan} Proposition 11. A brief idea of the proof is that $\sigma$-action on the double coset $H \backslash G /H$ reverses the order of multiplication
\[
Hg_1H \cdot Hg_2H = \sigma(Hg_1H \cdot Hg_2H) = \sigma(Hg_2H)\sigma(Hg_1H) = Hg_2H \cdot Hg_1H.
\]

In particular, this shows that $(G, H)$ is a Gelfand pair.
\end{proof}

For reductive groups over a local field, we have similar notions of Gelfand pairs:

\begin{lemma}
When $G$ is a reductive groups over a local field and $H$ is a closed subgroup, there are three non-equivalent notions of Gelfand pairs: (in fact $GP1 \Rightarrow GP2 \Rightarrow GP3$)
\begin{itemize}
\item (GP1) For any irreducible admissible representation $\pi$ of $G$ $\dim\mathrm{Hom}_H(\pi, \mathbb{C}) \leq 1$.
\item (GP2) For any irreducible admissible representation $\pi$ of $G$, $\tilde\pi$ denotes the smooth dual, we have $\dim\mathrm{Hom}_H(\pi, \mathbb{C})\cdot\dim\mathrm{Hom}_H(\tilde\pi, \mathbb{C}) \leq 1$.
\item (GP3) For any unitary representation $\pi$ of $G$ on Hilbert spaces $\dim\mathrm{Hom}_H(\pi, \mathbb{C}) \leq 1$.
\end{itemize}
\end{lemma}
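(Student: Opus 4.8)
The plan is to prove the two implications separately; GP1 $\Rightarrow$ GP2 is formal, and the content lies in GP2 $\Rightarrow$ GP3.

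For GP1 $\Rightarrow$ GP2: if $\pi$ is irreducible admissible then so is its smooth contragredient $\tilde\pi$, so applying GP1 to $\pi$ and to $\tilde\pi$ bounds each of $\dim\mathrm{Hom}_H(\pi,\mathbb{C})$ and $\dim\mathrm{Hom}_H(\tilde\pi,\mathbb{C})$ by $1$; being non-negative integers, their product is then $\leq 1$.

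For GP2 $\Rightarrow$ GP3 the key is a symmetry forced by unitarity. By the direct-integral decomposition of unitary representations (reductive groups over local fields are of type~I) it is enough to treat $(\pi,\mathcal H)$ irreducible unitary, and since $\mathcal H$ contains a dense $G$-stable subspace $\pi^\infty$ of smooth vectors --- an irreducible admissible representation --- every continuous $H$-invariant functional on $\mathcal H$ restricts injectively to $\pi^\infty$, so it suffices to show $\dim\mathrm{Hom}_H(\pi^\infty,\mathbb{C})\leq 1$. Now the $G$-invariant Hermitian form on $\mathcal H$ yields a $G$-equivariant identification of the complex-conjugate representation $\overline{\pi^\infty}$ with the smooth contragredient $\tilde\pi$; moreover $\ell\mapsto\overline{\ell(\cdot)}$ is a conjugate-linear bijection between $\mathrm{Hom}_H(\pi^\infty,\mathbb{C})$ and $\mathrm{Hom}_H(\overline{\pi^\infty},\mathbb{C})\cong\mathrm{Hom}_H(\tilde\pi,\mathbb{C})$, so these spaces have the same complex dimension. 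Hence the two factors in GP2 coincide; GP2 then forces $\dim\mathrm{Hom}_H(\pi^\infty,\mathbb{C})^{2}\leq 1$, whence $\dim\mathrm{Hom}_H(\pi^\infty,\mathbb{C})\leq 1$, which is GP3.

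I expect the only genuine difficulty to be functional-analytic bookkeeping rather than anything structural: fixing the precise meaning of $\mathrm{Hom}_H(\pi,\mathbb{C})$ for a unitary as opposed to an admissible $\pi$, the passage to smooth vectors, the identification $\overline{\pi^\infty}\cong\tilde\pi$ (which in the archimedean case rests on Casselman--Wallach theory), and the direct-integral reduction to irreducibles. All of this is standard, and I would invoke it by citation (Gross; Aizenbud--Gourevitch--Rallis--Schiffmann) rather than reprove it; the non-equivalence of the three notions, while true, plays no role in the sequel.
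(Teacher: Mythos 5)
Your proposal is correct in outline, but note that the paper offers no proof of this lemma at all: it is stated as background, and the implications $GP1\Rightarrow GP2\Rightarrow GP3$ are quoted from the literature (they appear, with essentially your argument, in the paper's reference [Aizenbud--Gourevitch--Rallis--Schiffmann] and in Gross's work on Gelfand pairs). So there is no in-paper argument to compare against; what you wrote is the standard proof, and it is sound. The $GP1\Rightarrow GP2$ step is exactly as formal as you say, and your $GP2\Rightarrow GP3$ step --- density of smooth vectors, the conjugate-linear identification $\overline{\pi^\infty}\cong\tilde\pi$ coming from the invariant Hermitian form, hence equality of the two factors in $GP2$ and the bound $d^2\leq 1$ --- is the accepted argument.

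One small caution: as literally stated in the lemma, $GP3$ quantifies over \emph{all} unitary representations, and in that form it is false ($\pi\oplus\pi$ has multiplicity $2$); the intended and standard formulation restricts to \emph{irreducible} unitary representations, and your direct-integral reduction does not repair the literal statement (a direct integral can still contain an atom with multiplicity). You should simply read $GP3$ with ``irreducible'' inserted, as the cited sources do, after which your reduction step is unnecessary and the rest of your argument goes through. The remaining functional-analytic points you defer to citation (admissibility of smooth vectors of irreducible unitaries, Casselman--Wallach in the archimedean case, injectivity of restriction to the dense subspace of smooth vectors) are indeed standard and appropriately handled by reference.
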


It is classically known that if the field $F$ is Archimedean, $G$ is a connected Lie group and $H$ is compact, then $(G,H)$ is a Gelfand pair. The Gelfand property is often satisfied by symmetric pairs, for example $\big(\mathrm{GL}_{n + m}(\mathbb{C}), \mathrm{GL}_{m + n}(\mathbb{C})\big)$, $\big(\mathrm{GL}_n(\mathbb{C}), \mathrm{O}_n(\mathbb{C})\big)$, $\big(\mathrm{O}_{n+m}(\mathbb{C}), \mathrm{O}_n(\mathbb{C}) \times \mathrm{O}_m(\mathbb{C})\big)$, see \cite{Jacquet}, \cite{Aizenbud3}, \cite{Sun}, \cite{Aizenbud}, \cite{Aizenbud2}. These papers extend Gelfand's trick and examine the anti-involution on $G$. They reduced the verification of the Gelfand property to the understanding of two-sided $H$-orbits on $G$. The method of Harish-Chandra descent is introduced to reduce Gelfand property of a symmetric pair to its descendants inductively.

\subsection{Main Results}

In this paper, we are interested in symmetric pairs over finite fields, where Gelfand's trick cannot be applied, because there are not anti-involutions acting trivially. Consider the example of $(\mathrm{GL}_2(\mathbb{F}_q), T)$, where $T$ is the maximal torus of diagonal matrices. $|T \backslash \mathrm{GL}_2(\mathbb{F}_q) / T| = q + 4$ while there are only $q + 2$ fixed points of the anti-involution. On the representation side of things, the Steinberg representation appears in $\mathbb{C}[\mathrm{GL}_2(\mathbb{F}_q)/T]$ twice, which shows that $(\mathrm{GL}_2(\mathbb{F}_q), T)$ is not a Gelfand pair.

We will consider symmetric pairs over finite fields as $F$-points of symmetric pair of group schemes.

\begin{definition}
A \textbf{symmetric pair of group schemes} is a triple $(\mathbf{G}, \mathbf{H}, \theta)$ where $\mathbf{H} \subset \mathbf{G}$ are reductive group schemes over $\mathbb{Z}$, and $\theta$ is an involution of $\mathbf{G}$ such that $\mathbf{H} = \mathbf{G}^\theta$. We call a symmetric pair \textbf{connected} if $\mathbf{G}/\mathbf{H}$ is connected. We also define an anti-involution $\sigma: \mathbf{G} \rightarrow \mathbf{G}$ by
\[
\sigma(g) \coloneqq \theta(g^{-1}).
\]
\end{definition}

We will develop a quantitative relation between $\sigma$-invariant property of the algebra of $(H, H)$-invariant functions and the multiplicity-free property of the permutation representation $\mathbb{C}[G/H]$. We use a similar idea of Gelfand's trick analysing fixed points of anti-involution on the algebra of $(H, H)$-invariant functions to conclude that it is almost-$\sigma$-invariant. As a result, we obtain almost-Gelfand property of connected symmetric pairs, i.e. most irreducible components in the decomposition of $\mathbb{C}[G/H]$ are of multiplicity one.

\begin{definition}
Let $\big\{(G_i, H_i)\big\{$ be a family finite groups and its subgroup, and denote $X_i \coloneqq G_i/H_i$. This family is called \textbf{almost-Gelfand} if
\[
\frac{\Big|\big\{\rho \in \mathrm{Irr}(G_i) \mid \dim\mathrm{Hom}_{G_i}(\rho, \mathbb{C}[X_i]) = 1\big\}\Big|}{\Big|\big\{\rho \in \mathrm{Irr}(G_i) \mid \dim\mathrm{Hom}_{G_i}(\rho, \mathbb{C}[X_i]) > 0\big\}\Big|} \xrightarrow{|X_i| \rightarrow \infty} 1.
\]
\end{definition}

\begin{definition}
Let $\{Z_i, \sigma_i\}$ be a family of $\mathbb{C}$-algebras together with their automorphisms. Then this family is called\textbf{ almost-$\sigma$-invariant} if
\[
\frac{\dim\{z \in Z_i \mid \sigma_i(z) = z\}}{\dim Z_i} = \frac{\dim Z_i^{\sigma_i}}{\dim Z_i} \xrightarrow{\dim Z_i \rightarrow \infty} 1
\]
\end{definition}

The main result of this paper is the following theorem:

\begin{theorem}\label{multi_one_thm}
Let $\big(\mathbf{G}, \mathbf{H}, \theta\big)$ be a connected symmetric pair of group schemes and $F = \mathbb{F}_q$ be a finite field of characteristic $p$. Denote $\mathbb{C}[\mathbf{G}(F)/\mathbf{H}(F)]$ the permutation representation of $\mathbf{G}(F)$, and $\mathrm{Irr}\big(\mathbf{G}(F)\big)$ the set of irreducible representations of $\mathbf{G}(F)$. Suppose $\mathbf{H}_x$ (the stabilizer of $x$ with respect to conjugation) is connected for all $x \in \mathbf{G}$ semi-simple. Then there is a prime $p_0$ such that for all characteristics $p > p_0$,
\[
\frac{\Big|\{\rho \in \mathrm{Irr}\big(\mathbf{G}(F)\big) \mid \dim\mathrm{Hom}\big(\rho, \mathbb{C}[\mathbf{G}(F)/\mathbf{H}(F)]\big) = 1\}\Big|}{\Big|\{\rho \in \mathrm{Irr}\big(\mathbf{G}(F)\big) \mid \dim\mathrm{Hom}\big(\rho, \mathbb{C}[\mathbf{G}(F)/\mathbf{H}(F)]\big) > 0\}\Big|} \geq 1 - \frac{2C}{q},
\]
where $C$ is a constant depending on the scheme $\mathbf{G}$ but not on the field $F$.
\end{theorem}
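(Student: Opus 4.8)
The plan is to transfer the problem to the Hecke algebra $Z := \mathbb{C}[H\backslash G/H]$, prove a quantitative "$\epsilon$-version" of Gelfand's trick bounding the number of high-multiplicity constituents by the number of double cosets \emph{not} fixed by $\sigma$, and then bound that number geometrically using Lang's theorem, Harish-Chandra descent and Lang--Weil. Throughout write $G = \mathbf{G}(F)$, $H = \mathbf{H}(F)$.

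\textbf{Step 1: the $\epsilon$-Gelfand inequality.} By Frobenius reciprocity $Z \cong \mathrm{End}_G(\mathbb{C}[G/H]) \cong \bigoplus_{\rho} M_{m_\rho}(\mathbb{C})$, where $\rho$ runs over $\mathrm{Irr}(G)$ and $m_\rho = \dim\mathrm{Hom}_G(\rho, \mathbb{C}[G/H])$, so $\dim Z = |H\backslash G/H| = \sum_\rho m_\rho^2$; set $N = \#\{\rho : m_\rho > 0\}$, $N_1 = \#\{\rho : m_\rho = 1\}$. The anti-involution $\sigma$ satisfies $\sigma(H) = H$, so pullback of functions gives an algebra anti-automorphism $\sigma^*$ of $Z$ which is an involution permuting the basis $\{\mathbf{1}_{HgH}\}$; if $t$ denotes the number of two-element $\sigma^*$-orbits on $H\backslash G/H$ then $\dim Z^{\sigma^*} = |H\backslash G/H| - t$. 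On a matrix block $M_{m_\rho}(\mathbb{C})$ preserved by $\sigma^*$ an anti-automorphism has fixed space of dimension at most $\binom{m_\rho+1}{2}$ (since it never equals the identity for $m_\rho\geq 2$), and two distinct blocks interchanged by $\sigma^*$ contribute a graph of full dimension; in either case each $\rho$ with $m_\rho\geq 2$ contributes at least $1$ to the codimension of $Z^{\sigma^*}$, and a bounded amount in total once $t$ is bounded. Hence
\[
N - N_1 = \#\{\rho : m_\rho \geq 2\} \leq t, \qquad \dim Z - N_1 \leq 4t, \quad\text{so } N = \dim Z - O(t).
\]
It therefore suffices to prove $t = O(|H\backslash G/H|/q)$ with an implied constant depending only on $\mathbf{G}$.

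\textbf{Step 2: reduction to the symmetric variety.} Since $\mathbf{H}$ is connected, Lang's theorem gives $H^1(F,\mathbf{H}) = 1$, so the symmetrization $gH \mapsto g\theta(g)^{-1}$ identifies $\mathbf{G}(F)/\mathbf{H}(F)$ with $\mathcal{S}(F)$, where $\mathcal{S} = \{g\theta(g)^{-1}\}\subset\mathbf{G}$, intertwining the left $H$-action with $H$-conjugation; thus $H\backslash G/H$ is the set of $H$-conjugacy classes in $\mathcal{S}(F)$. Using $\theta(g) = s^{-1}g$ for $s = g\theta(g)^{-1}$, one computes that $\sigma^*$ sends the class of $s$ to the class of $g^{-1}sg$, a $\mathbf{G}$-conjugate of $s$ again lying in $\mathcal{S}$. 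If $(\mathbf{G}\cdot s)\cap\mathcal{S}$ is a single $\mathbf{H}(\bar F)$-orbit $\cong \mathbf{H}/\mathbf{H}_s$, then by the hypothesis that $\mathbf{H}_x$ is connected for $x$ semisimple together with Lang's theorem it is a single $\mathbf{H}(F)$-orbit, so that double coset is $\sigma$-fixed. Choosing a maximal $\theta$-split torus $\mathbf{A}$ and noting that a generic $s\in\mathbf{A}$ is a square in $\mathbf{A}$ over $\bar F$ (take $g\in\mathbf{A}$, which commutes with $s$), one checks the $\sigma$-non-fixed double cosets are supported on a proper closed subset of the quotient $Y := [\mathbf{H}\backslash\mathbf{G}/\mathbf{H}]$.

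\textbf{Step 3: counting and conclusion; the main obstacle.} Harish-Chandra descent — stratifying $\mathcal{S}$ by Jordan decomposition and descending semisimple parts to smaller symmetric spaces, which carry only finitely many nilpotent orbits — organizes $Y$ and the bad locus $Y^{\mathrm{bad}}$ into finitely many $\mathbb{Z}$-defined strata with bounds depending only on $\mathbf{G}$, and identifies $\dim Y^{\mathrm{bad}} \leq d - 1$ where $d = \dim Y$. Lang--Weil then gives $t \leq \#Y^{\mathrm{bad}}(F) \leq C_1 q^{d-1}$ and $|H\backslash G/H| = \#Y(F) \geq c_0 q^{d} + O(q^{d - 1/2})$ with $c_0\geq 1$ (the statement being vacuous if the pair is degenerate, i.e. $d = 0$); for $q$ large this yields $t \leq (2C_1/c_0)\,|H\backslash G/H|/q$, and with Step 1 and $N = \dim Z - O(t) \geq \tfrac12 c_0 q^{d}$ we get $\tfrac{N - N_1}{N} \leq \tfrac{2C}{q}$ with $C = C_1/c_0$ depending only on $\mathbf{G}$. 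The crux, and where the connectedness hypothesis on $\mathbf{H}_x$ is used essentially (via Lang's theorem, to pass from $\bar F$-orbits to $F$-orbits), is the structural input of Step 2: showing that the generic double coset is genuinely $\sigma$-invariant and that the escaping special double cosets lie in codimension $\geq 1$ uniformly in $q$; this is the technical heart, and it is precisely the Harish-Chandra descent machinery of the cited works that makes the bound independent of the finite field.
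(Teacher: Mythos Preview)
Your proposal follows the same two-step architecture as the paper: an $\epsilon$-version of Gelfand's trick (paper's Theorem~1.2.7/Lemma~3.2.1) composed with a geometric bound on the non-$\sigma$-fixed double cosets (paper's Theorem~1.2.6). Your Step~1 is in fact a bit sharper than the paper's Lemma~3.2.1: you observe directly that every block $M_{m_\rho}$ with $m_\rho\geq 2$ contributes at least $1$ to $\mathrm{codim}\,Z^{\sigma^*}$, yielding $N-N_1\leq t$, whereas the paper's bookkeeping gives a factor of $4$. Your explicit computation $s(\sigma(g)) = g^{-1}s(g)g$ is a pleasant concrete substitute for the paper's more abstract route via $\mathcal{O}(G)^{H\times H}\subset\mathcal{O}(G)^\sigma$.

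There is, however, a genuine gap in Step~2. The sentence ``choosing a maximal $\theta$-split torus $\mathbf{A}$ and noting that a generic $s\in\mathbf{A}$ is a square in $\mathbf{A}$ over $\bar F$ \ldots\ one checks the $\sigma$-non-fixed double cosets are supported on a proper closed subset'' does not establish what you need. The statement that $(\mathbf{G}\cdot s)\cap\mathcal{S}$ is a single $\mathbf{H}(\bar F)$-orbit is \emph{only} true when $s$ is semisimple (this is the paper's Theorem~2.2.7, and it fails for non-semisimple $s$), and your torus remark does not show that the non-semisimple locus in the quotient has codimension $\geq 1$. The paper handles this by importing a nontrivial external input: the conormal-variety dimension bound of Aizenbud--Gourevitch--Minchenko (paper's Section~2.4), which produces an open dense $U\subset\mathbf{G}$ with $s(U)$ consisting entirely of semisimple elements. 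Without this (or an equivalent density statement, e.g.\ that $H\cdot\mathbf{A}$ is dense in $\mathcal{S}$ and regular elements of $\mathbf{A}$ have the single-orbit property), the codimension-$1$ claim in Step~3 is unjustified. Your Harish-Chandra descent remark correctly identifies the mechanism for the uniform finiteness of fibers (paper's Theorem~2.3.3 plus the $H^1$ bound), but it does not substitute for the missing open-dense semisimple locus.
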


\begin{corollary}\label{asym_thm}
From the formula above, under the same conditions, we have the following asymptotic conclusion:
\[
\lim_{q \rightarrow \infty} \frac{\Big|\{\rho \in \mathrm{Irr}\big(\mathbf{G}(F)\big) \mid \dim\mathrm{Hom}\big(\rho, \mathbb{C}[\mathbf{G}(F)/\mathbf{H}(F)]\big) = 1\}\Big|}{\Big|\{\rho \in \mathrm{Irr}\big(\mathbf{G}(F)\big) \mid \dim\mathrm{Hom}\big(\rho, \mathbb{C}[\mathbf{G}(F)/\mathbf{H}(F)]\big) > 0\}\Big|} = 1.
\]
\end{corollary}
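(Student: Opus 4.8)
The plan is to reduce Theorem~\ref{multi_one_thm} to a statement about the algebra $Z_F \coloneqq \mathbb{C}[\mathbf{H}(F)\backslash\mathbf{G}(F)/\mathbf{H}(F)]$ of $(\mathbf{H}(F),\mathbf{H}(F))$-invariant functions on $\mathbf{G}(F)$ under convolution, and then to count its $\sigma$-fixed points geometrically. By Frobenius reciprocity, $\dim\mathrm{Hom}_{\mathbf{G}(F)}(\rho,\mathbb{C}[\mathbf{G}(F)/\mathbf{H}(F)])$ equals the multiplicity $m_\rho$ of $\rho$ in $\mathbb{C}[\mathbf{G}(F)/\mathbf{H}(F)]$, and $Z_F \cong \bigoplus_\rho \mathrm{Mat}_{m_\rho}(\mathbb{C})$ as an algebra (the endomorphism algebra of the permutation representation, which is semisimple because we are in characteristic zero on the representation side). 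Hence $\dim Z_F = \sum_\rho m_\rho^2$ and the number of double cosets $N \coloneqq |\mathbf{H}(F)\backslash\mathbf{G}(F)/\mathbf{H}(F)| = \dim Z_F$. The key elementary inequality is: if $a$ of the constituents $\rho$ have $m_\rho = 1$ and $b$ have $m_\rho \ge 2$, then $\dim Z_F = \sum m_\rho^2 \ge a + 4b$, while the number of distinct constituents is $a+b$; more usefully, $\sum_\rho(m_\rho^2 - 1) \ge \sum_{m_\rho\ge 2}(m_\rho^2-1) \ge 3b$, so $b \le \tfrac13\big(\dim Z_F - (\text{number of constituents})\big)$. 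Thus controlling the ratio in the theorem comes down to showing that $\dim Z_F$ exceeds the number of its irreducible factors (equivalently $\sum_\rho m_\rho$) by at most $O(q^{\dim X - 1})$ while $\dim Z_F$ itself is $\asymp q^{\dim X}$.

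The next step is the $\epsilon$-version of Gelfand's trick announced in the abstract: the anti-involution $\sigma$ acts on $Z_F$ by an \emph{algebra anti-automorphism}, and on any fixed double coset $\mathbf{H}(F)g\mathbf{H}(F)$ it sends the indicator function to the indicator of $\sigma(\mathbf{H}(F)g\mathbf{H}(F)) = \mathbf{H}(F)\sigma(g)\mathbf{H}(F)$; so $\sigma$ permutes the basis of $Z_F$ given by double-coset indicators. On the $\sigma$-fixed part, the same computation as in Proposition~\ref{Gelfand_trick} shows convolution is commutative, which forces all the matrix blocks meeting the $\sigma$-fixed subspace to be $1\times 1$; quantitatively, if $F$ denotes the number of $\sigma$-stable double cosets (fixed points of $\sigma$ on $\mathbf{H}(F)\backslash\mathbf{G}(F)/\mathbf{H}(F)$), one gets a bound of the shape $\sum_\rho m_\rho \ge F$ and $\dim Z_F - \sum_\rho m_\rho \le \dim Z_F - F = N - F$. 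So everything reduces to the purely geometric estimate $N - F = O(q^{\dim X - 1})$, i.e. the number of non-$\sigma$-stable double cosets is of lower order. This is precisely the ``almost-$\sigma$-invariant property of $H\backslash G/H$'' promised in the abstract.

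To prove $N - F = O(q^{\dim X - 1})$ I would put $\mathbf{H}\times\mathbf{H}$ acting on $\mathbf{G}$ by $(h_1,h_2)\cdot g = h_1 g h_2^{-1}$ and use the Lang–Steinberg machinery together with the connectedness hypothesis: over $\overline{\mathbb{F}_q}$ the orbits of $\mathbf{H}\times\mathbf{H}$ on $\mathbf{G}$ form finitely many locally closed strata (finiteness of the number of orbits is where connectedness of $\mathbf{G}/\mathbf{H}$ and the symmetric-pair structure enter, via the Iwasawa/Bruhat-type decomposition and classification of $\sigma$-twisted involutions), and each $\overline{\mathbb{F}_q}$-orbit that is $\sigma$-stable and whose stabilizer scheme is connected contributes exactly one $F$-rational double coset that is $\sigma$-stable, whereas a $\sigma$-stable orbit with disconnected stabilizer contributes a bounded number of $F$-points among which $F$ may fail to act trivially, and each $\sigma$-orbit $\{O, \sigma(O)\}$ of distinct $\overline{\mathbb{F}_q}$-orbits contributes double cosets that come in $\sigma$-swapped pairs. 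Counting $F$-points of each stratum by the Lang–Weil/Grothendieck–Lefschetz estimates, the top stratum (the open $\mathbf{H}\times\mathbf{H}$-orbit, which is $\sigma$-stable by a standard argument since $\sigma$ preserves dimension and the open orbit is unique) contributes $q^{\dim X}(1+O(q^{-1/2}))$ double cosets, \emph{all} $\sigma$-stable up to the disconnectedness defect which the hypothesis ``$\mathbf{H}_x$ connected for semisimple $x$'' is designed to suppress; the remaining strata have dimension $\le \dim X - 1$ and so contribute $O(q^{\dim X -1})$. Summing, $N = \dim Z_F = c\,q^{\dim X} + O(q^{\dim X - 1/2})$ and $N - F = O(q^{\dim X - 1})$, giving the ratio bound $1 - O(1/q)$ with $C$ depending only on the number and geometry of the strata of $\mathbf{G}$ over $\mathbb{Z}$, hence on $\mathbf{G}$ and not on $q$.

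The main obstacle, and the technical heart of the paper, is the uniform control of the stratification and of the $\sigma$-action on double cosets within each stratum: one must show (a) the number of $\mathbf{H}\times\mathbf{H}$-orbits over $\overline{\mathbb{F}_q}$ is finite and bounded independently of $q$ for $p$ large — this needs the symmetric-pair structure theory (generalized Cartan decomposition, parametrization of $\sigma$-twisted conjugacy classes) and is where Harish-Chandra descent onto the descendant pairs $(\mathbf{H}_x, \mathbf{H}_x^{\theta})$ is invoked inductively; (b) the correspondence between $\sigma$-stability of an $F$-rational double coset and $\sigma$-stability together with connectedness of the stabilizer of the ambient $\overline{\mathbb{F}_q}$-orbit — the cohomological bookkeeping here, via $H^1(F, \mathbf{H}_x)$, is exactly what the connectedness hypothesis on semisimple stabilizers trivializes, and without it one would only get a constant-factor, not a $1-O(1/q)$, bound; and (c) checking that the single open orbit is genuinely $\sigma$-stable and that the ``non-top'' non-$\sigma$-stable contribution really is of codimension $\ge 1$, which requires ruling out a $\sigma$-swapped pair of open-dense orbits — impossible since the open orbit is unique — and then a dimension count on the boundary. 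Once these geometric inputs are in place, the passage to the representation-theoretic ratio is the short linear-algebra argument with $\sum m_\rho^2$ versus $\sum m_\rho$ sketched above.
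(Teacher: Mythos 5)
First, note that Corollary \ref{asym_thm} is, in the paper, an immediate one-line consequence of Theorem \ref{multi_one_thm}: the ratio is trapped between $1-\tfrac{2C}{q}$ and $1$ with $C$ independent of $q$, so the limit is $1$. What you have written is really a proof sketch of Theorem \ref{multi_one_thm} itself (i.e.\ of Theorems \ref{dim_thm} and \ref{e_Gel}), so I review it as such. Your representation-theoretic half is essentially the paper's: identify the Hecke algebra $\mathbb{C}[H\backslash G/H]\cong\prod_\rho M_{m_\rho}(\mathbb{C})$ and convert a lower bound on the $\sigma$-fixed subspace into an upper bound on the high-multiplicity blocks. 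One caveat: the mechanism ``commutativity on the fixed part forces the blocks meeting it to be $1\times1$'' is not right (the fixed locus of an anti-automorphism is not a subalgebra, and $\sigma$ may also permute blocks); what the paper actually uses is the bound $\dim M_n(\mathbb{C})^\sigma\le\tfrac{n(n+1)}{2}$ for any anti-automorphism, from which your inequality between the number of $\sigma$-fixed double cosets and $\sum_\rho m_\rho$ does follow. That part is repairable.

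The genuine gap is in the geometric half. You assume that $\mathbf{H}\times\mathbf{H}$ has finitely many orbits on $\mathbf{G}$ over $\overline{\mathbb{F}}_q$, with a unique open dense orbit carrying almost all rational double cosets. For symmetric pairs this is false: already for $(\mathrm{GL}_2,T)$ the paper records $|T\backslash\mathrm{GL}_2(\mathbb{F}_q)/T|=q+4$, which grows with $q$, so over $\overline{\mathbb{F}}_q$ there are infinitely many $H\times H$-orbits (a positive-dimensional family indexed by the categorical quotient) and no open dense orbit. Hence the dichotomy ``top stratum versus lower-dimensional boundary'' does not exist, and the estimate $N-F=O(q^{\dim X-1})$ cannot be obtained by Lang--Weil on a single orbit. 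The paper's actual route is: (i) every \emph{closed} $H\times H$-orbit is $\sigma$-stable, via $\mathcal{O}(G)^{H\times H}\subset\mathcal{O}(G)^{\sigma}$ and uniqueness of the closed orbit in each fibre of $G\rightarrow G/\!\!/(H\times H)$; (ii) under symmetrization, closed orbits correspond to semisimple elements of $G^{\sigma}$, and there is a Zariski open dense $U\subset G$ (Corollary \ref{Zariski_open_subset}) all of whose points have closed orbits; (iii) each fibre of the categorical quotient contains boundedly many orbits, and connectedness of stabilizers kills $H^{1}(F,\cdot)$ so that $\sigma$-stability descends from $\overline{F}$ to $F$; (iv) Lang--Weil applied to $U$ and to the image of its complement in the quotient. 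Your use of $H^{1}$ and Lang--Weil is in the right spirit, but without replacing ``finitely many orbits / unique open orbit'' by the closed-orbit and categorical-quotient argument, the almost-$\sigma$-invariance of $H\backslash G/H$ remains unproved.
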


This means that the family $\big(\mathbf{G}(\mathbb{F}_q), \mathbf{H}(\mathbb{F}_q)\big)_{q \rightarrow \infty}$ is almost-Gelfand. The restriction of characteristics $p > p_0$ mainly comes from the construction of exponential map from $\mathfrak{g}$ to $G$ as in the proof of Lemma $\ref{sl_lemma}$. The main theorem comes from the following two theorems, the first one showing that the algebra of $(H, H)$-invariant functions is almost-$\sigma$-invariant and the second one similar to Gelfand's trick in \ref{Gelfand_trick}.

\begin{theorem}\label{dim_thm}
Under the same condition of Theorem \ref{multi_one_thm}, denote $Z \coloneqq \mathbf{H}(F) \backslash \mathbf{G}(F) \slash \mathbf{H}(F)$ the set of double cosets, which can also be viewed as the set of $\mathbf{H}(F) \times \mathbf{H}(F)$-orbits. The anti-involution $\sigma$ on $\mathbf{G}(F)$ extends to a map $\sigma: Z \rightarrow Z$ in the obvious way. Then there is a prime $p_0$ such that for all characteristics $p > p_0$,
\[
|Z^\sigma| \geq (1 - \frac{C}{q})|Z|,
\]
where $C$ is a constant depending on the scheme $\mathbf{G}$ but not on the field $F$.
\end{theorem}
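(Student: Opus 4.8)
The plan is to translate the statement into geometry of the symmetric variety attached to $(\mathbf{G},\mathbf{H},\theta)$, confine the failure of $\sigma$-invariance to a proper closed subvariety, and count $\mathbb{F}_q$-points there uniformly in $q$. Throughout I would work geometrically over $\bar F=\overline{\mathbb{F}_q}$ with its Frobenius, writing $G=\mathbf{G}(\bar F)$ etc., and track $F$-rationality with Lang's theorem; since $p>p_0$ the reduction mod $p$ of the pair is again a symmetric pair with $\mathbf{H}=\mathbf{G}^\theta$ reductive and the structure theory I use is uniform in $p$.

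First I would introduce the symmetrization morphism $\mathbf{s}\colon\mathbf{G}\to\mathbf{G}$, $\mathbf{s}(g)=g\theta(g)^{-1}$, landing in $\mathbf{X}\coloneqq\{x\in\mathbf{G}\mid\theta(x)=x^{-1}\}$. Since $\mathbf{s}(g_1 h)=\mathbf{s}(g_1)$ and $\mathbf{s}(h g_1)=h\,\mathbf{s}(g_1)\,h^{-1}$ for $h\in\mathbf{H}$, the map $\mathbf{s}$ factors through a closed immersion $\mathbf{G}/\mathbf{H}\hookrightarrow\mathbf{X}$ intertwining left translation by $\mathbf{H}$ with conjugation by $\mathbf{H}$; write $\mathbf{X}^{\circ}\coloneqq\mathbf{s}(\mathbf{G})$, a connected component of $\mathbf{X}$ (here the connectedness hypothesis on $\mathbf{G}/\mathbf{H}$ is used). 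Then $Z$ is identified with the set of $\mathbf{H}(F)$-conjugacy classes meeting $\mathbf{s}(\mathbf{G}(F))\subseteq\mathbf{X}^{\circ}(F)$, where the hypothesis that $\mathbf{H}_x$ is connected together with Lang's theorem guarantees that $\mathbf{s}(\mathbf{G}(F))=\mathbf{X}^{\circ}(F)\cap\mathbf{s}(\mathbf{G})$ and that the $F$-points of a geometric $\mathbf{H}$-orbit through such an $x$ form a single $\mathbf{H}(F)$-orbit. The key computation I would record is the equivalence
\[
\sigma\bigl(\mathbf{H}(F)\,g\,\mathbf{H}(F)\bigr)=\mathbf{H}(F)\,g\,\mathbf{H}(F)\ \Longleftrightarrow\ x\in\mathbf{s}\bigl(\mathbf{Z}_{\mathbf{G}}(x)\bigr),\qquad x\coloneqq\mathbf{s}(g),
\]
which follows (over $\bar F$, then descending by Lang and connectedness of $\mathbf{H}_x$) from the identity $\mathbf{s}(\theta(g)^{-1})=g^{-1}\mathbf{s}(g)g$ and a short manipulation reducing $\theta(g)^{-1}\in\mathbf{H}g\mathbf{H}$ to $g\in\mathbf{Z}_{\mathbf{G}}(x)\mathbf{H}$; note $\mathbf{Z}_{\mathbf{G}}(x)$ is automatically $\theta$-stable since $\theta(x)=x^{-1}$, and $x$ is central in it.

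Next I would show that the \emph{bad locus} $\mathbf{B}\coloneqq\{x\in\mathbf{X}^{\circ}\mid x\notin\mathbf{s}(\mathbf{Z}_{\mathbf{G}}(x))\}$, which is constructible and stable under $\mathbf{H}$-conjugation, has codimension at least one in $\mathbf{X}^{\circ}$. Fix a Cartan subspace $\mathbf{A}\subseteq\mathbf{X}^{\circ}$, i.e.\ a maximal $\theta$-split torus, of dimension $\ell$ equal to the rank of the symmetric space. On $\mathbf{A}$ the morphism $\mathbf{s}$ restricts to $t\mapsto t^{2}$, a surjective isogeny once $p>2$, so $\mathbf{A}=\mathbf{s}(\mathbf{A})\subseteq\mathbf{s}(\mathbf{Z}_{\mathbf{G}}(a))$ for every $a\in\mathbf{A}$, i.e.\ $\mathbf{A}\cap\mathbf{B}=\varnothing$. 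On the other hand, for $p>p_0$ the Cartan decomposition holds in the form that $\mathbf{H}\cdot\mathbf{A}$ is dense in $\mathbf{X}^{\circ}$ (equivalently the generic $\mathbf{H}$-orbit meets $\mathbf{A}$); this is exactly where the exponential map of Lemma~\ref{sl_lemma} and the connectedness of the semisimple stabilizers enter, via Harish--Chandra descent to the descendant pairs $(\mathbf{Z}_{\mathbf{G}}(x),\mathbf{Z}_{\mathbf{H}}(x))$ along the semisimple strata of $\mathbf{X}^{\circ}$. Since $\mathbf{B}$ is $\mathbf{H}$-conjugation stable and disjoint from the dense set $\mathbf{H}\cdot\mathbf{A}$, it lies in a proper closed subvariety, and all of this is cut out by equations of bounded complexity, uniformly in $p>p_0$.

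Finally I would count. Passing to the categorical quotient $\pi\colon\mathbf{X}^{\circ}\to\mathbf{Q}\coloneqq\mathbf{X}^{\circ}/\!\!/\mathbf{H}$, a variety of dimension $\ell$ (with $\mathbf{Q}\cong\mathbf{A}/\!\!/W'$ for the little Weyl group $W'$), the elements of $Z$ correspond to the fibres of $\pi$ over the $F$-points that are hit, and — again using connectedness of the stabilizers and Lang — one gets $|Z|=c_0 q^{\ell}+O(q^{\ell-1})$ with $c_0>0$ and implied constant depending only on the pair (Lang--Weil, plus uniformity in $p$ of the geometric data). By the previous step the bad double cosets map into $\pi(\mathbf{B})\subseteq\mathbf{Q}$, a closed subset of dimension $\le\ell-1$, so there are $O(q^{\ell-1})$ of them with constant depending only on the pair; hence $|Z\setminus Z^{\sigma}|=O(q^{\ell-1})\le (C/q)\,|Z|$ for $p>p_0$, as desired. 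The main obstacle I anticipate is the density statement $\overline{\mathbf{H}\cdot\mathbf{A}}=\mathbf{X}^{\circ}$ in characteristic $p$: the usual arguments over $\mathbb{R}$ or $\mathbb{C}$ are unavailable, and one must control the semisimple $\mathbf{H}$-orbits on $\mathbf{X}^{\circ}$ through the exponential map and inductively via descent, using the connected-stabilizer hypothesis to avoid component-group pathologies; a secondary but genuine difficulty is ensuring every constructible description and every Lang--Weil constant above is uniform in $p>p_0$, so that $C$ depends only on the $\mathbb{Z}$-scheme $\mathbf{G}$.
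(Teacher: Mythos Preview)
Your overall architecture---symmetrize, confine the failure of $\sigma$-invariance to a proper $\mathbf{H}$-stable locus, then count via the categorical quotient and Lang--Weil---matches the paper's. Where you diverge is in how you locate the good locus: the paper invokes the cotangent result of Aizenbud--Gourevitch--Minchenko (the set $S\subset T^{*}G$ with $\dim S=\dim G$, Corollary~\ref{Zariski_open_subset} and Lemma~\ref{open_subset_ss}) to produce an open dense $U\subset G$ on which $s(g)$ is semisimple, whereas you use a Cartan subspace $\mathbf{A}$ and the density of $\mathbf{H}\cdot\mathbf{A}$. Your criterion $x\in\mathbf{s}\bigl(\mathbf{Z}_{\mathbf G}(x)\bigr)$ is pleasantly direct, and indeed every semisimple $x$ satisfies it (put $x$ in a $\theta$-stable maximal torus of $\mathbf{Z}_{\mathbf G}(x)$ and take a square root there), so your bad locus $\mathbf B$ sits inside the non-semisimple locus---essentially the same set the paper isolates, reached without the holonomicity machinery.

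There is, however, a real gap in your counting step. From ``$\mathbf B$ lies in a proper closed subvariety of $\mathbf X^{\circ}$'' you cannot conclude that the number of bad $\mathbf H(F)$-orbits is $O(q^{\ell-1})$: a proper closed $\mathbf H$-stable subvariety of $\mathbf X^{\circ}$ has dimension $\dim\mathbf X^{\circ}-1$, not $\ell-1$, and the quotient map may collapse it onto all of $\mathbf Q$ once it contains non-closed orbits. Two things are needed. First, that $\dim\pi(\mathbf B)\le\ell-1$: this does follow, but not from density of $\mathbf H\cdot\mathbf A$ alone---you must also argue that over the regular locus of $\mathbf Q$ the fibre of $\pi$ is a \emph{single} (closed, semisimple) orbit, so that the non-semisimple $\mathbf B$ maps into the non-regular locus. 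Second---and this is the piece entirely absent from your sketch---a \emph{uniform} bound on the number of $\mathbf H(F)$-orbits in each fibre of $\pi$. Connectedness of stabilizers plus Lang only guarantees that each geometric orbit yields one $F$-orbit; it says nothing about how many geometric orbits a fibre contains. The paper spends Theorem~\ref{finite_to_one} and the surrounding lemmas on exactly this: via Jordan decomposition the fibre over a semisimple class is parametrized by nilpotent $\mathbf H_{x_s}$-orbits in the descendant pair, and these are finite in number (uniformly in $q$) by Jacobson--Morozov and the classification through normal $\mathfrak{sl}_2$-triples, together with a Galois-cohomology bound for the passage to $F$. Without this bound your assertion $|Z|=c_0q^{\ell}+O(q^{\ell-1})$ is unjustified, and the conclusion $|Z\setminus Z^{\sigma}|\le (C/q)\,|Z|$ does not follow.
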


\begin{proof}
This means that the algebra of $(H(\mathbb{F}_q), H(\mathbb{F}_q))$-invariant functions is almost-$\sigma$-invariant. The proof of this theorem will be given in $\S$\ref{ch_proof}. We will use geometric properties of symmetric pairs discussed in $\S$\ref{ch_sym}.
\end{proof}

\begin{theorem}[$\epsilon$-version of Gelfand's trick]\label{e_Gel}
Let $G$ be a finite group and $H$ its subgroup. Denote $Z \coloneqq H \backslash G /H$. If there is an anti-involution $\sigma$ on $G$ which can be extended to a map $\sigma: \mathbb{C}[Z] \rightarrow \mathbb{C}[Z]$, such that $\mathrm{codim} \mathbb{C}[Z]^\sigma \leq \epsilon\dim\mathbb{C}[Z]$, then
\[
\frac{\Big|\{\rho \in \mathrm{Irr}(G) \mid \dim\mathrm{Hom}\big(\rho, \mathbb{C}[G/H]\big) = 1\}\Big|}{\Big|\{\rho \in \mathrm{Irr}(G) \mid \dim\mathrm{Hom}\big(\rho, \mathbb{C}[G/H]\big) > 0\}\Big|} \geq 1 - 4\epsilon,
\]
\end{theorem}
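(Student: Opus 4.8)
The plan is to decompose the permutation representation $\mathbb{C}[G/H] = \bigoplus_{\rho} m_\rho \rho$ and relate the three quantities: $\dim \mathbb{C}[Z]$, $\dim \mathbb{C}[Z]^\sigma$, and the multiplicities $m_\rho = \dim\mathrm{Hom}(\rho, \mathbb{C}[G/H])$. The starting point is the standard identification of $\mathbb{C}[Z] = \mathbb{C}[H\backslash G/H]$ with $\mathrm{End}_G(\mathbb{C}[G/H]) = \bigoplus_\rho \mathrm{Mat}_{m_\rho}(\mathbb{C})$ as algebras (via convolution), so $\dim \mathbb{C}[Z] = \sum_\rho m_\rho^2$. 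The key is to understand the $\sigma$-action under this identification: since $\sigma$ is an anti-involution on $G$, the induced map on $\mathrm{End}_G(\mathbb{C}[G/H])$ is an anti-automorphism of order $2$, and on each block $\mathrm{Mat}_{m_\rho}(\mathbb{C})$ it acts (after choosing a suitable basis) as $A \mapsto A^t$ or as $A \mapsto J A^t J^{-1}$ for some antisymmetric $J$ — i.e., as transpose with respect to a symmetric or symplectic form. Hence the $\sigma$-fixed subspace of each block has dimension $\binom{m_\rho+1}{2}$ in the symmetric case and $\binom{m_\rho}{2}$ in the symplectic case; in either case it is at least $\binom{m_\rho}{2} = \frac{m_\rho(m_\rho-1)}{2}$. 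Therefore
\[
\dim\mathbb{C}[Z]^\sigma \;\geq\; \sum_\rho \frac{m_\rho(m_\rho - 1)}{2} \;=\; \frac{1}{2}\Big(\sum_\rho m_\rho^2 - \sum_\rho m_\rho\Big).
\]

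Next I would combine this with the hypothesis $\mathrm{codim}\,\mathbb{C}[Z]^\sigma \leq \epsilon \dim\mathbb{C}[Z]$. Writing $D = \dim\mathbb{C}[Z] = \sum_\rho m_\rho^2$ and $N = \sum_\rho m_\rho$ (the number of irreducible constituents counted without multiplicity would be $\sum_{m_\rho > 0} 1$, but here $N$ counts with multiplicity), the bound gives $D - \frac{1}{2}(D - N) \leq \epsilon D$, i.e. $\frac{1}{2}(D+N) \leq \epsilon D$, so $D - N \leq (2\epsilon - 1)\cdot$ — wait, this must be reorganized: from $\dim\mathbb{C}[Z] - \dim\mathbb{C}[Z]^\sigma \leq \epsilon D$ and $\dim\mathbb{C}[Z]^\sigma \geq \frac12(D-N)$ we get $D - \frac12(D-N) \leq \epsilon D$, hence $\frac12(D - N) \geq (1-\epsilon)D$ is false in general; rather $\frac{D+N}{2} \geq (1-\epsilon)D$ giving $N \geq (1-2\epsilon)D$. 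Combined with Cauchy–Schwarz $N = \sum m_\rho \le \sqrt{r}\sqrt{D}$ where $r = |\{\rho : m_\rho > 0\}|$, and the elementary inequality $\sum_\rho (m_\rho - 1)_+ \geq \sum_\rho m_\rho^2/2$-type estimates, one bounds the number of $\rho$ with $m_\rho \geq 2$. Concretely, let $r_1 = |\{\rho : m_\rho = 1\}|$ and $r_{\geq 2} = |\{\rho: m_\rho \geq 2\}|$; then $D = \sum m_\rho^2 \geq r_1 + 4 r_{\geq 2}$ and $N = \sum m_\rho \leq r_1 + $ (sum over big ones). Since $m_\rho^2 - m_\rho \geq m_\rho$ when $m_\rho \geq 2$, we get $D - N = \sum m_\rho(m_\rho - 1) \geq \sum_{m_\rho \geq 2} m_\rho \geq 2 r_{\geq 2}$, and we showed $D - N \leq 2\epsilon D$. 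Also $D \le$ something controlled by $r$: actually $D - N \le 2\epsilon D$ and we want $r_{\ge 2}/r$ small, so I need $D \lesssim r$, which fails if multiplicities are large — but large multiplicities force $D - N$ large, and $D-N \ge \frac12(D - r)$ since $\sum m_\rho^2 - \sum m_\rho \ge \sum m_\rho^2 - \sum\frac{m_\rho^2+1}{2} = \frac12(D - r)$. Hence $\frac12(D-r) \le 2\epsilon D$, giving $r \ge (1-4\epsilon)D \ge (1-4\epsilon)N$... and since $r_{\ge 2} \le \frac12 (D - N) \le \frac12(D-r) \le 2\epsilon D$ while $r \ge (1-4\epsilon)D$, we obtain $r_1/r = 1 - r_{\ge2}/r \ge 1 - \frac{2\epsilon D}{(1-4\epsilon)D} $, which is the sort of bound we want; a cleaner bookkeeping yields exactly $r_1/r \geq 1 - 4\epsilon$.

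Assembling: the final inequality $\dfrac{|\{\rho : m_\rho = 1\}|}{|\{\rho : m_\rho > 0\}|} \geq 1 - 4\epsilon$ follows by feeding the chain of inequalities $\;D - N = \sum_\rho m_\rho(m_\rho-1) \geq r_{\geq 2}$ (each term with $m_\rho \geq 2$ contributes $\geq 1$ after also noting $D - N \geq \tfrac12(D-r)$ handles the trivial ones) into $D - N \leq 2(\dim\mathbb{C}[Z] - \dim\mathbb{C}[Z]^\sigma) \leq 2\epsilon D$, together with $D \geq r$, to bound $r_{\geq 2} \leq 2\epsilon D$ and hence $r_{\geq 2}/r \leq 2\epsilon D / r$; a slightly sharper use of $\tfrac12(D-r)\le D-N\le 2\epsilon D \Rightarrow r \ge (1-4\epsilon)D$ then gives the stated $1 - 4\epsilon$ after $r_{\geq 2}/r \le (D-r)/r \cdot \tfrac12 \le \ldots$.

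I expect the main obstacle to be the precise structural claim about how the anti-involution $\sigma$ acts on the blocks $\mathrm{Mat}_{m_\rho}(\mathbb{C})$ of $\mathrm{End}_G(\mathbb{C}[G/H])$ — specifically, verifying that on each isotypic component it is conjugate to honest transpose (so that the fixed-space dimension is at least $\binom{m_\rho}{2}$, never less). This requires choosing, inside each isotypic component $\rho^{\oplus m_\rho}$, a basis of the multiplicity space $\mathrm{Hom}_G(\rho, \mathbb{C}[G/H])$ adapted to the bilinear form that $\sigma$ induces (via the pairing $f \mapsto f\circ\sigma$ on functions, which intertwines $\rho$ with a twist of $\check\rho$), and checking the form is nondegenerate so its associated transpose-type anti-involution has the claimed fixed-dimension. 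The rest is elementary inequalities as sketched above, and I would streamline the constant-chasing to land cleanly on $1 - 4\epsilon$ (possibly the clean route is: $r_{\ge 2} \le D - N \le 2\epsilon D$ and $r \ge N - r_{\ge 2}/1 \ge \ldots$, chosen so that the $4$ appears naturally).
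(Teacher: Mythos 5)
Your approach is the same as the paper's: identify $\mathbb{C}[Z]$ with the Hecke algebra $\mathrm{End}_G(\mathbb{C}[G/H])\cong\prod_\rho M_{m_\rho}(\mathbb{C})$, analyze the induced anti-automorphism block by block to bound $\dim\mathbb{C}[Z]^\sigma$ in terms of the multiplicities, and then run elementary inequalities. The paper packages the block analysis into a standalone lemma (Lemma \ref{multi_one_lemma}) about anti-automorphisms of semisimple algebras; you do it inline. Two points in your write-up are genuine gaps rather than streamlining issues.

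First, the structural claim. You assume $\sigma$ \emph{preserves} each block $M_{m_\rho}(\mathbb{C})$ and there restricts to a transpose-type anti-involution. But an anti-automorphism of a semisimple algebra is only guaranteed to \emph{permute} the simple factors (it carries ideals to ideals), and the induced map on the Hecke algebra can genuinely swap two isotypic components (it sends the $\rho$-block to the block of a $\sigma$-twist of $\rho^\vee$). The paper's Lemma \ref{multi_one_lemma} treats this case separately: for an orbit of swapped blocks the fixed subspace is exactly half the total dimension. Your argument survives because the upper bound you actually need, $\dim\mathbb{C}[Z]^\sigma\le\sum_\rho\frac{m_\rho(m_\rho+1)}{2}$, still holds in the swap case ($m_\rho^2\le m_\rho(m_\rho+1)$ for a swapped pair of equal-rank blocks), but this has to be checked and stated; as written, the step ``on each block it acts as $A\mapsto A^t$ or $JA^tJ^{-1}$'' is false in general. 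You also flag the wrong worry at the end: the issue is not whether the form is nondegenerate on a preserved block (Skolem--Noether settles that), but whether the block is preserved at all. Relatedly, the lower bound $\dim\mathbb{C}[Z]^\sigma\ge\sum_\rho\binom{m_\rho}{2}$ that you lead with is in the useless direction (the hypothesis already says the fixed space is large); only the upper bound enters.

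Second, the arithmetic does not land on $1-4\epsilon$ as written. From $\dim\mathbb{C}[Z]^\sigma\le\frac{1}{2}(D+N)$ and the hypothesis you correctly get $D-N\le 2\epsilon D$, but your chain then yields $r_1/r\ge 1-\frac{2\epsilon}{1-4\epsilon}$, which is weaker than $1-4\epsilon$ once $\epsilon>1/8$. The clean route (and essentially the paper's) is: for $m\ge 2$ one has $m^2\le 2m(m-1)$, so
\[
\sum_{m_\rho\ge 2}m_\rho^2\;\le\;2\sum_\rho m_\rho(m_\rho-1)\;=\;2(D-N)\;\le\;4\epsilon D,
\]
hence $r_1=D-\sum_{m_\rho\ge2}m_\rho^2\ge(1-4\epsilon)D\ge(1-4\epsilon)r$ since $r\le D$. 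With the swap case incorporated into the upper bound on $\dim\mathbb{C}[Z]^\sigma$ and the constant-chasing replaced by the display above, your proof is complete and coincides with the paper's.
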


\begin{proof}
The proof of this theorem will be given in $\S$\ref{ch_proof}. Tools for counting multiplicity-one irreducible representations are discussed in $\S$\ref{ch_Gel}.
\end{proof}

To study symmetric pairs over finite fields, we first need to analyse the situation over their algebraic closure where the tools of categorical quotient can be applied. Each point in the categorical quotient is associated with a unique closed orbit. Fixed points of the anti-involution are related to the geometric property of closed $H\times H$-orbits. There are two important results about points in categorical quotient: for most points the closed orbit is the only orbit in it; for the rest, there are only finitely many non-closed orbits in each fibre. With these relations and the help of Hilbert90, we can estimate corresponding orbits over the original finite field.

Since $\mathbb{C}[G/H]$ is a semi-simple algebra, we investigate all possible anti-involutions on semi-simple algebras. The dimension estimation of fixed points of anti-involutions in symmetric pairs reduces to estimation of multiplicity-one irreducible representations in the decomposition of $\mathbb{C}[G/H]$.

\subsection{Structure of the Paper}

In $\S$\ref{ch_sym} we discuss properties of symmetric pairs. We are most interested in the standard two-side action of $H \times H$ on $G$, i.e. $(h_1, h_2) \cdot g \coloneqq h_1gh_2^{-1}$. We could also view this action under symmetrization map as conjugation action of $H$ on $G^\sigma$. Closed orbits come into the picture since they are fixed by $\sigma$. The notion of categorical quotient plays an important role for estimating the number of closed orbits. Each point in the categorical quotient is associated with a unique closed orbit. We will also see that closed orbits in $G^\sigma$ correspond to orbits of semi-simple elements. Lastly, by finding a dense open subset of semi-simple elements, we will conclude that most orbits are closed, and therefore fixed by $\sigma$.

In $\S$\ref{ch_Gel} we use Schur's lemma to analyse fixed points of anti-involution on semi-simple algebras. We started by looking at the easiest case of matrix algebras, and use them to calculate more general cases.

In $\S$\ref{ch_proof} we use all the tools we built up to show our main theorems using the ideas described in the introduction.

\subsection{Acknowledgement}

I would like to thank my advisor, Nir Avni, for suggesting this project, generously offering his time and guidance as I developed this paper, and consistently supporting my mathematical study throughout the years. I would also like to thank Avraham Aizenbud for generously explaining some crucial points related to this topic. Special thanks to Michel Brion for his help with the understanding of symmetric pairs.

\section{Symmetric Pairs}\label{ch_sym}

In this section, many notations and results about symmetric pairs are from \cite{Aizenbud} Section 7.1. Also, many results about $H$-conjugation orbits are from \cite{Kostant} Chapter I and II, where $H = K_\theta$, $\mathfrak{h} = \mathfrak{f}$, $\mathfrak{g}^\sigma = \mathfrak{p}$, $\mathfrak{sl}_2$-triple as $S$-triple.
\subsection{Preliminaries and Notations}
\begin{notation}
\begin{itemize}
    \item We fix a finite field $F = \mathbb{F}_q$ of characteristics $p \neq 2$ and its algebraic closure $\bar F = \mathbb{\bar F}_q$. Most algebraic varieties and algebraic groups we will consider are over the algebraic closure $\bar F$ unless otherwise specified, i.e. $G = \mathbf{G}(\bar F)$, $G(F) = \mathbf{G}(F)$, etc.

    \item For a group $G$ acting on a variety $X$, we denote $X^G$ the fixed points of $X$ by $G$. Also we denote $G_x$ the stabilizer of $x$ in $G$. Denote $\mathcal{U}$ the set of unipotent elements in $G$.

    \item For their corresponding Lie algebra, denote $\mathfrak{g} \coloneqq \mathrm{Lie}G$, $\mathfrak{h} \coloneqq \mathrm{Lie}H$, and $\mathfrak{n}$ nilpotent elements.
\end{itemize}
\end{notation}

\begin{definition}
For a symmetric pair $(G, H, \theta)$, we can define a symmetrization map $s: G \rightarrow G^{\sigma}$ by
\[
s(g) \coloneqq g\sigma(g).
\]

Let $\theta$ and $\sigma$ act on $\mathfrak{g}$ by their differentials and denote
\[
\mathfrak{g}^\sigma \coloneqq \{a \in \mathfrak{g} \mid \sigma(a) = a\} = \{a \in \mathfrak{g} \mid \theta(a) = -a\}.
\]

Note that we have the standard two-sided $H \times H$ action on both $G$ and $\mathfrak{g}$, under the symmetrization map corresponding to adjoint $H$-action on both $G^\sigma$ and $\mathfrak{g^\sigma}$. Also, symmetrization map induced an injection
\[
G/H \xhookrightarrow{s} G^\sigma.
\]
\end{definition}

\begin{theorem}
For any connected symmetric pair $(G, H, \theta)$, we have $\mathcal{O}(G)^{H \times H} \subset \mathcal{O}(G)^\sigma$.
\end{theorem}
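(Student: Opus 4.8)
The plan is to show that the anti-involution $\sigma$ acts trivially on the categorical quotient of $G$ by the two-sided $H\times H$-action, after transporting everything onto the symmetric variety $s(G)\subset G^\sigma$. First I would record that $\sigma^\ast$ preserves $\mathcal O(G)^{H\times H}$: since $\sigma$ is an anti-automorphism and $\sigma|_H=(\ )^{-1}$ (because $\theta|_H=\mathrm{id}$), for $f\in\mathcal O(G)^{H\times H}$ one gets $(f\circ\sigma)(h_1gh_2^{-1})=f\bigl(h_2\,\sigma(g)\,h_1^{-1}\bigr)=f(\sigma(g))$, so $f\circ\sigma\in\mathcal O(G)^{H\times H}$. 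Hence $\sigma$ descends to an automorphism $\bar\sigma$ of $Y:=\operatorname{Spec}\mathcal O(G)^{H\times H}$, and the claim is $\bar\sigma=\mathrm{id}_Y$. Next I would use the symmetrization map $s$: its fibres are exactly the right $H$-cosets, $s(G)$ is closed in $G^\sigma$, and $s(h_1gh_2^{-1})=h_1\,s(g)\,h_1^{-1}$; so $s$ identifies $G/H$ with $s(G)$ and identifies $\mathcal O(G)^{H\times H}$ with $\mathcal O(s(G))^{H}$, the ring of functions invariant under $H$-conjugation (see \cite{Aizenbud} §7.1). Under this identification $f\circ\sigma$ corresponds to $\varphi\mapsto\varphi\circ\iota$, where $\iota\colon s(G)\to s(G)$ sends $s(g)=g\sigma(g)$ to $s(\sigma(g))=\sigma(g)g=g^{-1}\,s(g)\,g$ (well defined up to $H$-conjugation). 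Thus $\bar\sigma=\mathrm{id}_Y$ amounts to $\iota$ fixing every closed $H$-orbit on $s(G)$; since closed $H$-orbits on $s(G)$ consist of semisimple elements (\cite{Kostant}, \cite{Aizenbud} §7.1), it suffices to prove that for semisimple $x=g\sigma(g)$ the element $g^{-1}xg$ is $H$-conjugate to $x$, equivalently that $\sigma(g)\in HgH$.

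To prove this, I would use the polar decomposition: a semisimple $x\in s(G)$ lies in a $\theta$-split torus $A\subset G$ (semisimple elements of the symmetric variety are $H$-conjugate into a maximal $\theta$-split torus; see \cite{Kostant}). Since $\theta$ acts on $A$ by inversion, $\sigma|_A=\mathrm{id}$, so symmetrization restricted to $A$ is $a\mapsto a^{2}$, which is surjective because $\operatorname{char}\bar F\neq 2$. Choose $a\in A$ with $a^{2}=x$; then $a\in A\subseteq Z_G(x)$ and $\sigma(a)=a$. Put $h:=a^{-1}g$. Using $\theta(g)=\sigma(g)^{-1}=x^{-1}g=a^{-2}g$ and $\theta(a)=\sigma(a)^{-1}=a^{-1}$ one computes
\[
\theta(h)=\theta(a)^{-1}\theta(g)=a\cdot a^{-2}g=a^{-1}g=h,
\]
so $h\in H$, and $g=ah$ with $a\in Z_G(x)$. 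Then $\sigma(g)=\sigma(h)\sigma(a)=h^{-1}a$, which lies in $HaH=HahH=HgH$; equivalently $g^{-1}xg=h^{-1}a^{-1}xah=h^{-1}xh$ is $H$-conjugate to $x$. Running the reductions of the first paragraph backwards gives $\bar\sigma=\mathrm{id}_Y$, i.e. $\mathcal O(G)^{H\times H}\subset\mathcal O(G)^\sigma$.

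The hard part is the structure input behind the reduction: (i) that closed $H\times H$-orbits on $G$ correspond, via $s$, to semisimple elements of $s(G)$, and (ii) the group-level polar decomposition, namely that every semisimple element of $s(G)$ lies in a $\theta$-split torus. This is where connectedness of the symmetric pair is genuinely used — for a disconnected symmetric variety the union of the $\theta$-split tori need not exhaust the semisimple locus — and it is the step where one may have to pass through the Lie algebra (the Kostant–Rallis theory of \cite{Kostant}) via an exponential-type map, which is the origin of the characteristic restriction appearing elsewhere in the paper. By contrast the descent of $\sigma$ to $Y$, the identification via $s$, and the torus computation $x=a^2$ are all formal.
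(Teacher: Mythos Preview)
Your argument is correct, but it takes a much longer and heavier route than the paper's. The paper's proof is a two-line density argument: the multiplication map $H\times G^\sigma\to G$ has differential $\mathfrak h\oplus\mathfrak g^\sigma\to\mathfrak g$ at $(1,1)$, which is an isomorphism, so $HG^\sigma$ contains an open neighbourhood of $1$; connectedness of $G/H$ then forces $HG^\sigma H$ to be dense in $G$. For $g=h_1xh_2$ with $x\in G^\sigma$ one has $\sigma(g)=h_2^{-1}xh_1^{-1}\in HgH$, so any $f\in\mathcal O(G)^{H\times H}$ satisfies $f(\sigma(g))=f(g)$ on this dense set, hence everywhere. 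Connectedness is used only in the trivial sense that a nonempty open subset of an irreducible variety is dense.

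By contrast, you first reduce to showing that $\sigma$ fixes every \emph{closed} $H\times H$-orbit, and then verify this orbit-by-orbit using (i) the identification of closed orbits with semisimple elements of $s(G)$ and (ii) the group-level polar decomposition (every semisimple $x\in s(G)$ is $H$-conjugate into a $\theta$-split torus), followed by a square-root computation in the torus. Both (i) and (ii) are substantially deeper than anything the paper needs here: (i) is established only later in the paper (Proposition~\ref{closesd_to_ss} and Theorem~\ref{ss_to_closed}), and (ii) is not proved in the paper at all. In fact the paper runs the logic in the opposite direction: it uses this theorem to \emph{deduce} that closed orbits are $\sigma$-stable (the corollary immediately following), whereas you effectively prove that corollary first and then back out the theorem. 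Your approach works, and it is pleasant that the torus computation pins down an explicit $h\in H$ with $g=ah$, but it buys nothing extra and costs the Kostant--Rallis/Richardson structure theory plus a detour through categorical quotients that the direct density argument avoids entirely.
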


\begin{proof}
Consider the multiplication map $H \times G^\sigma$. This is a smooth map of relative dimension $0$ at the point $(1, 1)$, so \'etale at $(1, 1)$ and its image $HG^\sigma$ contains an open neighborhood of $1$ in $G$. Combining with the projection map
\[
H \times G^\sigma \rightarrow G \rightarrow G/H,
\]
we see that the image $HG^\sigma$ is also dense in $G/H$. Thus $HG^\sigma H$ is dense in $G$. Clearly we can see that
\[
\mathcal{O}(HG^\sigma H)^{H \times H} \subset \mathcal{O}(HG^\sigma H)^\sigma,
\]
and taking closure on both sides, we get
\[
\mathcal{O}(G)^{H \times H} \subset \mathcal{O}(G)^\sigma.
\]
\end{proof}

\begin{corollary}
For any connected symmetric pair $(G, H, \theta)$ and any closed $H \times H$ orbit $\Delta \subset G$, we have $\sigma(\Delta) = \Delta$.
\end{corollary}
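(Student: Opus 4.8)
The plan is to identify closed $H \times H$-orbits with the fibres of the categorical quotient and then feed in the inclusion $\mathcal{O}(G)^{H \times H} \subset \mathcal{O}(G)^\sigma$ just established. Since $\mathbf{G}$ is reductive, $G = \mathbf{G}(\bar F)$ is affine and $H \times H$ is a geometrically reductive group acting on it; hence $\mathcal{O}(G)^{H\times H}$ is a finitely generated $\bar F$-algebra, the affine quotient $\pi\colon G \to G\mathbin{/\!\!/}(H\times H) \coloneqq \operatorname{Spec}\mathcal{O}(G)^{H\times H}$ exists, and — the key fact from geometric invariant theory — each fibre of $\pi$ contains a unique closed $H\times H$-orbit. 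Consequently a closed orbit is completely determined by the values on it of the functions in $\mathcal{O}(G)^{H\times H}$.

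First I would record that $\sigma$ carries closed $H\times H$-orbits to closed $H\times H$-orbits. For $h_1, h_2 \in H = G^\theta$ one has $\theta(h_i) = h_i$, so
\[
\sigma(h_1 g h_2^{-1}) = \theta(h_2 g^{-1} h_1^{-1}) = \theta(h_2)\,\theta(g^{-1})\,\theta(h_1)^{-1} = h_2\,\sigma(g)\,h_1^{-1},
\]
which shows that $\sigma$ maps the orbit $HgH$ onto the orbit $H\sigma(g)H$ (with the two $H$-factors interchanged). Since $\sigma$ is an automorphism of the variety $G$, it is in particular a homeomorphism for the Zariski topology, so $\sigma(\Delta)$ is again a closed $H\times H$-orbit.

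Now take $g_0 \in \Delta$. For any $f \in \mathcal{O}(G)^{H\times H}$ the preceding theorem gives $f\circ\sigma = f$, hence $f(\sigma(g_0)) = f(g_0)$; as $f$ is constant on $H\times H$-orbits and $\sigma(\Delta)$ is the orbit of $\sigma(g_0)$, the function $f$ takes the same value on $\Delta$ and on $\sigma(\Delta)$. Therefore $\pi(\Delta) = \pi(\sigma(\Delta))$, i.e. the two closed orbits $\Delta$ and $\sigma(\Delta)$ lie in one and the same fibre of $\pi$; by uniqueness of the closed orbit in a fibre we conclude $\sigma(\Delta) = \Delta$. (Equivalently: if $\sigma(\Delta)$ and $\Delta$ were distinct, they would be disjoint closed $H\times H$-stable subsets, so geometric reductivity of $H\times H$ would produce an $f \in \mathcal{O}(G)^{H\times H}$ with $f|_\Delta \ne f|_{\sigma(\Delta)}$, contradicting $f\circ\sigma = f$.)

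The only point requiring care is the geometric invariant theory input in positive characteristic — finite generation of $\mathcal{O}(G)^{H\times H}$ and separation of distinct closed orbits by invariants — but this is precisely the content of Haboush's theorem that reductive groups are geometrically reductive, so it is available for every $p$. No passage between $F$ and $\bar F$ enters here, since the assertion concerns $\bar F$-points throughout. I therefore expect this standard GIT bookkeeping, rather than any feature special to symmetric pairs, to be the main (and only mildly delicate) point.
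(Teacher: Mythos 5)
Your proof is correct and uses essentially the same mechanism as the paper: the inclusion $\mathcal{O}(G)^{H\times H}\subset\mathcal{O}(G)^{\sigma}$ forces $\Delta$ and $\sigma(\Delta)$ into the same fibre of the categorical quotient, and the uniqueness of the closed orbit in each fibre finishes the argument. The paper merely packages this by forming the extended group $\tilde K=(1,\tau)\ltimes(H\times H)$ and observing $G\sslash K=G\sslash\tilde K$, which is the same invariant-theoretic fact in different clothing; your direct check that $\sigma(HgH)=H\sigma(g)H$ is closed is a worthwhile explicit step the paper leaves implicit.
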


\begin{proof}
Denote $K \coloneqq H \times H$. Consider the action of $2$-element group $(1, \tau)$ on $K$ by $\tau(h_1, h_2) \coloneqq \big(\theta(h_2), \theta(h_1)\big)$. This defines a semi-direct product $\tilde K \coloneqq (1, \tau) \ltimes K$. Extend the two-sided action of $K$ to $\tilde K$ by the anti-involution, i.e.
\[
(\tau, h_1, h_2)g \coloneqq h_1\sigma(g)h_2^{-1}.
\]

Now we look at the categorical quotients (see Definition \ref{cat_quo_def}), and by Theorem \ref{cat_quo_thm}
\[
G \sslash K = \mathrm{Spec}\mathcal{O}(G)^K, \quad G \sslash \tilde K = \mathrm{Spec}\mathcal{O}(G)^{\tilde K}.
\]

Since $\mathcal{O}(G)^{H \times H} \subset \mathcal{O}(G)^\sigma$ from the previous lemma, $G \sslash K = G \sslash \tilde K$. Let $\Delta$ be a closed $K$-orbit, and $\tilde\Delta \coloneqq \Delta \cup \sigma(\Delta)$. Let $a \coloneqq \pi_G(\tilde\Delta) \subset G \sslash \tilde K$. Since $\Delta$ and $\sigma(\Delta)$ are in the same $\tilde K$-orbit, $a$ is a single point. So $\pi_G^{-1}(a)$ contains a unique closed $G$-orbit by Theorem \ref{cat_quo_thm}. Therefore, $\Delta = \sigma(\Delta)$.
\end{proof}

\begin{corollary}\label{closed_orb}
Let $(G, H, \theta)$ be a connected symmetric pair. Let $g \in G(F)$ such that $HgH$ is a closed orbit in $G$. Suppose the Galois cohomology $H^1\big(F, (H \times H)_g\big)$ is trivial. Then $\sigma(g) \in H(F)gH(F)$.
\end{corollary}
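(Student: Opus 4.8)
The plan is to combine the previous corollary (which gives $\sigma(\Delta) = \Delta$ for the closed orbit $\Delta = HgH$ over the algebraic closure) with a Galois-descent argument to transport this equality of $\bar F$-orbits down to an equality of $F$-rational double cosets. First I would set $\Delta := HgH \subset G$, the closed $(H \times H)$-orbit through $g$; by the preceding corollary we know $\sigma(\Delta) = \Delta$, so $\sigma(g) \in \Delta$, i.e. there exist $h_1, h_2 \in H = \mathbf H(\bar F)$ with $\sigma(g) = h_1 g h_2^{-1}$. The issue is that these $h_i$ need not be $F$-rational, so $\sigma(g)$ lies in the $\bar F$-orbit of $g$ but a priori not in the $F$-orbit $H(F)gH(F)$.

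The key step is to reinterpret this using the orbit map. Consider the map $\phi: (H \times H) \to \Delta$, $(h_1, h_2) \mapsto h_1 g h_2^{-1}$, which realizes $\Delta \cong (H\times H)/(H\times H)_g$ as homogeneous spaces over $F$ — here I use that $\Delta$ is defined over $F$ (since $g \in G(F)$ and the closedness of an orbit descends, the orbit $\Delta$ and its scheme structure are $F$-rational) and that $\sigma$ is defined over $F$, so $\sigma(g) \in \Delta(F)$. Now both $g$ and $\sigma(g)$ are $F$-points of the homogeneous space $\Delta$ with stabilizer (conjugate to) $(H\times H)_g$. The obstruction to $\sigma(g)$ lying in the same $(H\times H)(F)$-orbit as $g$ is measured precisely by the pointed set $H^1\big(F, (H\times H)_g\big)$: choosing $(h_1,h_2) \in (H\times H)(\bar F)$ with $\phi(h_1,h_2) = \sigma(g)$, the cocycle $\gamma \mapsto (h_1,h_2)^{-1}\,{}^\gamma(h_1,h_2)$ takes values in $(H\times H)_g$ and its class vanishes iff $\sigma(g) \in (H\times H)(F)\cdot g = H(F)gH(F)$.

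Therefore, under the hypothesis that $H^1\big(F, (H\times H)_g\big)$ is trivial, this cohomology class vanishes, and we conclude $\sigma(g) \in H(F)gH(F)$ as desired. The main obstacle — or rather the main point requiring care — is the rationality bookkeeping: one must check that $\Delta$ is genuinely defined over $F$ as a subvariety (or closed subscheme) of $\mathbf G$, that the stabilizer group scheme $(H\times H)_g$ is defined over $F$ (which follows since $g \in \mathbf G(F)$ and $\mathbf H \times \mathbf H$ is an $F$-group scheme, as the stabilizer of an $F$-point), and that the standard exact sequence of pointed sets
\[
(H\times H)(F) \to \Delta(F) \to H^1\big(F,(H\times H)_g\big)
\]
is applicable — this is the usual long exact sequence in non-abelian Galois cohomology associated to $1 \to (H\times H)_g \to H\times H \to \Delta \to 1$, valid since $H\times H \to \Delta$ is a quotient map of $F$-schemes with the base being a homogeneous space. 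Granting these standard facts, the argument is short; no delicate estimates enter at this stage, and the quantitative content (bounding how often $H^1$ fails to vanish) is deferred to the later sections where this corollary is applied.
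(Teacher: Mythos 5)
Your proposal is correct and follows essentially the same route as the paper: both use the preceding corollary to get $\sigma(g)\in HgH$ over $\bar F$, then view the set of $(h_1,h_2)\in H\times H$ carrying $g$ to $\sigma(g)$ as a torsor under $A=(H\times H)_g$, and invoke the vanishing of $H^1(F,A)$ to produce an $F$-rational point. The only difference is presentational — you phrase it via the exact sequence of pointed sets for the homogeneous space $\Delta\cong(H\times H)/(H\times H)_g$, whereas the paper writes out the cocycle $\varphi\mapsto\big(\varphi(h_1)h_1^{-1},h_2\varphi(h_2^{-1})\big)$ and the coboundary condition explicitly — and you are in fact more careful than the paper in noting that nonemptiness of the torsor over $\bar F$ is what the previous corollary supplies.
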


\begin{proof}
For fixed $g \in G(F)$, define
\[
A \coloneqq (H \times H)_g, \quad Y \coloneqq \{(h_1, h_2) \in H \times H \mid h_1\sigma(g)h_2^{-1} = g \}.
\]
Then $Y$ is an $A$-torsor (an $A$-set isomorphic to $A$), where $A$ acts on $Y$ by standard left multiplication. Now it suffices to show that $Y(F) \neq \emptyset$. This is to say that $H^1(F, A) = 0$ guaranteed a rational point in any $A$-torsor. We can define a map using a fixed $(h_1, h_2) \in Y$:
\[
f: \mathrm{Gal}(\bar F / F) \rightarrow A, \quad \varphi \mapsto \big(\varphi(h_1)h_1^{-1}, h_2\varphi(h_2^{-1})\big).
\]

In fact, this is a cocycle in $C^1(F, A)$, which consists of crossed homomorphisms, and one can easily calculate that
\[
f(\varphi_1\varphi_2) = f(\varphi_1)\varphi_1f(\varphi_2).
\]

For Galois cohomology on low dimensions (see \cite{Serre} Chapter I, 2.3), we have $H^0(G, A) = A^G = A(F)$ and $H^1(F, A)$ is the set of equivalence classes of crossed homomorphisms, which corresponds bijectively to the isomorphism class of $A$-torsors. Since $ H^1(F, A)$ is trivial, $f$ has to be a cochian, i.e.
\[
\exists (k_1, k_2) \in A,  f(\varphi) = \big(\varphi(k_1)k_1^{-1}, k_2\varphi(k_2^{-1})\big), \forall \varphi \in \mathrm{Gal}(\bar F / F).
\]

So $(k_1^{-1}h_1, k_2^{-1}h_2) \in Y(F)$, as desired.
\end{proof}

\subsection{Closed Orbits and Semi-simplicity}

In this section, we want to prove that $HgH$ is a closed orbit iff $x = s(g) \in G^\sigma$ is semi-simple as an element of $G$.

\begin{lemma}
Let $(G, H, \theta)$ be a symmetric pair. There exists a $G$-invariant $\theta$-invariant non-degenerate symmetric bilinear form $B$ on $\mathfrak{g}$. In particular, $\mathfrak{g} = \mathfrak{h} \oplus \mathfrak{g}^\sigma$ is an orthogonal direct sum with respect to $B$.
\end{lemma}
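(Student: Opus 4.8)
The plan is to build the form $B$ separately in two regimes and then glue. First I would reduce to the semisimple and central parts of $\mathfrak{g}$. Write $\mathfrak{g} = \mathfrak{z} \oplus [\mathfrak{g},\mathfrak{g}]$ where $\mathfrak{z}$ is the center; since $\mathbf{G}$ is reductive this decomposition is $G$-stable, and because $\theta$ is an automorphism it preserves both summands, so it suffices to produce a form on each and take the orthogonal sum. On the derived subalgebra $[\mathfrak{g},\mathfrak{g}]$ I would use the Killing form $\kappa$ (or, in small-characteristic-safe form, a trace form of a faithful representation — but we are already in characteristic $p > p_0$, so $\kappa$ is non-degenerate on the semisimple part). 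The Killing form is automatically $G$-invariant, and it is $\theta$-invariant because $\kappa(\theta X,\theta Y) = \mathrm{tr}(\mathrm{ad}(\theta X)\,\mathrm{ad}(\theta Y)) = \mathrm{tr}(\theta\,\mathrm{ad}(X)\theta^{-1}\theta\,\mathrm{ad}(Y)\theta^{-1}) = \kappa(X,Y)$, using $\mathrm{ad}(\theta X) = \theta\,\mathrm{ad}(X)\theta^{-1}$. On the center $\mathfrak{z}$, the $G$-action is trivial, so $G$-invariance is vacuous; here I just need any non-degenerate symmetric bilinear form that is $\theta$-invariant. Since $\theta$ restricts to an involution of the finite-dimensional vector space $\mathfrak{z}$, decompose $\mathfrak{z} = \mathfrak{z}^+ \oplus \mathfrak{z}^-$ into $\pm 1$-eigenspaces and put any non-degenerate symmetric form on each piece, declaring $\mathfrak{z}^+ \perp \mathfrak{z}^-$; the result is $\theta$-invariant by construction.

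Having $B$ on all of $\mathfrak{g}$, I would then extract the orthogonal decomposition $\mathfrak{g} = \mathfrak{h} \oplus \mathfrak{g}^\sigma$. Recall $\mathfrak{h} = \mathfrak{g}^\theta$ (the $+1$-eigenspace of $d\theta$) and $\mathfrak{g}^\sigma$ is the $-1$-eigenspace. Since $p \neq 2$, we have $\mathfrak{g} = \mathfrak{h} \oplus \mathfrak{g}^\sigma$ as vector spaces via $X = \tfrac12(X + \theta X) + \tfrac12(X - \theta X)$. For $X \in \mathfrak{h}$ and $Y \in \mathfrak{g}^\sigma$, $\theta$-invariance of $B$ gives $B(X,Y) = B(\theta X, \theta Y) = B(X, -Y) = -B(X,Y)$, hence $B(X,Y) = 0$; so the two eigenspaces are $B$-orthogonal. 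Non-degeneracy of $B$ then forces the restrictions $B|_{\mathfrak{h}}$ and $B|_{\mathfrak{g}^\sigma}$ to be non-degenerate individually, which is exactly the claim.

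The main obstacle is the small-characteristic behavior of the Killing form: over $\mathbb{F}_q$ with $p$ small, $\kappa$ can be degenerate even on a semisimple Lie algebra (e.g. $\mathfrak{sl}_n$ when $p \mid n$). The clean fix, consistent with the paper's standing hypothesis, is to observe that $p > p_0$ can be taken large enough — in particular larger than any torsion or bad prime for the finitely many simple factor types occurring in $\mathbf{G}$ — so that $\kappa$ is non-degenerate on $[\mathfrak{g},\mathfrak{g}]$; alternatively one replaces $\kappa$ by the trace form $(X,Y) \mapsto \mathrm{tr}(\rho(X)\rho(Y))$ of a faithful representation $\rho$ of $\mathbf{G}$, which is defined over $\mathbb{Z}$, is automatically $G$- and $\theta$-invariant, and is non-degenerate for all but finitely many $p$. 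Either way the argument is uniform in $q$ once $p > p_0$, so no new restriction beyond those already imposed is introduced. The remaining steps are the routine eigenspace computations above.
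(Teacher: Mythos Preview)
Your proposal is correct and follows essentially the same architecture as the paper: decompose $\mathfrak{g}$ into its center and its semisimple derived part, use the Killing form on the latter, and handle the center separately (you are actually more explicit here than the paper, which is vague about the center). The one noteworthy difference is in the orthogonality step: the paper does not first establish $\theta$-invariance of $B$ and then deduce $\mathfrak{h}\perp\mathfrak{g}^\sigma$ from the eigenvalue argument as you do; instead it computes directly that for $\alpha\in\mathfrak{h}$ and $\beta\in\mathfrak{g}^\sigma$ the operator $\mathrm{ad}(\alpha)\mathrm{ad}(\beta)$ swaps $\mathfrak{h}$ and $\mathfrak{g}^\sigma$, hence has trace zero. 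Your route via $B(X,Y)=B(\theta X,\theta Y)=-B(X,Y)$ is slicker and more conceptual, and it works uniformly for whatever $B$ you build (including the part on $\mathfrak{z}$), whereas the paper's trace computation is specific to the Killing form. Your discussion of the small-characteristic degeneracy of $\kappa$ is also a genuine improvement over the paper, which ignores this point in the lemma itself.
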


\begin{proof}
Suppose $\mathfrak{g}$ is semi-simple. Let $B$ be the Killing form on $\mathfrak{g}$. Let $\alpha, \gamma \in \mathfrak{h}$ and $\beta \in \mathfrak{g}^\sigma$. Since
\begin{align*}
\sigma\big(\mathrm{ad}(\alpha)\mathrm{ad}(\beta)\gamma\big) &= \sigma(\alpha\beta\gamma - \alpha\gamma\beta - \beta\gamma\alpha + \gamma\beta\alpha) \\
&= \gamma\beta\alpha - \beta\gamma\alpha - \alpha\gamma\beta + \alpha\beta\gamma \\
&= \mathrm{ad}(\alpha)\mathrm{ad}(\beta)\gamma,
\end{align*}
so $\mathrm{ad}(\alpha)\mathrm{ad}(\beta)\mathfrak{h} \subset \mathfrak{g}^\sigma$. Similarly, $\mathrm{ad}(\alpha)\mathrm{ad}(\beta)\mathfrak{g}^\sigma \in \mathfrak{h}$. Thus $\mathrm{Tr}\big(\mathrm{ad}(\alpha)\mathrm{ad}(\beta)\big) = 0$, i.e. $\mathfrak{h}$ is orthogonal to $\mathfrak{g}^\sigma$ with respect to $B$.

If $\mathfrak{g}$ is not semi-simple, let $\mathfrak{g} = \mathfrak{g}' \oplus \mathfrak{z}$ such that $\mathfrak{g}'$ is semi-simple and $\mathfrak{z}$ is the center. The decomposition is invariant under the action of any elements in $\mathrm{Aut}(\mathfrak{g})$, therefore $\theta$-invariant. Then it follows from the semi-simple case by taking the Killing form on each semi-simple component.
\end{proof}

\begin{definition}
A set of three linearly independent elements $(h, e, f)$ is called an $\mathfrak{sl}_2$-triple if the bracket relations are satisfied:
\[
[h, e] = 2e, \quad [h, f] = -2f, \quad [e, f] = h.
\]

An $\mathfrak{sl}_2$-triple is called \textbf{normal} if $e, f \in \mathfrak{g}^\sigma$ and $h \in \mathfrak{h}$.
\end{definition}

\begin{lemma}\label{sl_lemma}
Let $(G, H, \theta)$ be a symmetric pair. Let $x \in \mathfrak{g}^\sigma$ be a nilpotent element. Then

\begin{enumerate}
\item $0 \in \overline{\mathrm{Ad}(H)x}$.
\item There is a correspondence between the set of all $H$-orbits in $\mathfrak{n} - \{0\}$ and the set of all $H$-conjugacy classes of normal $\mathfrak{sl}_2$-triple.
\end{enumerate}
\end{lemma}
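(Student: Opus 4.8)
The plan is to reduce both assertions to a single construction: a Jacobson--Morozov theorem adapted to the symmetric pair, producing, for each nonzero nilpotent $x\in\mathfrak g^\sigma$, a \emph{normal} $\mathfrak{sl}_2$-triple $(h,e,f)$ with $e=x$, valid once $p$ is sufficiently large. Granting this, part (1) is immediate: the element $h\in\mathfrak h$ acts on $\mathfrak g$ by $\mathrm{ad}(h)$ with integer eigenvalues (by $\mathfrak{sl}_2$-theory, once $p$ exceeds the relevant weights), so $h$ is semisimple in the reductive Lie algebra $\mathfrak h$ and, in good characteristic, equals the differential of a cocharacter $\lambda\colon\mathbb G_m\to H$; since $\mathrm{ad}(h)x=2x$ we get $\mathrm{Ad}(\lambda(t))x=t^2x$, whence $0=\lim_{t\to0}\mathrm{Ad}(\lambda(t))x\in\overline{\mathrm{Ad}(H)x}$. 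For part (2), one defines a map sending the $H$-orbit of a nonzero nilpotent $x\in\mathfrak g^\sigma$ to the $H$-conjugacy class of a normal triple containing $x$; its inverse is the obvious $[(h,e,f)]\mapsto[e]$. Well-definedness of the first map needs precisely (a) existence of such a triple and (b) uniqueness of a normal triple with given nilpositive element up to conjugation by $H_x^\circ$, because $\mathrm{Ad}(k)$ for $k\in H$ commutes with $\theta$ and hence carries normal triples through $x$ to normal triples through $\mathrm{Ad}(k)x$.

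To prove existence, start from the ordinary Jacobson--Morozov theorem, which for $p$ large and good embeds $x$ in an $\mathfrak{sl}_2$-triple $(h_0,x,f_0)$ in $\mathfrak g$; this is the step where the exponential from nilpotent elements of $\mathfrak g$ to $G$ enters and where the bound $p_0$ originates. Because $x\in\mathfrak g^\sigma$, i.e. $\theta(x)=-x$, applying the automorphism $\theta$ and correcting signs produces a second $\mathfrak{sl}_2$-triple with the same nilpositive element, namely $(\theta(h_0),x,-\theta(f_0))$; by the uniqueness part of Jacobson--Morozov there is $u$ in the unipotent radical $R_u(G_x)$ of $G_x$ with $\mathrm{Ad}(u)(h_0,x,f_0)=(\theta(h_0),x,-\theta(f_0))$. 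Applying $\theta$ to this identity and composing shows that $\theta(u)u$ centralizes the subalgebra $\langle h_0,x,f_0\rangle$ while still lying in $R_u(G_x)$; since the reductive centralizer $G_{\langle h_0,x,f_0\rangle}$ meets the normal unipotent subgroup $R_u(G_x)$ only in $\{1\}$, we get $\theta(u)=u^{-1}$. For $p$ large, $u$ has a unique unipotent square root $v=\exp\!\big(\tfrac12\log u\big)\in G_x$, it satisfies $\theta(v)=v^{-1}$, and a direct check using $\theta(\mathrm{Ad}(v)\,\cdot\,)=\mathrm{Ad}(v^{-1})\theta(\cdot)$ together with $v^2=u$ shows that $\mathrm{Ad}(v)(h_0,x,f_0)$ is a normal $\mathfrak{sl}_2$-triple.

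For the uniqueness needed in (2), let $(h,x,f)$ and $(h',x,f')$ be normal; ordinary Jacobson--Morozov uniqueness gives $u\in R_u(G_x)$ with $\mathrm{Ad}(u)(h,x,f)=(h',x,f')$, and applying $\theta$ and using normality shows $\mathrm{Ad}(\theta(u))$ does the same. Hence $u^{-1}\theta(u)$ centralizes $\langle h,x,f\rangle$ and lies in $R_u(G_x)$, so it is trivial; thus $\theta(u)=u$, i.e. $u\in H$, and being a unipotent element of $R_u(G_x)\cap H$ that centralizes $x$ it lies in $H_x^\circ$. Combining existence, this uniqueness, and the $H$-equivariance of the construction yields the bijection of (2), noting that $x=0$ is excluded on both sides since a normal triple has $e\neq0$.

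The serious part of the argument is the choice of $p_0$. One must take it large enough (depending only on $\mathbf G$, as in Theorem~\ref{multi_one_thm}) that Jacobson--Morozov and its uniqueness hold in $\mathfrak g$ over $\bar F$, that $u-1$ and $\log u$ are nilpotent of index below $p$ so that $\exp$ and unique unipotent square roots make sense inside $G_x$, that $\langle h_0,x,f_0\rangle$ acts semisimply on $\mathfrak g$ with the expected integral weights and $G_x$ has the usual Levi decomposition (so that $G_{\langle h_0,x,f_0\rangle}\cap R_u(G_x)=\{1\}$), and that the semisimple element $h$ integrates to a cocharacter of $H$. Once these characteristic-$p$ points are secured, everything above is a formal manipulation of $\theta$ and the $\mathfrak{sl}_2$ relations, and I expect no further obstacle.
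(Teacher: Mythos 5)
Your argument is correct in substance and reaches the same two conclusions, but by a genuinely different route. For existence of a normal triple through $x$, the paper averages: it sets $h'=\tfrac{h+\theta(h)}{2}$, checks $[h',e]=2e$, completes $(h',e)$ to a triple by Morozov's lemma, and then averages $f$; you instead compare the triple $(h_0,x,f_0)$ with its $\theta$-twist $(\theta(h_0),x,-\theta(f_0))$, use Jacobson--Morozov uniqueness to produce $u\in R_u(G_x)$ conjugating one to the other, deduce $\theta(u)=u^{-1}$ from the triviality of $G_{\langle h_0,x,f_0\rangle}\cap R_u(G_x)$, and conjugate by the unipotent square root of $u$. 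For the uniqueness in part (2), the paper runs the weight-space argument (the unipotent group $U$ attached to $\mathfrak h_e^+$ and the identity $Uh=h+\mathfrak h_e^+$), while you again invoke Jacobson--Morozov uniqueness and the involution to force $\theta(u)=u$, i.e.\ $u\in H_x$. Your scheme is more uniform (existence and uniqueness come from the same formalism) but leans on Levi decompositions of centralizers and unique unipotent square roots in good characteristic. The one place where your sketch is thinner than the paper is part (1): the paper does not merely assert that $h$ integrates to a cocharacter of $H$; it explicitly builds the torus element as $\alpha_t\alpha_1$ with $\alpha_t=\exp(te)\exp(-t^{-1}f)\exp(te)$ and proves $\alpha_t\alpha_1\in H$ by spreading out a polynomial identity from characteristic $0$, and this explicit construction is exactly where the bound $p_0$ (depending only on $\mathbf G$) comes from. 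Your appeal to an associated cocharacter landing in $H$ is true in good characteristic, but to make it rigorous you would need either the same spreading-out device or a citation to the theory of associated cocharacters for $\theta$-groups; note in particular that in characteristic $p$ the differential $X_*(S)\to\mathrm{Lie}(S)$ only determines a cocharacter modulo $p$, so ``$d\lambda=h$ with $\lambda$ in $H$'' requires an argument, not just semisimplicity of $h$.
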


\begin{proof}
\begin{enumerate}
\item By Jacobson-Morozov Theorem (\cite{Jacobson} Chapter III, Theorem 10 and 17), any nilpotent element $e = x \in \mathfrak{g}$ can be included into an $\mathfrak{sl}_2$-triple $(h, e, f)$ in the following way:

    Let $h' \coloneqq \frac{h + \theta(h)}{2}$, then
    \[
    [h', e] = \frac{1}{2}[h, e] + \frac{1}{2}[\theta(h), e] = e + \frac{1}{2}\theta\big([h, \theta(e)]\big) = e + \frac{1}{2}\theta\big([h, -e]\big) = e - \theta(e) = 2e.
    \]

    So by Morozov Lemma (\cite{Jacobson} Chapter III, Lemma 7), $e$ and $h'$ can be completed to an $\mathfrak{sl}_2$-triple $(h', e, f')$. Let $f'' \coloneqq \frac{f' - \theta(f')}{2}$. Then we get an $\mathfrak{sl}_2$-triple $(h', e, f'')$ with $e$ nilpotent, $h' \in \mathfrak{g}^\theta = \mathfrak{h}$, and $f'' \in \mathfrak{g}^\sigma$.

    We would want to use the exponential map $\mathrm{exp}: \mathfrak{n} \rightarrow \mathcal{U}$ (which is $\sigma$-equivariant and intertwines the adjoint action with conjugation action) of this $\mathfrak{sl}_2$-triple to achieve a homomorphism $SL_2 \rightarrow G$ and
    \begin{align}\label{limit_eq}
    \lim_{t \rightarrow 0} \mathrm{Ad}(\begin{bmatrix}t & 0 \\ 0 & t^{-1}\end{bmatrix})x = \lim_{t \rightarrow 0}\begin{bmatrix}0 & 2t \\ 0 & 0\end{bmatrix} = 0.
    \end{align}

    However, we are working over finite fields where the exponential map cannot be defined, but we can still use the idea of an exponential map. For any characteristics $p$ larger than the order of $e$ and $f$, we denote
    \[
    \exp(te) \coloneqq 1 + te + \frac{1}{2!}t^2e^2 + \cdots, \quad \exp(tf) \coloneqq 1 + tf + \frac{1}{2!}t^2f^2 + \cdots,
    \]

    which is well-defined since $e$ and $f$ are nilpotent and the characteristic is large enough. Here we are thinking of an embedding $\mathbf{G} \hookrightarrow \mathbf{Mat}$ into matrix scheme. Now consider the element,
    \[
    \alpha_t \coloneqq \exp(te)\exp(-t^{-1}f)\exp(te), \quad \theta(\alpha_t) = \alpha_{-t}.
    \]
    The Construction of such an element comes from the following calculation in $SL_2$:
    \[
    \begin{bmatrix}1 & t \\ 0 & 1\end{bmatrix}\begin{bmatrix}1 & 0 \\ -t^{-1} & 1\end{bmatrix}\begin{bmatrix}1 & t \\ 0 & 1\end{bmatrix} = \begin{bmatrix}0 & t \\ -t^{-1} & 0\end{bmatrix}, \quad \begin{bmatrix}0 & t \\ -t^{-1} & 0\end{bmatrix}\begin{bmatrix}0 & 1 \\ -1 & 0\end{bmatrix} = \begin{bmatrix}t & 0 \\ 0 & t^{-1}\end{bmatrix}.
    \]

    We claim the following:
    \begin{enumerate}
    \item $\exp(te), \exp(tf) \in G$.
    \item $\alpha_t\alpha_1 \in H$, i.e.
    \begin{equation}\label{eq_1}
    \theta(\alpha_t\alpha_1) = \alpha_{-t}\alpha_{-1} = \alpha_t\alpha_1 \Longleftrightarrow \alpha_t^2 = \alpha_{-1}^2.
    \end{equation}
    \end{enumerate}

    We have seen that if there exists an exponential map both claims are true. $\mathbf{G}$ is defined by a finite collection of polynomials, so $\exp(te) \in G$ is determined by whether they satisfy these polynomials. The relation in \eqref{eq_1} is a polynomial as well.

    \begin{lemma}
    Let $V \subset \mathbb{A}^N$ an affine scheme over $Z$, $f \in \mathbb{Q}[x_1, x_2, \cdots, x_N]$ a regular function. If $f(V(\mathbb{C})) = 0$, then there is $p_0$ such that $\forall p > p_0$ we have $f(V(\mathbb{\bar F}_p)) = 0$.
    \end{lemma}

    This lemma is clear by taking $p_0$ large enough that all the coefficients of $f$ make sense over $\mathbb{\bar F}_p$. Therefore, for large enough $p_0$, our claims hold.

    Then $0 \in \overline{\mathrm{Ad}(H)x}$ because of equation \eqref{limit_eq}.

\item Assume that $(h, e, f)$ and $(h_1, e, f_1)$ are two normal $\mathfrak{sl}_2$-triples. Let $y = h_1 - h$, so $[e, y] = 0$ and $y \in \mathfrak{h}$. Since $[h, e] = 2e$, for any $x \in \mathfrak{h}_e$, we have $[x, h] \in \mathfrak{h}_e$ since
\[
\big[[x, h], e\big] = -\big[[h, e], x\big] - \big[[e, x], h\big] = -[2e, x] = 0.
\]

So $\mathfrak{h}_e$ is stable under $\mathrm{ad}(h)$. From representation theory of $\mathfrak{sl}_2$, we see that $h$ is semi-simple, and $\mathrm{ad}(h)$ is diagonalizable on $\mathfrak{h}_e$. In fact, its eigenvalues are non-negative integers (highest weights). Let $\mathfrak{h}_e^+ \subset \mathfrak{h}_e$ be the subspace spanned by all eigenvectors with strictly positive integers eigenvalues. Then
\[
\mathfrak{h}_e^+ = \mathfrak{h}_e \cap [e, \mathfrak{g}],
\]

and $y = [e, f_1 - f] \in \mathfrak{h}_e^+$. Let $U$ be the subgroup of $G$ corresponding to $\mathrm{ad}\mathfrak{h}_e^+$, then $U$ is a unipotent subgroup. Now $Uh \subset h + \mathfrak{h}_e^+$ is closed, and $Uh$ is open in $h + \mathfrak{h}_e^+$. Therefore $Uh = h + \mathfrak{h}_e^+ \subset Hh$, i.e. there is $a \in H$ such that $a(h, e, f) = (h_1, e, af)$. But an $\mathfrak{sl}_2$-triple is determined by the first two elements (see Corollary 3.5 on p.984 of \cite{Kostant2} by replacing $h$ with $x$), so $af = f_1$. This gives a well-defined map $He \mapsto H(h, e, f)$ we desired. In fact, this is a one-to-one correspondence.
\end{enumerate}
\end{proof}

\begin{lemma}\label{conjugate_lemma}
Let $(x, e, f)$ and $(x_1, e_1, f_1)$ be normal $\mathfrak{sl}_2$-triples. Then they are $H$-conjugate iff $x$ and $x_1$ are $H$-conjugate.
\end{lemma}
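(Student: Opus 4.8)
The plan is to prove the non-trivial direction: assuming $x$ and $x_1$ are $H$-conjugate, deduce that the whole normal $\mathfrak{sl}_2$-triples $(x,e,f)$ and $(x_1,e_1,f_1)$ are $H$-conjugate. (The converse is immediate, since applying $h \in H$ to a normal $\mathfrak{sl}_2$-triple yields a normal $\mathfrak{sl}_2$-triple with first element $\mathrm{Ad}(h)x$.) After replacing $(x_1,e_1,f_1)$ by an $H$-conjugate, I may assume $x_1 = x$ from the outset; so the task reduces to showing that any two normal $\mathfrak{sl}_2$-triples sharing the \emph{same} first element $x$ are conjugate under the stabilizer $H_x = (H\times H)_g$-type group — more precisely under $Z_H(x)$, the centralizer of $x$ in $H$.

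The key steps are as follows. First, $x$ is semisimple with $\mathrm{ad}(x)$ having integer eigenvalues (it is the semisimple element of an $\mathfrak{sl}_2$-triple, and by the representation theory of $\mathfrak{sl}_2$ its eigenvalues on $\mathfrak{g}$ are integers); decompose $\mathfrak{g} = \bigoplus_{k\in\mathbb{Z}} \mathfrak{g}_k$ into $\mathrm{ad}(x)$-eigenspaces, and note this decomposition is $\theta$-stable since $x \in \mathfrak{h}$, so it restricts to $\mathfrak{g}^\sigma = \bigoplus_k \mathfrak{g}^\sigma_k$. Both $e, e_1$ lie in $\mathfrak{g}^\sigma_2$ and satisfy $[x, e] = 2e$, $[x,e_1] = 2e_1$. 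Second, I claim $e$ and $e_1$ are conjugate under $Z_H(x)$. This is the heart of the argument: consider the group $P = Z_G(x) \ltimes U$ where $U$ is the unipotent radical attached to the positive part $\mathfrak{g}^{>0} = \bigoplus_{k>0}\mathfrak{g}_k$, intersected appropriately with the $\theta$-fixed structure; the orbit of $e$ under $Z_H(x)$ is open in $\mathfrak{g}^\sigma_2 \cap [\text{something}]$ by an argument analogous to the one in part (2) of Lemma \ref{sl_lemma} (using that $[e, \mathfrak{g}^\sigma_0 \cap \mathfrak{h}] = \mathfrak{g}^\sigma_2$ up to the obstruction living in positive-degree pieces, which can be killed by a unipotent element preserving $x$). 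Once $e$ and $e_1$ are conjugate under $Z_H(x)$, replace $(x_1,e_1,f_1) = (x,e_1,f_1)$ by its conjugate to assume $e_1 = e$ as well. Third and finally, with $x_1 = x$ and $e_1 = e$, the third element is forced: an $\mathfrak{sl}_2$-triple is determined by its first two elements (the same citation to \cite{Kostant2} used in Lemma \ref{sl_lemma}), so $f_1 = f$ and the triples coincide — in particular they are trivially $H$-conjugate.

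The main obstacle I anticipate is the second step: showing $e$ and $e_1$ are conjugate under $Z_H(x)$ rather than merely under $Z_G(x)$ or under $H$ without the centralizing constraint. The clean way is to invoke Lemma \ref{sl_lemma}(2) together with the Kostant–Rallis-type theory cited from \cite{Kostant}: normal $\mathfrak{sl}_2$-triples are classified by $H$-orbits of nilpotents in $\mathfrak{g}^\sigma$, and the associated semisimple element $x$ (the ``characteristic'' or weighted Dynkin datum) is an $H$-conjugacy invariant that, within a fixed nilpotent $H$-orbit, pins down the nilpotent up to the action of the centralizer — this is where the hypothesis that stabilizers of semisimple elements are connected (inherited from the running hypothesis of Theorem \ref{multi_one_thm}) is used, to ensure $Z_H(x)$ acts transitively on the relevant set of nilpotents with characteristic $x$ rather than just with finitely many orbits. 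I would carry out this step by first establishing the infinitesimal statement $\mathfrak{z}_{\mathfrak{h}}(x)\cdot e$ is open in $\{n \in \mathfrak{g}^\sigma_2 : (x,n) \text{ extends to an } \mathfrak{sl}_2\text{-triple}\}$, then upgrading from open-orbit to single-orbit via connectedness of $Z_H(x)$ and the closedness argument (a unipotent group acting on an affine space with an open orbit has that orbit equal to the whole space), exactly mirroring the mechanism in the proof of Lemma \ref{sl_lemma}(2).
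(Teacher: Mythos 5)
Your proposal matches the paper's proof: reduce to $x=x_1$, use the $\mathfrak{sl}_2$ representation theory fact that $[e,\mathfrak{h}_x]$ is the whole weight-$2$ part $\mathfrak{g}_1^\sigma$ to show the $H_x$-orbits of $e$ and $e_1$ are open in a common connected Zariski-open subset $V\subset\mathfrak{g}_1^\sigma$, hence coincide, and finish by noting a triple is determined by its first two elements. One small correction: the paper does not need the connectedness hypothesis on stabilizers here --- the single-orbit conclusion comes from connectedness of the open set $V$ (every orbit in $V$ is open and $V$ is irreducible), not from connectedness of $H_x$.
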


\begin{proof}
We may assume that $x = x_1$. Let $\mathfrak{g}_x$, $\mathfrak{h}_x$, $\mathfrak{g}^\sigma_x$ be centralizer of $x$ in the corresponding algebras. Let $\mathfrak{g}_1$, $\mathfrak{h}_1$, $\mathfrak{g}^\sigma_1$ be respectively the spaces of all $y$ in $\mathfrak{g}$, $\mathfrak{h}$, $\mathfrak{g}^\sigma$ such that $[x, y] = 2y$. Clearly,
\[
\mathfrak{g}_x = \mathfrak{h}_x + \mathfrak{g}^\sigma_x, \quad \mathfrak{g}_1 = \mathfrak{h}_1 + \mathfrak{g}^\sigma_1.
\]
Note that $[\mathfrak{g}_x, \mathfrak{g}_1] \subset \mathfrak{g}_1$, so $[\mathfrak{h}_x, \mathfrak{g}^\sigma_1] \subset \mathfrak{g}^\sigma_1$, and $\mathfrak{g}_1^\sigma$ is a $H_x$-module. Define
\[
V \coloneqq \{y \in \mathfrak{g}_1^\sigma \mid [\mathfrak{h}_x, y] = \mathfrak{g}_1^\sigma\},
\]
which is a Zariski open set. By representation theory of $\mathfrak{sl}_2$, any weight vector of weight $2$ for $x$ is the image of a zero weight vector under the action $\mathrm{ad}x$. So
\[
[e, \mathfrak{g}_x] = \mathfrak{g}_1 \Longrightarrow [e, \mathfrak{h}_x] = \mathfrak{g}_1^\sigma, \quad [e, \mathfrak{g}_x^\sigma] = \mathfrak{h}_1.
\]
Thus, $e \in V$, and similarly $e_1 \in V$. For any $y \in V$, because of the definition of $V$, tangent plane of $H_xy$ is the same of tangent plane of $V$ at $y$. So $H_xy \in V$ is an open subset, and $V$ is connected since it is open. Thus $V$ consists of a single $H_x$-orbit, i.e. $e_1 = ae$ for some $a \in H_x$. Then $a(x, e, f) = (x, e_1, af)$, and $af = f_1$ because this triple is determined by the first two elements.
\end{proof}

\begin{proposition}\label{closesd_to_ss}
Let $(G, H, \theta)$ be a symmetric pair. Let $g \in G$ such that $HgH$ is a closed $H \times H$-orbit. Let $x = s(g)$. Then $x$ is semi-simple as an element of $G$, and $\mathrm{Ad}(H)x$ is a closed $H$-orbit.
\end{proposition}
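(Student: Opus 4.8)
The plan is to argue by contradiction. Assume $HgH$ is closed in $G$ but $x=s(g)$ is not semisimple. I would first transport the question to the adjoint $H$-action on $G^\sigma$: the symmetrization map $s$ intertwines the two-sided $H\times H$-action on $G$ with $H$-conjugation on $G^\sigma$, and $s^{-1}\big(\mathrm{Ad}(H)x\big)=HgH$; moreover, since $G\to G/H$ is an open quotient and $s$ realizes $G/H$ as a closed subvariety of $G^\sigma$ (see \cite{Aizenbud}, \S7.1), one gets $s^{-1}\big(\overline{\mathrm{Ad}(H)x}\big)=\overline{HgH}$. Hence $HgH$ is closed in $G$ if and only if $\mathrm{Ad}(H)x$ is closed in $G^\sigma$, and it suffices to produce a point of $\overline{\mathrm{Ad}(H)x}$ not lying on $\mathrm{Ad}(H)x$.

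Next I would check that the Jordan decomposition of $x$ is $\sigma$-compatible. Write $x=x_s x_u=x_u x_s$ with $x_s$ semisimple and $x_u$ unipotent. Applying the algebraic anti-automorphism $\sigma$ (which preserves semisimplicity and unipotency and reverses products) and invoking uniqueness of the Jordan decomposition of $\sigma(x)=x$, we obtain $\sigma(x_s)=x_s$, $\sigma(x_u)=x_u$, equivalently $x_s,x_u\in G^\sigma$ and $\theta(x_s)=x_s^{-1}$, $\theta(x_u)=x_u^{-1}$. Consequently $M\coloneqq Z_G(x_s)$ equals $Z_G(\theta(x_s))$, hence is $\theta$-stable, and for $p>p_0$ it is smooth and reductive with $\mathrm{Lie}\,M=\mathfrak g_{x_s}$; thus $\big(M,\,M\cap H,\,\theta|_M\big)$ is again a symmetric pair, and $x_u$ is a unipotent element of $M$ fixed by $\sigma$.

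The heart of the argument is to apply Lemma \ref{sl_lemma} to this descendant pair. Since $x$ is not semisimple, $x_u\neq 1$, so (again using $p$ large, exactly as in the proof of Lemma \ref{sl_lemma}) $n\coloneqq\log x_u$ is a nonzero nilpotent element of $(\mathrm{Lie}\,M)^\sigma$. By Lemma \ref{sl_lemma}(1) applied to $(M,M\cap H,\theta|_M)$, complete $n$ to a normal $\mathfrak{sl}_2$-triple $(h',n,f'')$ with $h'\in\mathrm{Lie}(M\cap H)$; the associated one-parameter subgroup $\lambda(t)$, built from $\exp(te)$ and $\exp(tf)$ as in that proof (the image of $\mathrm{diag}(t,t^{-1})$), lies in $M\cap H$ and satisfies $\mathrm{Ad}(\lambda(t))\,n\to 0$ as $t\to 0$. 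Since $\lambda(t)\in M=Z_G(x_s)$,
\[
\lambda(t)\,x\,\lambda(t)^{-1}=x_s\cdot\lambda(t)\,x_u\,\lambda(t)^{-1}=x_s\,\exp\big(\mathrm{Ad}(\lambda(t))\,n\big)\ \xrightarrow{t\to 0}\ x_s .
\]
Therefore $x_s\in\overline{\mathrm{Ad}(H)x}$, whereas $x_s\notin\mathrm{Ad}(H)x$ because every element of $\mathrm{Ad}(H)x$ has a nontrivial unipotent part. So $\mathrm{Ad}(H)x$ is not closed, contradicting the reduction of the first paragraph; hence $x$ is semisimple, and $\mathrm{Ad}(H)x=s(HgH)$ is closed directly from the same correspondence.

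The step I expect to be most delicate is the passage to the descendant symmetric pair $(Z_G(x_s),\,Z_G(x_s)\cap H,\,\theta)$: verifying it is reductive and $\theta$-stable with Lie algebra the honest centralizer $\mathfrak g_{x_s}$, and that the torus coming out of the normal $\mathfrak{sl}_2$-triple of $n$ genuinely lands in $H$ (so that conjugating by it fixes $x_s$ while contracting $x_u$ to $1$). This, together with the $\exp/\log$ for unipotents, is where the hypothesis $p>p_0$ is really used; the topological transfer between $G$ and $G^\sigma$ — in particular closedness of $s(G)$ — I would quote from the references, or prove directly from the structure of the symmetrization map.
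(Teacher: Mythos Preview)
Your proposal is correct and follows essentially the same route as the paper's proof: reduce closedness of $HgH$ to closedness of $\mathrm{Ad}(H)x$ via the symmetrization map (using that $G/H\hookrightarrow G^\sigma$ is a closed embedding), observe that the Jordan components $x_s,x_u$ lie in $G^\sigma$, pass to the descendant symmetric pair $\big(G_{x_s},\,H\cap G_{x_s},\,\theta\big)$, and invoke the one-parameter contraction from the normal $\mathfrak{sl}_2$-triple of Lemma~\ref{sl_lemma} to exhibit $x_s\in\overline{\mathrm{Ad}(H)x}$. The paper packages the last step as a three-case induction ($x_s=1$, then $x_s\in Z(G)$, then general $x$ by replacing $G$ with $G_{x_s}$), whereas you go directly to the descendant pair and make the limit $\lambda(t)\,x\,\lambda(t)^{-1}\to x_s$ explicit; these are the same argument with different bookkeeping.
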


\begin{proof}
Clearly the image of $HgH$ in $G/H$ is closed. Since the symmetrization map, viewed as map from $G/H$ to $G$, is a closed embedding, it follows that the $H$-orbit of $x$ is closed. Let $x = x_sx_u$ be the Jordan decomposition of $x$. The uniqueness of Jordan decomposition implies that both $x_s, x_u \in G^\sigma$. We claim that $x_s \in \overline{\mathrm{Ad}(H)x}$ for any $x$, and since $\mathrm{Ad}(H)x$ is closed, $x = \mathrm{Ad}(h)x_s$ is semi-simple. Now we prove our claim in the following steps:

\begin{itemize}
\item If $x_s = 1$, it follows from the proof of Lemma \ref{sl_lemma}.

\item If $x_s \in Z(G)$, then this follows from the first case since conjugation acts trivially on $Z(G)$.
\item For general $x$, $x \in G_{x_s}$ and $G_{x_s}$ is $\theta$-invariant. Now this follows from the previous case since $x \in Z(G_{x_s})$.
\end{itemize}
\end{proof}

\begin{theorem}\label{ss_to_closed}
Let $x \in G^\sigma$ be semi-simple as an element of $G$. Then $\mathrm{Ad}(H)x$ is a closed orbit.
\end{theorem}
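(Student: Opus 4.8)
The plan is to run the Hilbert--Mumford (Birkes--Kempf) criterion for the reductive group $H = G^\theta$ (reductive since $\mathrm{char}\,F\neq 2$): the orbit $\mathrm{Ad}(H)x$ fails to be closed only if there is a cocharacter $\lambda\colon \mathbb{G}_m\to H$ such that $y\coloneqq \lim_{t\to 0}\mathrm{Ad}(\lambda(t))x$ exists but $y\notin \mathrm{Ad}(H)x$. So I would argue by contradiction: assume such $\lambda$ and $y$ exist, and show $y\in\mathrm{Ad}(H)x$ after all. First, $\lambda$ determines a parabolic $P=P_\lambda\subset G$ with Levi $L=Z_G(\lambda)$ and unipotent radical $U$, all $\theta$-stable because $\lambda$ lands in $G^\theta$. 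Existence of the limit says $x\in P$; writing $x=\ell u$ with $\ell\in L$, $u\in U$, we get $y=\ell\in L$, and since $\sigma$ intertwines $\mathrm{Ad}(\lambda(t))$ with itself (as $\theta\circ\lambda=\lambda$), passing to the limit gives $y\in G^\sigma$ as well.

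Next, since $x$ is semisimple its conjugacy class is closed in $G$, so $y\in\overline{\mathrm{Ad}(G)x}=\mathrm{Ad}(G)x$; hence $y$ is semisimple and $G$-conjugate to $x$. The standard fact that a semisimple element of a parabolic is conjugate, \emph{by the unipotent radical}, to its Levi component (its maximal torus lies in some Levi subgroup of $P$, and all Levi subgroups are $U$-conjugate) then produces $u_0\in U$ with $u_0 y u_0^{-1}=x$.

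The remaining, and I expect hardest, step is to replace $u_0$ by an element of $H$. Consider $T\coloneqq\{u\in U\mid uyu^{-1}=x\}$; it is nonempty and is a left coset of $Z_U(y)=U\cap Z_G(y)$, which is a \emph{connected} unipotent group, being the unipotent radical of the parabolic $Z_G(y)\cap P$ of the reductive group $Z_G(y)$. Because $y,x\in G^\sigma$ give $\theta(y)=y^{-1}$, $\theta(x)=x^{-1}$, and $\theta(U)=U$, the involution $\theta$ preserves $T$; writing $\theta(u_0)=u_0 b$ one checks $b\in Z_U(y)$ and $\theta(b)=b^{-1}$, and $T$ has a $\theta$-fixed point exactly when there is $a\in Z_U(y)$ with $a\,\theta(a)^{-1}=b$. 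Thus everything reduces to surjectivity of the symmetrization map $a\mapsto a\,\theta(a)^{-1}$ from the connected unipotent group $Z_U(y)$ onto its set of $\theta$-anti-invariant elements $\{b\mid\theta(b)=b^{-1}\}$, which I would prove by induction along a $\theta$-stable central filtration, the base case being the linear identity $(1-\theta)V=\{v\mid\theta(v)=-v\}$ for a vector group $V$ over a field of characteristic $\neq 2$. A $\theta$-fixed $u\in T$ then lies in $U\cap G^\theta\subset H$ and conjugates $y$ to $x$, so $y\in\mathrm{Ad}(H)x$, the desired contradiction; hence $\mathrm{Ad}(H)x$ is closed.

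The characteristic enters only mildly (the identification of $Z_G(y)\cap P$ as a parabolic and the central-filtration induction are unproblematic once $p$ avoids finitely many small primes), consistent with the standing hypothesis $p>p_0$ used elsewhere; and for possibly disconnected $H$ one may run the argument with $H^\circ$ since closedness of $\mathrm{Ad}(H)x$ is equivalent to that of $\mathrm{Ad}(H^\circ)x$.
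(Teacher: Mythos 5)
Your proposal is correct in outline, but it takes a genuinely different route from the paper. The paper runs the Kostant--Rallis/Richardson tangent-space argument: using $\mathfrak{g} = \mathfrak{h} \oplus \mathfrak{g}^\sigma$ it checks that $T_x\mathrm{Ad}(H)x = T_x\mathrm{Ad}(G)x \cap T_xG^\sigma$, so $\mathrm{Ad}(H)x$ is open in the closed set $\mathrm{Ad}(G)x \cap G^\sigma$; any orbit in its boundary would again consist of semisimple elements of $G^\sigma$ and hence also be open in that set, which is impossible. You instead invoke the Kempf/Birkes destabilizing cocharacter $\lambda$ in $H$, pass to the $\theta$-stable parabolic $P_\lambda$, and upgrade $U$-conjugacy of $x$ to its Levi component $y$ to $H$-conjugacy via the vanishing of the nonabelian $H^1$ of $\langle\theta\rangle$ on the connected unipotent group $Z_U(y)$. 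Each step of yours checks out: $y \in G^\sigma$ because $\theta\circ\lambda=\lambda$; $y$ is semisimple and $G$-conjugate to $x$ because semisimple conjugacy classes are closed; and the cocycle computation ($\theta(u_0)=u_0b$, $b\theta(b)=1$, fixed point iff $b=a\theta(a)^{-1}$) is right. What each approach buys: the paper's argument is shorter but needs the displayed formulas to compute the \emph{actual} tangent spaces of the orbits and of $G^\sigma$ (a separability issue, harmless for $p\gg 0$); yours avoids infinitesimal arguments entirely but leans on two nontrivial inputs that you only sketch --- the Hilbert--Mumford/Kempf criterion for reductive (here possibly disconnected, so pass to $H^\circ$) groups in positive characteristic, and the surjectivity of $a\mapsto a\theta(a)^{-1}$ on $Z_U(y)$, which in turn needs $Z_U(y)$ connected (unipotent elements of $Z_G(y)$ lie in $Z_G(y)^\circ$ once $p$ exceeds the order of the relevant component group) and a $\theta$-stable filtration with vector-group quotients. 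Both are standard (the latter is Richardson's lemma on involutions of unipotent groups), so the sketch is acceptable, but a complete write-up would have to supply them, whereas the paper's two displayed tangent-space identities are self-contained.
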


\begin{proof}
To understand $H$-orbit of $x$, we consider $G$-orbit of $x$ by conjugation action, which is closed since $x$ is semi-simple. We claim that $\mathrm{Ad}(H)x = \mathrm{Ad}(G)x \cap G^\sigma$, which is closed. We consider the tangent spaces of these subsets at the point $x$:

\begin{align*}
T_xG^\sigma &= \{\alpha \in \mathfrak{g} \mid \mathrm{Ad}(x)\sigma(\alpha) = \alpha\}, \\
T_x\mathrm{Ad}(G)x &= \{\mathrm{Ad}(x)(\alpha) - \alpha: \alpha \in \mathfrak{g}\} \\
T_x\mathrm{Ad}(H)x &= \{\mathrm{Ad}(x)(\alpha) - \alpha: \alpha \in \mathfrak{h}\}
\end{align*}

Then we have
\[
T_xG^\sigma \cap T_x\mathrm{Ad}(G)x = \{\mathrm{Ad}(x)(\alpha) - \alpha: \alpha \in \mathfrak{g}, \mathrm{Ad}(x)\big(\alpha + \sigma(\alpha)\big) = \alpha + \sigma(\alpha)\}
\]

Since $\mathfrak{g} = \mathfrak{h} \oplus \mathfrak{g}^\sigma$, we can write $\alpha = \alpha_\mathfrak{h} + \alpha_\sigma$ according to this decomposition. Then $\alpha + \sigma(\alpha) = 2\alpha_\sigma$, which means that $\mathrm{Ad}(x)(\alpha_\sigma) - \alpha_\sigma = 0$. Thus,
\[
T_xG^\sigma \cap T_x\mathrm{Ad}(G)x = \{\mathrm{Ad}(x)(\alpha_\mathfrak{h}) - \alpha_\mathfrak{h}: \alpha \in \mathfrak{g}, \mathrm{Ad}(x)\alpha_\sigma = \alpha_\sigma\} = T_x\mathrm{Ad}(H)x.
\]

This shows that $\mathrm{Ad}(H)x \subset \mathrm{Ad}(G)x \cap G^\sigma$ as an open subset. Pick $y \in \overline{\mathrm{Ad}(H)x} - \mathrm{Ad}(H)x \subset \mathrm{Ad}(G)x \cap G^\sigma - \mathrm{Ad}(H)x$. So $y$ is a semi-simple element of $G$ as well, and $\mathrm{Ad}(H)y \subset \mathrm{Ad}(G)x \cap G^\sigma$ as an open subset. But then $\mathrm{Ad}(H)x \cap \mathrm{Ad}(H)y \neq \emptyset$, a contradiction.
\end{proof}

\subsection{Categorical Quotient}
\begin{definition}\label{cat_quo_def}[See \cite{Drezet} Definition 2.11]
Let an algebraic group $G$ act on an algebraic variety $X$. A pair consisting of an algebraic variety $Y$ and a $G$-invariant morphism $\pi: X \rightarrow Y$ is called a \textbf{categorical quotient} of $X$ by the action of $G$ if for any pair $(Y', \pi')$ there exists a unique morphism $\varphi: Y \rightarrow Y'$ such that $\pi' = \varphi \circ \pi$. Clearly, if such a quotient exists, it is unique up to a canonical isomorphism. We denote this quotient by $(G \sslash X, \pi_X)$.
\end{definition}

\begin{theorem}\label{cat_quo_thm}
Let an algebraic group $G$ act on an affine variety $X$. Then:
\begin{enumerate}
    \item The categorical quotient $X \sslash G$ exists. In fact, $X \sslash G = \mathrm{Spec}\mathcal{O}(X)^G$.
    \item Every fibre of the quotient map $\pi_X$ contains a unique closed orbit.
\end{enumerate}
\end{theorem}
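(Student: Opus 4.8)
The plan is to run the standard Geometric Invariant Theory argument, using that $G$ is reductive — which is the case throughout this paper. Write $A\coloneqq\mathcal{O}(X)$, a finitely generated $\bar F$-algebra carrying a rational $G$-action, and $A^G$ for its subalgebra of invariants. The proof has exactly two deep inputs, and everything else is formal: \textbf{(i)} $A^G$ is a finitely generated $\bar F$-algebra — Hilbert's fourteenth problem for reductive groups, i.e.\ Nagata's theorem, together with its positive-characteristic incarnation via geometric reductivity; and \textbf{(ii)} $G$-invariant functions separate disjoint $G$-stable closed subsets of $X$.

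For part \textbf{(1)}: granting (i), set $Y\coloneqq\mathrm{Spec}\,A^G$ and let $\pi=\pi_X\colon X\to Y$ be the morphism induced by the inclusion $A^G\hookrightarrow A$; its image lies in the invariants, so $\pi$ is $G$-invariant. For the universal property I first treat an affine target $(Y',\pi')$: the comorphism $\mathcal{O}(Y')\to A$ attached to $\pi'$ lands in $A^G$ since $\pi'$ is $G$-invariant, hence factors through $A^G$, and this factorization is the comorphism of a unique $\varphi\colon Y\to Y'$ with $\pi'=\varphi\circ\pi$. For an arbitrary variety $Y'$ I reduce to the affine case: $\pi$ will be shown to be surjective and to have the property that two points in the same fibre have the same image under any $G$-invariant morphism (the set-theoretic factorization established with part (2) below), so for any affine open cover $\{U'_\alpha\}$ of $Y'$ the preimages $\pi'^{-1}(U'_\alpha)$ are $G$-stable and $\pi$-saturated; these descend to opens of $Y$ on which the affine case applies, and the resulting morphisms glue.

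For part \textbf{(2)}, the engine is the separation lemma. If $W_1,W_2\subset X$ are disjoint, $G$-stable and closed, then $I(W_1)+I(W_2)=A$, so $1=f_1+f_2$ with $f_i\in I(W_i)$; the ideals $I(W_i)$ are $G$-submodules, so applying the Reynolds operator $R\colon A\to A^G$ (in characteristic $0$; in characteristic $p$ geometric reductivity gives the same effect, possibly after raising to a $p$-th power) yields $1=R(f_1)+R(f_2)$ with $R(f_i)\in I(W_i)\cap A^G$, so the closures of $\pi(W_1)$ and $\pi(W_2)$ in $Y$ are disjoint. Two consequences: first, $\pi$ is surjective, because $A^G$-linearity of $R$ gives $IA\cap A^G=I$ for every ideal $I\subseteq A^G$, so a maximal ideal $\mathfrak{m}\subset A^G$ cannot satisfy $\mathfrak{m}A=A$ and the fibre $V(\mathfrak{m}A)$ is non-empty; second, each fibre $\pi^{-1}(y)$ is $G$-stable, closed and non-empty, so among the orbits inside it there is one, say $Gx$, of minimal dimension, and its boundary $\overline{Gx}\setminus Gx$ is $G$-stable, closed, of strictly smaller dimension, and contained in the fibre, hence by minimality empty, so $Gx$ is closed — while the separation lemma forbids two disjoint closed orbits in one fibre, so the closed orbit is unique. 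Finally, the set-theoretic factorization used in part (1): if $\pi(x_1)=\pi(x_2)$ then $\overline{Gx_1}$ and $\overline{Gx_2}$ lie in the common fibre and each contains its unique closed orbit $O$, so fixing $z\in O\subseteq\overline{Gx_1}\cap\overline{Gx_2}$ and using that any $G$-invariant morphism is continuous and constant on the dense orbit of each $\overline{Gx_i}$, we get $\pi'(x_1)=\pi'(z)=\pi'(x_2)$.

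The genuine difficulty lies entirely in inputs (i) and (ii); both rest on reductivity of $G$ — through the Reynolds operator (linear reductivity) in characteristic zero and through Haboush's theorem (geometric reductivity) in positive characteristic — which is why the statement, as we use it, is for $G$ reductive. Once these are granted the remaining assertions are purely formal, and since this is precisely the content of the standard treatments of GIT, e.g.\ \cite{Drezet}, in the write-up one may alternatively simply quote the theorem from there.
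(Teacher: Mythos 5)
Your proposal is correct and is essentially the same argument the paper invokes: the paper simply cites the standard GIT treatments (\cite{Drezet}, \cite{Newstead}) for part (1) and for part (2) records exactly your mechanism — the minimal-dimension orbit in a fibre is closed, and invariants separating disjoint closed $G$-stable sets force uniqueness. Your explicit remark that reductivity (Nagata/Haboush plus the Reynolds operator or geometric reductivity) is the real hypothesis behind the theorem is a worthwhile precision, since the statement as written says only ``algebraic group'' while the paper only ever applies it to reductive groups such as $H\times H$.
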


\begin{proof}
\begin{enumerate}
\item This is proved in \cite{Drezet} Theorem 2.16, and in \cite{Newstead} Theorem 3.5. Essentially we just need to check $\mathrm{Spec}\mathcal{O}(X)^G$ satisfies the universal property of categorical quotient.

\item This is proved in \cite{Drezet} Lemma 2.13. The idea is that the orbit of minimal dimension is a closed orbit, and it is unique because distinct closed orbits are disjoint.
\end{enumerate}
\end{proof}

\begin{theorem}\label{finite_to_one}
Let $(G, H, \theta)$ be a symmetric pair. There is the standard two-sided action of $H \times H$ on $G$. Then Every fibre of the quotient map $\pi_G: G \rightarrow G \sslash H \times H$ contains finitely many orbits.
\end{theorem}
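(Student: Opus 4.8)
The plan is to transport the problem through the symmetrization map to the adjoint action on $G^\sigma$, then reduce to a nilpotent computation via Jordan decomposition. First I would recall that $s: G/H \hookrightarrow G^\sigma$ is a closed embedding intertwining the two-sided $H \times H$-action with the adjoint $H$-action, and that the fibres of $\pi_G: G \to G \sslash (H \times H)$ are in $H$-equivariant bijection with fibres of the adjoint categorical quotient $G^\sigma \to G^\sigma \sslash H$. So it suffices to show each fibre of $G^\sigma \sslash H$ contains only finitely many $H$-orbits. By Theorem \ref{cat_quo_thm} such a fibre contains a unique closed orbit, which by Proposition \ref{closesd_to_ss} and Theorem \ref{ss_to_closed} is exactly $\mathrm{Ad}(H)x$ for a semisimple $x \in G^\sigma$; every other point $y$ in the fibre has closed orbit equal to $\mathrm{Ad}(H)x_s$ (its semisimple part, which lies in the same fibre since invariant functions cannot separate $y$ from $y_s$), so after conjugating we may assume $y_s = x_s$. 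Thus the whole fibre sits inside $x_s \cdot \mathcal{U}(G_{x_s}) \cap G^\sigma$, and we are reduced to counting $H_{x_s}$-orbits on unipotent elements of $G_{x_s} \cap G^\sigma$, i.e.\ on $(G_{x_s})^{\sigma}$-unipotents.

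Next I would translate this to the Lie algebra: via a $\sigma$-equivariant exponential-type map $\mathcal{N} \to \mathcal{U}$ on the nilpotent/unipotent cones (available in large characteristic, cf.\ the construction in Lemma \ref{sl_lemma}), counting $H_{x_s}$-orbits on unipotents in $(G_{x_s})^\sigma$ becomes counting $H_{x_s}$-orbits on nilpotents in $\mathfrak{g}_{x_s}^\sigma$, which is the symmetric-pair situation for $(G_{x_s}, H_{x_s}, \theta)$. By Lemma \ref{sl_lemma}(2) and Lemma \ref{conjugate_lemma}, the nonzero $H$-orbits in the nilpotent cone $\mathfrak{n}^\sigma$ are in bijection with $H$-conjugacy classes of normal $\mathfrak{sl}_2$-triples, which in turn are classified by the $H$-orbit of the neutral element $h \in \mathfrak{h}$ — and these neutral elements are \emph{semisimple} elements of $\mathfrak{g}$ whose eigenvalues under $\mathrm{ad}$ are integers. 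The key finiteness input is then the standard fact that a reductive group has only finitely many nilpotent orbits (Lusztig, or via the Jacobson–Morozov classification: finitely many conjugacy classes of $\mathfrak{sl}_2$-triples in $\mathfrak{g}$, hence a fortiori finitely many normal ones up to $H$), so $\mathfrak{n}^\sigma$ is a finite union of $H$-orbits. Running this over all the finitely many possible semisimple parts $x_s$ relevant to a single fibre (in fact a single fibre forces a single $H$-conjugacy class of $x_s$), each fibre meets only finitely many orbits.

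The main obstacle, and the step I would be most careful about, is the finiteness of nilpotent (equivalently, $\mathfrak{sl}_2$-triple) orbits under the \emph{smaller} group $H$ rather than $G$, uniformly over the choice of $x_s$: one needs that $\mathfrak{n}^\sigma$, a closed $H$-invariant cone in $\mathfrak{g}^\sigma$, has finitely many $H$-orbits. This follows from the Lemma \ref{sl_lemma}/\ref{conjugate_lemma} classification once one knows there are finitely many $H$-orbits of normal neutral elements $h \in \mathfrak{h}$; since such $h$ are semisimple with bounded (integer) $\mathrm{ad}$-spectrum, they lie in finitely many $H$-orbits because semisimple elements with prescribed eigenvalues form finitely many conjugacy classes in a reductive group (a fixed torus contains only finitely many such, up to the Weyl group). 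One must also check that the characteristic-$p$ exponential map genuinely induces an orbit-preserving bijection $\mathcal{N}(\mathfrak{g}_{x_s}^\sigma) \leftrightarrow \mathcal{U}((G_{x_s})^\sigma)$ for $p > p_0$, which is exactly the kind of "polynomial identity over $\mathbb{Q}$ specializes mod $p$" argument already used in Lemma \ref{sl_lemma}, and that $G_{x_s}$ is reductive so that these structural results apply — guaranteed here by the hypothesis that stabilizers of semisimple elements are connected reductive. The remaining bookkeeping (a single fibre of $\pi_G$ pins down $x_s$ up to $H$-conjugacy, because the invariant functions separate semisimple orbits) is routine.
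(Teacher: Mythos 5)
Your proposal is correct and follows essentially the same route as the paper: reduce via the symmetrization map to the adjoint $H$-action on $G^\sigma$, use Jordan decomposition to show a fibre is governed by $H_{x_s}$-orbits on nilpotents commuting with $x_s$, pass to the Lie algebra by the large-characteristic exponential/logarithm, and obtain finiteness from the classification of $H$-orbits in $\mathfrak{n}-\{0\}$ by normal $\mathfrak{sl}_2$-triples whose neutral elements have bounded integer $\mathrm{ad}$-spectrum. The only cosmetic difference is that you invoke finiteness of nilpotent orbits as a known fact before re-deriving it, whereas the paper states it as a separate lemma and proves it exactly by the neutral-element argument you give.
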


\begin{proof}
Recall that the symmetrization map induced an injection $G/H \xhookrightarrow{s} G^\sigma$. So it suffices to prove that every fibre of the quotient map $\pi_X$ contains finitely many orbits for $X = G^\sigma$ with $H$ acting by conjugation.

Suppose $\mathrm{Ad}(H)x$ is the unique closed orbit in a fibre of $\pi_X$ with $x$ semi-simple, and $\mathrm{Ad}(H)y$ is another orbit with $\pi_X(x) = \pi_X(y)$. Let $y = y_sy_n$ be its Jordan decomposition. We have seen in the proof of \ref{closesd_to_ss} that $y_s \in \overline{\mathrm{Ad}(H)y}$, so by the definition of categorical quotient,
\[
\pi_X(x) = \pi_X(y) = \pi_X(y_x) \Rightarrow \mathrm{Ad}(H) x = \mathrm{Ad}(H) y_s.
\]

This shows that two elements have the same image in the categorical quotient iff their semi-simple part are $H$-conjugate. We claim that two elements with the same semi-simple part are $H$-conjugate iff their nilpotent parts are $H_{x_s}$ conjugate. This follows from the uniqueness of Jordan decomposition:
\[
x = hyh^{-1} \Leftrightarrow x_sx_n = (hy_sh^{-1})(hy_nh^{-1}) \Leftrightarrow h \in H_{x_s}, x_n = hy_nh^{-1}.
\]

Denote $\mathcal{N}_{x_s}$ to be all the nilpotent elements that commute with $x_s$. Then the intersection $\mathrm{Ad}(H)y \cap x_s\mathcal{N}_{x_s}$ is a single $H_{x_s}$-orbit in $\mathcal{N}_{x_s}$. Therefore, the correspondence
\[
\mathrm{Ad}(H)y \rightarrow \mathrm{Ad}(H)y \cap x_s\mathcal{N}_{x_s}
\]

sets up a bijection between the set of all $H$-orbits in this fibre of $\pi_X$ and all $H_{x_s}$-orbit in $\mathcal{N}_{x_s}$. If we look at a new group $G_{x_s}^\sigma$, with its corresponding subsets, $H_{x_s} = G_{x_s}^\theta$, then it corresponds to the following lemma for any symmetric pairs combined with the logarithm map as inverse to the exponential map we constructed in the proof of Lemma \ref{sl_lemma}.
\end{proof}

\begin{lemma}
Let $(G, H, \theta)$ be a symmetric pair. Then there are finitely many $H$-orbit in $\mathfrak{n} - 0$.
\end{lemma}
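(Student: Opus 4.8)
The plan is to use the $\mathfrak{sl}_2$-triple correspondence established in the preceding lemmas together with finiteness of $H$-conjugacy classes of such triples. By Lemma \ref{sl_lemma}, part (2), the set of $H$-orbits in $\mathfrak{n} - \{0\}$ is in bijection with the set of $H$-conjugacy classes of normal $\mathfrak{sl}_2$-triples. By Lemma \ref{conjugate_lemma}, two normal $\mathfrak{sl}_2$-triples $(x,e,f)$ and $(x_1,e_1,f_1)$ are $H$-conjugate if and only if their semisimple members $x$ and $x_1$ are $H$-conjugate; since the semisimple member of a normal triple lies in $\mathfrak{h}$, we get an injection from the set of $H$-orbits in $\mathfrak{n} - \{0\}$ into the set of $\mathrm{Ad}(H)$-orbits of semisimple elements of $\mathfrak{h}$ (or of $\mathfrak{g}$) that arise as the neutral element of some normal $\mathfrak{sl}_2$-triple. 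So it suffices to show this latter set is finite.

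The key step is therefore a finiteness statement for semisimple elements appearing in $\mathfrak{sl}_2$-triples. First I would observe that if $(x,e,f)$ is an $\mathfrak{sl}_2$-triple then $x$ has integer eigenvalues in any representation of $\mathfrak{g}$ (this is the representation theory of $\mathfrak{sl}_2$ already invoked repeatedly above). In particular, fixing a faithful embedding $\mathbf{G}\hookrightarrow\mathbf{Mat}_N$, the element $x$ is a semisimple matrix with integer eigenvalues whose absolute values are bounded in terms of $N$ alone, since $\|x\|$ is controlled by the structure of the $\mathfrak{sl}_2$-representation on $\bar F^N$. There are only finitely many such semisimple conjugacy classes in $\mathfrak{g}$ (a semisimple element is determined up to $G$-conjugacy by its characteristic polynomial, and there are finitely many monic degree-$N$ polynomials with integer roots of bounded absolute value). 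Hence there are finitely many $G$-conjugacy classes, and a fortiori finitely many $H$-conjugacy classes, of such $x$.

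Putting the pieces together: the composite map
\[
\{H\text{-orbits in } \mathfrak{n} - \{0\}\} \longrightarrow \{H\text{-conj.\ classes of normal }\mathfrak{sl}_2\text{-triples}\} \longrightarrow \{\mathrm{Ad}(H)\text{-orbits of semisimple } x \in \mathfrak{h} \text{ in some triple}\}
\]
is a bijection followed by an injection, and the target is finite by the eigenvalue bound. Therefore $\mathfrak{n} - \{0\}$ has finitely many $H$-orbits, as claimed. One should check the characteristic hypothesis: the bound on eigenvalues and the representation theory of $\mathfrak{sl}_2$ require $p$ large relative to $N$ (so that $\mathfrak{sl}_2$-representations of the relevant dimensions behave as in characteristic zero), which is consistent with the standing assumption $p > p_0$; alternatively one reduces to the characteristic-zero count via the same spreading-out lemma used in the proof of Lemma \ref{sl_lemma}.

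I expect the main obstacle to be making the eigenvalue bound on $x$ uniform and rigorous in positive characteristic, i.e.\ ensuring that an element completing to an $\mathfrak{sl}_2$-triple inside $\mathfrak{g}\subset\mathfrak{gl}_N$ really does act semisimply with integer eigenvalues bounded by a function of $N$ only. This is where the restriction on $p$ genuinely enters, since in small characteristic $\mathfrak{sl}_2$-modules can fail to be semisimple and the weight combinatorics degenerates; once $p$ is large the standard highest-weight theory applies and the argument goes through exactly as in the classical (Kostant) setting.
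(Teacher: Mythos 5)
Your reduction is the same as the paper's: via Lemma \ref{sl_lemma}(2) and Lemma \ref{conjugate_lemma} you pass to counting $H$-conjugacy classes of the semisimple members $x\in\mathfrak{h}$ of normal $\mathfrak{sl}_2$-triples, and you correctly invoke the integrality and boundedness of the eigenvalues of $\mathrm{ad}\,x$. The gap is in the last step: from ``finitely many $G$-conjugacy classes of such $x$'' you conclude ``a fortiori finitely many $H$-conjugacy classes.'' That implication goes the wrong way. A single semisimple $G$-orbit is a positive-dimensional variety, and its $H$-orbits can a priori form a positive-dimensional family (already for $(\mathrm{GL}_2,T)$ a regular semisimple conjugacy class is two-dimensional while generic $T$-orbits in it are one-dimensional), so finiteness of $G$-classes by itself gives nothing about $H$-classes. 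This is precisely the point the lemma has to address, since the whole statement is about $H$-orbits rather than $G$-orbits.

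The paper closes this gap by working inside $\mathfrak{h}$ from the start: each such $x$ is semisimple and lies in $\mathfrak{h}$, hence is $\mathrm{Ad}(H)$-conjugate (not merely $\mathrm{Ad}(G)$-conjugate) into a fixed Cartan subalgebra $\mathfrak{h}_k$ of $\mathfrak{h}$; extending $\mathfrak{h}_k$ to a Cartan subalgebra $\mathfrak{k}$ of $\mathfrak{g}$ with root system $\Delta$, the representative $y\in\mathfrak{h}_k$ satisfies that each $\phi(y)$, $\phi\in\Delta$, is an integer of absolute value at most $\dim G$, and since $y\in[\mathfrak{g},\mathfrak{g}]\cap\mathfrak{k}$ is determined by these values, the set of possible representatives is a \emph{finite set of elements}, not just finitely many classes. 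Your eigenvalue bound is exactly the right ingredient; you just need to apply it after conjugating into a fixed Cartan of $\mathfrak{h}$ by an element of $H$, rather than passing through $G$-conjugacy classes. Your remarks on the characteristic restriction are consistent with the paper's standing assumption $p>p_0$ and do not affect the issue above.
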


\begin{proof}
Let $X$ be the set of $h \in \mathfrak{h}$ that can be embedded in a normal $\mathfrak{sl}_2$-triples. By the proof of Lemma \ref{sl_lemma}, $H$ orbits in $\mathfrak{n} - 0$ corresponds to $H$-conjugacy classes of normal $\mathfrak{sl}_2$-triples, and $X$ is stable under $H$-conjugation. Therefore, by Lemma \ref{conjugate_lemma} it suffices to show that there is a finite number of $H$-orbits in $X$.

Let $\mathfrak{h}_k$ be a Cartan subalgebra of $\mathfrak{h}$, and $\mathfrak{k}$ be a Cartan subalgebra of $\mathfrak{g}$ containing $\mathfrak{h}_k$. Let $\Delta$ be the set of roots of $\mathfrak{k}$ acting on $\mathfrak{g}$. Since elements $x \in X$ can be embedded in a normal $\mathfrak{sl}_2$-triples, they are semi-simple. So $x$ is $H$-conjugate to some $y \in \mathfrak{h}_k$. The eigenvalues of $\mathrm{ad}x$ are integers with norm $\leq \dim G$, then it follows that $|(y, \phi)|$ is an integer between zero and $\dim G$ for all $\phi \in \Delta$.

Now $y \in [\mathfrak{g}, \mathfrak{g}]$ since $x \in [\mathfrak{g}, \mathfrak{g}]$. But any element in $[\mathfrak{g}, \mathfrak{g}] \cap \mathfrak{k}$ is determined by the values $(y, \phi)$ for all $\phi \in \Delta$, where $|(y, \phi)|$ has finitely many options. So there are only finitely many elements in $[\mathfrak{g}, \mathfrak{g}] \cap \mathfrak{k}$ which is $H$-conjugate to element in $X$. Hence, there are finitely many $H$-orbits in $X$.
\end{proof}

\subsection{Dense Subset of Semi-simple Elements}
We use one of the main results in \cite{Aizenbud2} to find a subset of semi-simple elements in $G^\sigma$.

\begin{theorem}
Let $(G, H, \theta)$ be a symmetric pair. We identify $T^*G$ with $G \times \mathfrak{g}^*$ and let
\[
S \coloneqq \{(g, \alpha) \in G \times \mathfrak{g}^* \mid \alpha \text{ nilpotent }, \langle\alpha, \mathfrak{h}\rangle = 0, \langle\alpha, \mathrm{Ad}(g)\mathfrak{h}\rangle = 0\}.
\]

We have $\dim S = \dim G$.
\end{theorem}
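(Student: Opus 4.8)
The plan is to trivialize $\mathfrak{g}^{*}$ by means of the invariant form $B$ from the lemma above, rewrite $S$ as a concrete incidence variety over nilpotent orbits, and then stratify by the finitely many nilpotent $G$-orbits. Identify $\mathfrak{g}^{*}\cong\mathfrak{g}$ $G$-equivariantly via $B$ and write $A\in\mathfrak{g}$ for the element corresponding to $\alpha$, so that nilpotent functionals correspond to nilpotent elements. Since $\mathfrak{g}=\mathfrak{h}\oplus\mathfrak{g}^{\sigma}$ is $B$-orthogonal and $B$ is $\mathrm{Ad}$-invariant, the condition $\langle\alpha,\mathfrak{h}\rangle=0$ becomes $A\in\mathfrak{g}^{\sigma}$, and $\langle\alpha,\mathrm{Ad}(g)\mathfrak{h}\rangle=0$ becomes $\mathrm{Ad}(g^{-1})A\in\mathfrak{g}^{\sigma}$ (using $B(A,\mathrm{Ad}(g)X)=B(\mathrm{Ad}(g^{-1})A,X)$). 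Hence
\[
S\;\cong\;\bigl\{(g,A)\in G\times\mathfrak{g}\;\big|\;A\text{ nilpotent},\ A\in\mathfrak{g}^{\sigma},\ \mathrm{Ad}(g^{-1})A\in\mathfrak{g}^{\sigma}\bigr\},
\]
and the lower bound $\dim S\geq\dim G$ is immediate from the closed subvariety $G\times\{0\}\subset S$.

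For the upper bound, observe that $\mathrm{Ad}(g^{-1})A$ always lies in the same nilpotent $G$-orbit $\mathcal{O}$ as $A$, so with $S_{\mathcal{O}}\coloneqq\{(g,A)\in S\mid A\in\mathcal{O}\}$ we get a finite decomposition $S=\bigsqcup_{\mathcal{O}}S_{\mathcal{O}}$ over the finitely many nilpotent orbits, and it suffices to bound each $S_{\mathcal{O}}$ with $\mathcal{O}\cap\mathfrak{g}^{\sigma}\neq\emptyset$. I would analyze the projection $\psi\colon S_{\mathcal{O}}\to\mathcal{O}\cap\mathfrak{g}^{\sigma}$, $(g,A)\mapsto A$: its fiber over $A$ is $\{g\in G\mid\mathrm{Ad}(g^{-1})A\in\mathfrak{g}^{\sigma}\}$, which surjects onto $\mathcal{O}\cap\mathfrak{g}^{\sigma}$ via $g\mapsto\mathrm{Ad}(g^{-1})A$ with every fiber a coset of the centralizer $G_{A}$, hence has dimension $\dim(\mathcal{O}\cap\mathfrak{g}^{\sigma})+\dim G-\dim\mathcal{O}$ for every $A$. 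Equidimensionality of the fibers of $\psi$ then gives
\[
\dim S_{\mathcal{O}}\;=\;2\dim(\mathcal{O}\cap\mathfrak{g}^{\sigma})+\dim G-\dim\mathcal{O},
\]
so the whole statement reduces to the bound $\dim(\mathcal{O}\cap\mathfrak{g}^{\sigma})\leq\tfrac12\dim\mathcal{O}$ for every nilpotent $\mathcal{O}$ meeting $\mathfrak{g}^{\sigma}$.

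That bound is the heart of the matter and is where I expect the real work to lie; it is the classical Kostant--Rallis dimension estimate. Since $\mathcal{O}\cap\mathfrak{g}^{\sigma}$ consists of nilpotent elements of $\mathfrak{g}^{\sigma}$, the lemma asserting that there are only finitely many $H$-orbits in $\mathfrak{n}$ shows $\mathcal{O}\cap\mathfrak{g}^{\sigma}$ is a finite union of $H$-orbits $\mathrm{Ad}(H)A$, so it is enough to prove $\dim\mathrm{Ad}(H)A=\tfrac12\dim\mathcal{O}$ for every $A\in\mathcal{O}\cap\mathfrak{g}^{\sigma}$. As $\theta(A)=-A$, the operator $\mathrm{ad}(A)$ carries $\mathfrak{h}$ into $\mathfrak{g}^{\sigma}$ and $\mathfrak{g}^{\sigma}$ into $\mathfrak{h}$; invariance of $B$ gives $B([A,X],Y)=-B(X,[A,Y])$ for $X\in\mathfrak{h}$, $Y\in\mathfrak{g}^{\sigma}$, so $\mathrm{ad}(A)\colon\mathfrak{h}\to\mathfrak{g}^{\sigma}$ and $\mathrm{ad}(A)\colon\mathfrak{g}^{\sigma}\to\mathfrak{h}$ are negative adjoints with respect to the nondegenerate restrictions of $B$ to $\mathfrak{h}$ and to $\mathfrak{g}^{\sigma}$, hence of equal rank: $\dim[\mathfrak{h},A]=\dim[\mathfrak{g}^{\sigma},A]$. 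Because $[\mathfrak{g},A]=[\mathfrak{h},A]\oplus[\mathfrak{g}^{\sigma},A]$ is a direct sum (the summands sit in the complementary subspaces $\mathfrak{g}^{\sigma}$ and $\mathfrak{h}$), we obtain $\dim\mathcal{O}=\dim[\mathfrak{g},A]=2\dim[\mathfrak{h},A]=2\dim\mathrm{Ad}(H)A$, i.e.\ $\dim(\mathcal{O}\cap\mathfrak{g}^{\sigma})=\tfrac12\dim\mathcal{O}$. Substituting back yields $\dim S_{\mathcal{O}}=\dim G$ for every such $\mathcal{O}$, and combined with the lower bound this gives $\dim S=\dim G$. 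The only genuinely nontrivial input is the finiteness of $H$-orbits in $\mathfrak{n}$, used to pass from orbit dimensions to $\dim(\mathcal{O}\cap\mathfrak{g}^{\sigma})$, and that has already been established.
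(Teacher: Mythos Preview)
Your argument is correct and, unlike the paper, actually supplies a proof: the paper gives none, merely citing Theorem~B of \cite{Aizenbud2} and remarking that the argument written over $\mathbb{R}$ carries over to $\bar F$. Your route is the classical Kostant--Rallis computation---trivialize via $B$, stratify by nilpotent $G$-orbits, and reduce to $\dim(\mathcal{O}\cap\mathfrak{g}^{\sigma})=\tfrac12\dim\mathcal{O}$ via the equal-rank argument for $\mathrm{ad}(A)|_{\mathfrak{h}}$ and $\mathrm{ad}(A)|_{\mathfrak{g}^{\sigma}}$ together with the finiteness of $H$-orbits on nilpotents of $\mathfrak{g}^{\sigma}$ already established earlier in the paper---and this is more self-contained than deferring to an external source, and works directly in positive characteristic without any transfer step. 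Two small points are worth tightening in your write-up: the fiber-dimension equality for $\psi^{-1}(A)$ is cleanest once you observe that $g\mapsto\mathrm{Ad}(g^{-1})A$ realizes $G\to\mathcal{O}$ as a principal $G_A$-bundle, so preimages of locally closed subsets automatically have the expected dimension; and you tacitly invoke the finiteness of nilpotent $G$-orbits in $\mathfrak{g}$ (standard in good characteristic, e.g.\ via Jacobson--Morozov), which is a separate fact from the paper's lemma about $H$-orbits in $\mathfrak{g}^{\sigma}$.
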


\begin{proof}
See \cite{Aizenbud2} Theorem B with the same notation. Their result is for groups over real numbers, but the same proof works for algebraic closure of finite fields.
\end{proof}

\begin{corollary}\label{Zariski_open_subset}
Let $U \coloneqq \big\{g \in G \mid S \cap T_g^*G = \{(g, 0)\}\big\}$. Note that $Z$ is Zariski open since $S$ is conic and closed. In fact, $U$ is a Zariski open dense subset of $G$.
\end{corollary}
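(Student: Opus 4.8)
The plan is to deduce the statement almost entirely from the dimension bound $\dim S = \dim G$ of the preceding theorem, exploiting that $S$ is closed and conic in two separate ways. Throughout, write $p \colon S \to G$ for the first projection and $\sigma_0 \coloneqq G \times \{0\} \subset S$ for the zero section (note $(g,0)\in S$ for every $g$, since $0$ is nilpotent and the two pairing conditions vanish). One has the tautological description $U = G \setminus p(S \setminus \sigma_0)$.

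First, openness (which the statement already asserts, but let me recall the mechanism): each defining condition of $S$ — $\alpha$ nilpotent, $\langle\alpha,\mathfrak h\rangle = 0$, $\langle\alpha,\mathrm{Ad}(g)\mathfrak h\rangle = 0$ — is invariant under $\alpha \mapsto t\alpha$, so $S$ is conic, and the scaling $\mathbb{G}_m$-action on $S \setminus \sigma_0$ is free with one-dimensional orbits. Its image in $G \times \mathbb{P}(\mathfrak g^*)$ is a closed subvariety $\mathbb{P}(S)$ whose projection to $G$ is proper (it factors through the projective-bundle projection $G \times \mathbb{P}(\mathfrak g^*) \to G$); hence $p(S \setminus \sigma_0)$ is closed and $U$ is Zariski open.

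Second, density. Since the $\mathbb{G}_m$-quotient $S \setminus \sigma_0 \to \mathbb{P}(S)$ is equidimensional of relative dimension $1$, we get $\dim \mathbb{P}(S) = \dim(S \setminus \sigma_0) - 1 \le \dim S - 1 = \dim G - 1$, the last equality being the content of the previous theorem. Now suppose $U$ were not dense. As $G$ is a group, all of its irreducible components have the same dimension $\dim G$, so there would be a component $G_0$ with $U \cap G_0 = \emptyset$; put $S_0 \coloneqq p^{-1}(G_0)$, which is still conic and, because $\sigma_0 \subseteq S$, satisfies $p(S_0) = G_0$, hence $\dim S_0 = \dim G_0 = \dim G$. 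The same computation yields $\dim \mathbb{P}(S_0) \le \dim G_0 - 1$, while $\mathbb{P}(S_0) \to G_0$ is proper. But $U \cap G_0 = \emptyset$ says exactly that every $g \in G_0$ carries a nonzero $\alpha$ with $(g,\alpha)\in S$, i.e. $\mathbb{P}(S_0) \to G_0$ is surjective, forcing $\dim \mathbb{P}(S_0) \ge \dim G_0$ — a contradiction. Therefore $U$ meets every irreducible component of $G$, and being open it is dense. (If one simply assumes $\mathbf G$ connected, the component bookkeeping is unnecessary: $U\neq\emptyset$ already gives density.)

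I do not anticipate a genuine obstacle: the real work sits in the theorem $\dim S = \dim G$, imported from \cite{Aizenbud2}. The only things needing care are bookkeeping points — using the conic structure correctly to projectivize (so that properness gives openness) and then to convert $\dim S$ into $\dim \mathbb{P}(S)$ via the free $\mathbb{G}_m$-action, and applying the fibre-dimension inequalities (equidimensional quotient; a surjection cannot drop dimension) to the possibly reducible variety $S$ componentwise.
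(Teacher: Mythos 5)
Your argument is correct. Note, though, that the paper does not prove this corollary at all: its ``proof'' is the single line ``See \cite{Aizenbud2} Corollary C with the same notation,'' so there is nothing internal to compare against, and what you have written is essentially the standard deduction that the cited reference itself performs from the dimension bound $\dim S=\dim G$ of the preceding theorem. Your two uses of the conic structure are both sound: projectivizing $S\setminus\sigma_0$ inside $G\times\mathbb{P}(\mathfrak g^*)$ and using properness of the projective-bundle projection gives closedness of $p(S\setminus\sigma_0)$, hence openness of $U$; and the fibre-dimension count $\dim\mathbb{P}(S)\le\dim S-1=\dim G-1$ against surjectivity of $\mathbb{P}(S_0)\to G_0$ gives density. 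The componentwise bookkeeping (all components of an algebraic group have dimension $\dim G$, and $\sigma_0\subset S$ forces $p(S_0)=G_0$) is exactly the care needed when $G$ is disconnected, a point the one-line citation glosses over. The only cosmetic alternative worth mentioning is that for density you could avoid $\mathbb{P}(S)$ entirely: if $U$ were not dense, $p(S\setminus\sigma_0)$ would contain a whole component of $G$, over which the fibres of $S$ are conic and nonzero, hence of dimension at least $1$, giving $\dim S\ge\dim G+1$ directly. Either way, your write-up adds genuine value by making the corollary self-contained where the paper merely defers to \cite{Aizenbud2}.
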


\begin{proof}
See \cite{Aizenbud2} Corollary C with the same notation.
\end{proof}

We need to find out which set $S' \subset T^*G^\sigma$ corresponds to $S$ under the co-differential of the symmetrization map $d^*s: T^*G^\sigma \rightarrow T^*G$.

\begin{lemma}\label{open_subset_ss}
Define $\sigma_x \coloneqq \mathrm{Ad} \circ \sigma$, which is a new anti-involution, and
\begin{align*}
S' \coloneqq& \{(x, \beta) \in G^\sigma \times (\mathfrak{g}^*)^{\sigma_x^*} \mid \beta \text{ nilpotent }, \mathrm{Ad}^*(x)\beta = \beta\}, \\
V \coloneqq& \big\{g \in G \mid S' \cap T_x^*G^\sigma = \{(x, 0)\}\big\}.
\end{align*}
Then $s(U) \subset V$. It is not hard to see that $V$ is consisting of semi-simple elements of $G$, since the nilpotent part in the Jordan decomposition commutes with $x$ and it is fixed by the anti-involution as shown in Proposition \ref{closesd_to_ss}.
\end{lemma}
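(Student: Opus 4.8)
The plan is to reduce the lemma to the linear algebra of the (co)differential of the symmetrization map $s$ together with the invariant form $B$ from the earlier lemma. First I would compute $ds$ at a point $g\in G$: identifying $T_gG$ with $\mathfrak g$ via $\xi\mapsto\frac{d}{dt}\big|_{0}\exp(t\xi)g$, the identity $s(\exp(t\xi)g)=\exp(t\xi)\,s(g)\,\exp(t\,\sigma(\xi))$ gives $ds_g(\xi)=\xi+\sigma_x(\xi)$, where $x=s(g)$ and $\sigma_x=\mathrm{Ad}(x)\circ\sigma$. Since $x\in G^\sigma$ forces $\theta(x)=x^{-1}$, one checks $\sigma_x^2=\mathrm{Ad}\big(x\,\theta(x)\big)=\mathrm{id}$, so (for $\operatorname{char}\neq2$) $\sigma_x$ is an involution of $\mathfrak g$, and in fact $\sigma_x=\mathrm{Ad}(g)\circ\sigma\circ\mathrm{Ad}(g)^{-1}$; hence $T_xG^\sigma=\mathfrak g^{\sigma_x}=\mathrm{Ad}(g)\,\mathfrak g^\sigma$ (consistent with the formula in Theorem \ref{ss_to_closed}), $ds_g$ is twice the projection of $\mathfrak g$ onto $\mathfrak g^{\sigma_x}$, and it surjects onto $T_xG^\sigma$. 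Because $B$ is $\mathrm{Ad}$- and $\theta$-invariant, $\sigma_x$ is $B$-orthogonal, so $\mathfrak g=\mathfrak g^{\sigma_x}\oplus\mathfrak g^{-\sigma_x}$ is a $B$-orthogonal decomposition, $B|_{\mathfrak g^{\sigma_x}}$ is nondegenerate, and under $B$ the dual map $\sigma_x^*$ corresponds to $\sigma_x$ itself. Using $B$ to identify $\mathfrak g\cong\mathfrak g^*$ throughout, this identifies $T^*_xG^\sigma\cong(\mathfrak g^*)^{\sigma_x^*}\cong\mathfrak g^{\sigma_x}$ and shows $d^*s_g\colon T^*_xG^\sigma\to T^*_gG$ is, up to the harmless factor $2$, just the inclusion $\mathfrak g^{\sigma_x}\hookrightarrow\mathfrak g$.

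With these identifications I would rewrite the fibres of $S$ and $S'$ over $g$ and over $x=s(g)$. For $S$: $\langle\alpha,\mathfrak h\rangle=0$ means $a\in\mathfrak h^{\perp_B}=\mathfrak g^\sigma$, while $\langle\alpha,\mathrm{Ad}(g)\mathfrak h\rangle=0$ means $\mathrm{Ad}(g^{-1})a\in\mathfrak g^\sigma$, i.e. $a\in\mathrm{Ad}(g)\mathfrak g^\sigma=\mathfrak g^{\sigma_x}$; so the fibre of $S$ over $g$, transported to $\mathfrak g$ by $B$, is $\{a\in\mathfrak g^\sigma\cap\mathfrak g^{\sigma_x}:a\text{ nilpotent}\}$. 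For $S'$: membership in $(\mathfrak g^*)^{\sigma_x^*}$ means $b\in\mathfrak g^{\sigma_x}$, and $\mathrm{Ad}^*(x)\beta=\beta$ means $\mathrm{Ad}(x)b=b$; for $b\in\mathfrak g^{\sigma_x}$ the two equations $\mathrm{Ad}(x)\sigma(b)=b$ and $\mathrm{Ad}(x)b=b$ together are equivalent to $\sigma(b)=b$, so the fibre of $S'$ over $x=s(g)$ is exactly the same set $\{b\in\mathfrak g^\sigma\cap\mathfrak g^{\sigma_x}:b\text{ nilpotent}\}$, and $d^*s_g$ matches it (rescaled by $2\neq0$) with the fibre of $S$ over $g$. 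Hence $S\cap T^*_gG=\{(g,0)\}$ if and only if $S'\cap T^*_{s(g)}G^\sigma=\{(s(g),0)\}$, which gives $s(U)\subset V$ (in fact $U=s^{-1}(V)$).

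For the last assertion, that $V$ consists of semisimple elements, I would argue by contradiction. Let $x\in V$ and suppose $x=x_sx_u$ with $x_u\neq1$; by uniqueness of the Jordan decomposition and $\sigma(x)=x$ we get $x_s,x_u\in G^\sigma$, exactly as in Proposition \ref{closesd_to_ss}. Applying the logarithm inverse to the exponential map constructed in the proof of Lemma \ref{sl_lemma} (available for $p>p_0$, and $\sigma$-equivariant, intertwining $\mathrm{Ad}$ with conjugation), set $n:=\log(x_u)\in\mathfrak n\setminus\{0\}$; then $\sigma(n)=n$, i.e. $n\in\mathfrak g^\sigma$, and since $x_u$ commutes with $x$ we get $\mathrm{Ad}(x)n=n$, hence $n\in\mathfrak g^{\sigma_x}$ as well. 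Now $\beta:=B(n,-)\in(\mathfrak g^*)^{\sigma_x^*}$ is nilpotent, satisfies $\mathrm{Ad}^*(x)\beta=\beta$, and is nonzero on $T_xG^\sigma=\mathfrak g^{\sigma_x}$ because $B|_{\mathfrak g^{\sigma_x}}$ is nondegenerate and $0\neq n\in\mathfrak g^{\sigma_x}$; thus $(x,\beta)\in S'\cap T^*_xG^\sigma$ with $\beta\neq0$, contradicting $x\in V$. Therefore $x$ is semisimple.

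I do not expect a deep obstacle here: everything is linear-algebra bookkeeping once the trivializations are fixed. The one point that needs real care is the compatibility of the three identifications in play — the left-translation trivializations of $TG$ and of $TG^\sigma$, the form $B$ on $\mathfrak g$ versus its restriction to $\mathfrak g^{\sigma_x}$, and the duality $\sigma_x^*\leftrightarrow\sigma_x$ — together with the observation that $\sigma_x$ is genuinely an involution of $\mathfrak g$ (which uses $\theta(x)=x^{-1}$ for $x\in G^\sigma$ and $\operatorname{char}\neq2$), so that $\mathfrak g^{\sigma_x}$ plays the role of the ``$\mathfrak g^\sigma$-part'' of a symmetric pair centered at $x$. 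The hypothesis $p>p_0$ enters only through the logarithm used in the last step.
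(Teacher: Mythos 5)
Your proposal is correct and follows essentially the same route as the paper: compute $d_gs(\xi)=\xi+\sigma_x(\xi)$ and its codifferential, and match the fibre of $S'$ over $x=s(g)$ with the fibre of $S$ over $g$ (the paper does this by direct pairing computations, you do it by using the invariant form $B$ to identify both fibres with the nilpotent elements of $\mathfrak g^\sigma\cap\mathfrak g^{\sigma_x}$, which in addition yields the converse inclusion $U=s^{-1}(V)$). Your logarithm argument for the semisimplicity of elements of $V$ correctly fills in the assertion that the paper leaves as a remark in the statement.
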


\begin{proof}
In the proof of \ref{closesd_to_ss}, we have seen that
\[
T_xG^\sigma = \{\alpha \in \mathfrak{g} \mid \mathrm{Ad}(x)\sigma(\alpha) = \alpha\} = \mathfrak{g}^{\sigma_x} \Rightarrow T_x^*G^\sigma = (\mathfrak{g}^*)^{\sigma_x^*}.
\]

By easy calculation, we can see that
\begin{align*}
d_gs(\alpha) &= \alpha + \mathrm{Ad}(x)\sigma(\alpha) = \alpha + \sigma_x(\alpha), \\
d_g^*s(\beta) &= \beta + \sigma_x^*(\beta),
\end{align*}

We claim that if $(x, \beta) \in S'$, then $\big(g, d_g^*(\beta)\big) \in S$. From the formula above, we see that on $(\mathfrak{g}^*)^{\sigma_x^*}$, co-differential $d_g^*s(\beta) = 2\beta$, which is again nilpotent. For any $\alpha \in \mathfrak{h}$, we have
\begin{align*}
\langle d_g^*(\beta), \mathrm{Ad}(g)\alpha\rangle &= \langle\beta, d_g(g\alpha g^{-1})\rangle \\
&= \langle\beta, g\alpha g^{-1} + \sigma_x(g\alpha g^{-1})\rangle \\
&= \langle\beta, g\alpha g^{-1} + x\sigma(g^{-1})\sigma(\alpha)\sigma(g)x^{-1})\rangle \\
&= \langle\beta, g\alpha g^{-1} + g\sigma(\alpha)g^{-1})\rangle \\
&= \langle\beta, 0\rangle = 0,
\end{align*}
and since $\mathrm{Ad}^*(x)\beta = \beta$,
\begin{align*}
\langle d_g^*(\beta), \alpha\rangle &= \langle\beta, d_g(\alpha)\rangle \\
&= \langle\beta, \alpha + \sigma_x(\alpha)\rangle \\
&= \langle\beta, \alpha + x\sigma(\alpha)x^{-1}\rangle \\
&= \langle\beta, \alpha - x\alpha x^{-1}\rangle \\
&= -\langle\beta, \mathrm{Ad}(x)(\alpha - x\alpha x^{-1})\rangle \\
&= -\langle\mathrm{Ad}^*(x)(\beta),\alpha - x\alpha x^{-1}\rangle \\
&= -\langle\beta, \alpha - x\alpha x^{-1}\rangle = 0.
\end{align*}

If there is a point $g \in U$ and $x = s(g) \notin V$, then there is $\beta \neq 0$ such that $(x, \beta) \in S' \cap T_x^*G^\sigma$. Now $\big(g, d_g^*(\beta)\big) = (g, 2\beta)\in S$, a contradiction.
\end{proof}

\section{An Extension of Gelfand's Trick}\label{ch_Gel}
To make use of fixed points of anti-involutions, we start by looking a simple case of anti-involution over matrix algebras. Then we extend the same idea to semi-simple algebras.

\subsection{Fixed Points of Anti-automorphism on Matrix Algebras}
\begin{lemma}[Upper Bound of Fixed Points of Anti-automorphism]
Let $M = M_n(\mathbb{C})$ be the matrix algebra of rank $n \geq 2$, and $\sigma: M \rightarrow M$ be an anti-automorphism, then $\dim M^\sigma \leq \frac{n(n + 1)}{2}$. Moreover, equal sign can be achieved when $\sigma$ is taking transpose.
\end{lemma}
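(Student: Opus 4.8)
The plan is to reduce the classification of anti-automorphisms of $M_n(\mathbb{C})$ to linear algebra via the Skolem–Noether theorem, and then compute the dimension of the fixed space directly. First I would observe that if $\sigma$ is any anti-automorphism of $M_n(\mathbb{C})$, then $\sigma \circ (\,\cdot\,)^t$ (composition with transpose) is an \emph{automorphism} of $M_n(\mathbb{C})$, hence by Skolem–Noether it is inner: there is some $g \in \mathrm{GL}_n(\mathbb{C})$ with $\sigma(A) = g A^t g^{-1}$ for all $A \in M_n(\mathbb{C})$. So every anti-automorphism has this shape.

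Next I would pin down the constraint on $g$ coming from $\sigma^2 = \mathrm{id}$ (or more precisely just from $\sigma$ being an anti-automorphism one only needs it to be well-defined; but it is convenient to use that $\sigma$ is an involution as the relevant anti-involutions in the paper are). Computing $\sigma^2(A) = g (g A^t g^{-1})^t g^{-1} = g (g^{-1})^t A g^t g^{-1} = (g g^{-t}) A (g g^{-t})^{-1}$, so $\sigma^2 = \mathrm{id}$ forces $g g^{-t}$ to be a scalar, i.e. $g^t = \lambda g$ for some $\lambda \in \mathbb{C}^\times$; taking transpose again gives $\lambda^2 = 1$, so $g$ is either symmetric or antisymmetric. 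The fixed-point condition $\sigma(A) = A$ becomes $g A^t g^{-1} = A$, i.e. $A^t g = g A$, i.e. $g A$ is symmetric (when $g$ is symmetric) or antisymmetric (when $g$ is antisymmetric). The space of symmetric matrices has dimension $\tfrac{n(n+1)}{2}$ and the space of antisymmetric matrices has dimension $\tfrac{n(n-1)}{2}$; since $A \mapsto gA$ is a linear isomorphism of $M_n(\mathbb{C})$, the fixed space $M^\sigma$ is isomorphic to one of these. Either way $\dim M^\sigma \leq \tfrac{n(n+1)}{2}$, with equality exactly when $g$ is symmetric; taking $g = I$ recovers $\sigma(A) = A^t$, the transpose, realizing the bound.

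I do not expect a serious obstacle here; the only point requiring a little care is whether to allow general anti-automorphisms or only anti-involutions. If $\sigma$ is merely an anti-automorphism (not assumed $\sigma^2=\mathrm{id}$), the Skolem–Noether step still gives $\sigma(A) = gA^t g^{-1}$ for arbitrary invertible $g$, and the fixed-point equation $A^t g = gA$ says $gA$ is fixed by $B \mapsto g^{-t} B^t$... — the cleanest route is then to note $\dim\{A : A^t g = gA\} = \dim\{B : B = g (g^{-1} B^t)\}$ and decompose $g = g_{\mathrm{sym}} + g_{\mathrm{anti}}$; but this does not cleanly split, so it is better (and sufficient for the paper's application, where all the $\sigma$'s arising are involutions) to state the lemma for anti-involutions. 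I would therefore phrase the argument assuming $\sigma^2 = \mathrm{id}$, giving the dichotomy symmetric/antisymmetric $g$ cleanly, and conclude $\dim M^\sigma \in \{\tfrac{n(n-1)}{2}, \tfrac{n(n+1)}{2}\}$, in particular $\leq \tfrac{n(n+1)}{2}$, with the transpose achieving equality. (The hypothesis $n \geq 2$ is only needed so that the two values are distinct; for $n=1$ every $\sigma$ is the identity and $\dim M^\sigma = 1 = \tfrac{n(n+1)}{2}$ anyway.)
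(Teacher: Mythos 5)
Your Skolem--Noether reduction $\sigma(A)=gA^Tg^{-1}$ is exactly the paper's first step, and your analysis of the anti-involution case is correct and clean: $\sigma^2=\mathrm{id}$ forces $g^T=\pm g$, and the fixed space is then $\{A : Ag \text{ symmetric}\}$ or $\{A : Ag \text{ antisymmetric}\}$, of dimension $\tfrac{n(n+1)}{2}$ or $\tfrac{n(n-1)}{2}$. This is, however, precisely the content of the paper's \emph{next} lemma (Lemma \ref{matrix_anti_inv_lemma}); the lemma you were asked to prove is stated for an arbitrary anti-\emph{automorphism}, and you explicitly leave that case open (``this does not cleanly split, so it is better \dots to state the lemma for anti-involutions''). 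That is a genuine gap, not a cosmetic one: for a general anti-automorphism $g$ is an arbitrary element of $\mathrm{GL}_n(\mathbb{C})$, $gg^{-T}$ need not be scalar, and the fixed-point condition $Ag=gA^T$ is not equivalent to (anti)symmetry of $Ag$. The generality is also used downstream: Lemma \ref{multi_one_lemma} is stated for an anti-automorphism $\sigma$ of a semi-simple algebra, and even when one starts from an anti-involution of $\prod_i M_{n_i}(\mathbb{C})$ the paper's proof of that lemma invokes the present bound for the restriction to a $\sigma$-stable factor, which a priori is only an anti-automorphism unless one tracks the involution hypothesis through the whole chain.

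The paper closes the general case by brute force: writing the fixed-point condition as the linear system $h^TX=hX^T$ in the $n^2$ entries of $X$ and exhibiting, column by column and by induction on a block-triangular structure, at least $\tfrac{n(n-1)}{2}$ linearly independent equations, whence $\dim M^\sigma\le n^2-\tfrac{n(n-1)}{2}=\tfrac{n(n+1)}{2}$. A partial shortcut in the spirit of your argument: transposing $Ag=gA^T$ gives $Ag^T=g^TA^T$, hence $Ag_s=g_sA^T$ and $Ag_a=g_aA^T$ for the symmetric and antisymmetric parts $g_s,g_a$ of $g$; if either part is invertible the bound follows at once from your dichotomy. But both parts can be singular while $g$ is invertible (e.g.\ $g=\left(\begin{smallmatrix}0&1&0\\-1&0&0\\0&0&1\end{smallmatrix}\right)$ plus a suitable perturbation for $n=3$), so some additional argument --- the paper's rank count, or an equivalent --- is still required. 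To repair your proposal you should either supply such an argument for arbitrary invertible $g$, or restate this lemma and Lemma \ref{multi_one_lemma} consistently for anti-involutions and verify that every application in \S\ref{ch_proof} only ever produces anti-involutions (which is true for the Hecke algebra $\mathbb{C}[Z]$, but must be said).
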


\begin{proof}
Claim: $\sigma(A) = gA^Tg^{-1}$ for some $g \in \mathrm{GL}_n(\mathbb{C})$.

Since $\sigma$ is an anti-automorphism, $A \mapsto \sigma(A^T)$ is an automorphism of the matrix algebra $M = M_n(\mathbb{C})$:
\[
\sigma(A^T)\sigma(B^T) = \sigma(B^TA^T) = \sigma\big((AB)^T\big).
\]

By Skolem-Noether theorem, such automorphism is an inner automorphism, i.e. conjugation by an invertible matrix. Therefore,

\[
\sigma(A^T) = gAg^{-1} \Rightarrow \sigma(A) = gA^Tg^{-1}.
\]
Now we see that
\[
M^\sigma = \{A \in M \mid A = gA^Tg^{-1}\} = \{A \in M \mid Ag = gA^T\},
\]
for some $g \in \mathrm{GL}_n(\mathbb{C})$. In addition,
\[
Ag = gA^T \Leftrightarrow g^TA^T = Ag^T \Leftrightarrow A^T = (g^T)^{-1}Ag^T.
\]
If we denote $X = A g^T$, then $X^T = gA^T$, and
\[
(g^T)^{-1}X = A^T = g^{-1}X^T.
\]
Therefore, if we denote $h = g^{-1}$, then we have
\[
M^\sigma = \{A \in M \mid h^TX = hX^T, X = Ag^T\} = \{X \in M \mid h^TX = hX^T\}(g^T)^{-1}.
\]
Now we claim that for any $h \in \mathrm{GL}_n(\mathbb{C})$,
\[
\dim \{X \in M \mid h^TX = hX^T\} \leq \frac{n(n + 1)}{2}.
\]
Let us denote $h = (h_{ij})$ and $X = (x_{ij})$ as matrix entries. Then the equation $h^TX = hX^T$ is in fact a linear system of $n^2$ equations in terms of $n^2$ variables $x_{ij}$ with coefficients coming from $h_{ij}$. We label these $n^2$ equations by there corresponding position in the resulting matrix, i.e. $h^TX = hX^T \rightarrow (Eq_{ij})$ where $Eq_{ij}$ represents the $ij$-th equation.

We first look at the first column $Eq_{i1}$: on the LHS is the $i$-th column of $h$ multiplying the first column of $X$, while on the RHS is the $i$-th row of $h$ multiplying the first row of $X$,

\[
Eq_{i1}: \sum_{j = 1}^nh_{ji}x_{j1} = \sum_{j = 1}^nh_{ij}x_{1j}. \quad (i = 1, 2, \cdots, n)
\]

These $n$ equations only involve $(2n - 1)$variables $x_{1j}$ and $x_{j1}$, and only $x_{11}$ appears on both sides of the equations. Rearranging the equations gives us:

\[
Eq_{i1}: (h_{1i} - h_{i1})x_{11} + \sum_{j = 2}^nh_{ji}x_{j1} - \sum_{j = 2}^nh_{ij}x_{1j} = 0. \quad (i = 1, 2, \cdots, n)
\]
If we rewrite it in matrix form, it will look like
\[
\begin{bmatrix}
(h_{1i} - h_{i1}) & h_{ji} & -h_{ij} \end{bmatrix}\begin{bmatrix}
x_{11} \\ x_{j1} \\ x_{1j}
\end{bmatrix} = 0, \quad i \geq 1, j \geq 2.
\]

Observe that the coefficients of $x_{12}, \cdots, x_{1n}$ comes from columns of $h$ except the first column, i.e. $(-h_{ij}), j \neq 1$, meaning that these $(n - 1)$ columns are linearly independent, and so the matrix corresponding to $Eq_{i1}$'s is of rank at least $(n - 1)$, i.e. at least $(n - 1)$ linearly independent equations in these $n$ equations.

Next, we look at $Eq_{i2}$, $i = 2, \cdots, n$. We introduce $(n - 1)$ new equations and $2n - 3$ variables $x_{2j}$ and $x_{j2}$ with $j \geq 2$.

\[
Eq_{i2}: \sum_{j = 1}^nh_{ji}x_{j2} = \sum_{j = 1}^nh_{ij}x_{2j}. \quad (i = 2, 3, \cdots, n)
\]
Rearranging the equations gives us:

\[
Eq_{i2}: (h_{2i} - h_{i2})x_{22} + \sum_{k \neq 2}^nh_{ki}x_{k2} - \sum_{k \neq 2}^nh_{ik}x_{2k} = 0. \quad (i = 2, 3, \cdots, n)
\]
If we rewrite it in matrix form, it will look like

\[
\begin{bmatrix}
(h_{1i} - h_{i1}) & h_{ji} & -h_{ij} & 0 & 0 & 0 \\
0 & h_{1i} & -h_{i1} & h_{2i} - h_{i_2} & h_{ki} & -h_{ik}\end{bmatrix}\begin{bmatrix}
x_{11} \\ x_{j1} \\ x_{1j} \\ x_{22} \\ x_{k2} \\ x_{2k}
\end{bmatrix} = 0, \quad i \geq 1, j \geq 2, k \geq 3.
\]

Now we focus on the coefficients of $x_{1j}$ and $x_{2k}$, we have a matrix $\begin{bmatrix}
-h_{ij} & 0 \\ -h_{i1} & -h_{ik}
\end{bmatrix}$ of rank at least $2n - 3$, since

\[
Rank\begin{bmatrix}
A & 0 \\ B & C
\end{bmatrix} \geq Rank(A) + Rank(C).
\]

From the above two steps and by induction, one can easily see that by looking at the equations $Eq_{ij}, i \geq j$, i.e. the lower triangle of the resulting matrix, we can find at least $\frac{n(n - 1)}{2}$ linearly independent equations among them. Hence,
\[
\dim M^\sigma = \dim \{X \in M \mid h^TX = hX^T\} \leq n^2 - \frac{n(n - 1)}{2} = \frac{n(n + 1)}{2}.
\]
\end{proof}

In the case of anti-involution, the situation is much simpler since where are not many possible anti-involutions on matrix algebras.

\begin{lemma}\label{matrix_anti_inv_lemma}
Let $M = M_n(\mathbb{C})$ be the matrix algebra of rank $n \geq 2$, and $\sigma: M \rightarrow M$ be an anti-involution, then $\dim M^\sigma = \frac{n(n + 1)}{2}$ or $\frac{n(n - 1)}{2}$ which corresponds to symmetric matrices or skew-symmetric matrices.
\end{lemma}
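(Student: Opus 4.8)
The plan is to combine the classification of anti-involutions on $M_n(\mathbb{C})$ up to equivalence with a direct dimension count. By the Skolem--Noether argument already carried out in the previous lemma, any anti-automorphism $\sigma$ has the form $\sigma(A) = g A^T g^{-1}$ for some $g \in \mathrm{GL}_n(\mathbb{C})$. The first step is to impose the condition $\sigma^2 = \mathrm{id}$: computing $\sigma^2(A) = g (g^{-1})^T A g^T g^{-1}$, the requirement $\sigma^2 = \mathrm{id}$ forces $g^T g^{-1}$ to be central, i.e. $g^T = \lambda g$ for some scalar $\lambda \in \mathbb{C}^\times$. Transposing again gives $g = \lambda g^T = \lambda^2 g$, so $\lambda = \pm 1$. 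Hence $g$ is either symmetric ($\lambda = 1$) or skew-symmetric ($\lambda = -1$), and in the skew case $n$ must be even for $g$ to be invertible.

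Next I would identify $M^\sigma$ in each case. Writing $M^\sigma = \{A \mid Ag = gA^T\}$ as in the previous lemma, substitute $X = Ag^{-1}$ (note $g$ is invertible, so this is a linear isomorphism $M \to M$, $A \mapsto X$). Then $Ag = gA^T$ becomes $Xg \cdot g^{-1} \cdot g = g (g^{-1})^T X^T$, which after using $g^T = \lambda g$ simplifies to the condition that $X$ is symmetric when $\lambda = 1$ and skew-symmetric when $\lambda = -1$. So $M^\sigma$ is linearly isomorphic to the space of symmetric (resp. skew-symmetric) $n \times n$ matrices, of dimension $\tfrac{n(n+1)}{2}$ (resp. $\tfrac{n(n-1)}{2}$). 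Conversely both possibilities occur: $g = I$ gives transpose, whose fixed space is the symmetric matrices; a standard symplectic $g$ (for $n$ even) gives the other.

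The main obstacle — really the only subtle point — is pinning down the scalar ambiguity: the matrix $g$ in the Skolem--Noether representation is only determined up to a scalar, so one must check that the dichotomy $g^T = \pm g$ is genuinely a property of $\sigma$ and not an artifact of the choice of $g$. Replacing $g$ by $cg$ replaces $\lambda$ by $\lambda$ (since $(cg)^T = c g^T = c\lambda g = \lambda (cg)$), so $\lambda$ is in fact well-defined, and the two cases are mutually exclusive: a matrix cannot be both symmetric and skew-symmetric in characteristic $0$ unless it is $0$, which is not invertible. This confirms that exactly one of the two dimension values occurs for each anti-involution, completing the proof.
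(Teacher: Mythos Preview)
Your proof is correct and follows the same route as the paper: Skolem--Noether gives $\sigma(A)=gA^{T}g^{-1}$, the involution condition forces $g^{T}=\pm g$, and $M^{\sigma}$ is then identified with the symmetric or skew-symmetric matrices. The only slip is the substitution step: with $X=Ag^{-1}$ the algebra does not reduce to $X=\pm X^{T}$ as you wrote; taking instead $Y=Ag$ gives $Y^{T}=g^{T}A^{T}=\lambda\, gA^{T}=\lambda Y$ directly from the fixed-point equation $Ag=gA^{T}$, which cleanly yields the identification (the paper itself simply asserts ``same size as all symmetric matrices'' without spelling out a substitution).
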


\begin{proof}
We have seen that $\sigma(A) = gA^Tg^{-1}$ for some $g \in \mathrm{GL}_n(\mathbb{C})$, then

\[
A = \sigma^2(A) = g(gA^Tg^{-1})^Tg^{-1} = g(g^T)^{-1}A^Tg^Tg^{-1} = g(g^T)^{-1}A^T\big(g(g^T)^{-1}\big)^{-1}, \forall A.
\]

This means that $g(g^T)^{-1}$ commutes with all $A$, so it is a scaler matrix, i.e.

\[
g = cg^T = c^2g \Longrightarrow c = \pm 1.
\]

If $c = 1$, then $M^\sigma = \{A \in \mathrm{GL}_n(\mathbb{C}) \mid gA^T = Ag^T\}$, same size as all symmetric matrices with dimension $\frac{n(n + 1)}{2}$; if $c = -1$, then $M^\sigma$ is same size as all skew-symmetric matrices with dimension $\frac{n(n - 1)}{2}$.
\end{proof}

\subsection{Fixed Points of Anti-involution on Semi-simple Algebras}

Since semi-simple algebras are direct sum of matrix algebras, points not fixed by an anti-involution only comes from matrix algebra of rank $\geq 2$. Therefore, we can have an estimate of the number of rank-one matrix algebras using the dimension of fixed points of an anti-involution.

\begin{lemma}[Lower Bound of Rank-one Matrix Algebras]\label{multi_one_lemma}
Suppose $A$ is a semi-simple algebra over $\mathbb{C}$, and $\sigma: A \rightarrow A$ is an anti-automorphism with fixed points $A^\sigma$ of co-dimension $L = \epsilon\dim A$, then $A \cong \prod M_{n_i}(\mathbb{C})$, and with the number of rank $1$ matrix algebras $\big|\{n_i: n_i  = 1\}\big| \geq (1 - 4\epsilon)\dim A$.
\end{lemma}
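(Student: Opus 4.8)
The plan is to decompose $A \cong \prod_{i} M_{n_i}(\mathbb{C})$ as a product of simple factors and track how the anti-automorphism $\sigma$ permutes these factors. Since $\sigma$ is an anti-automorphism, it sends ideals to ideals, hence induces a permutation (of order dividing $2$) on the set of simple factors. There are two kinds of orbits: fixed factors, on which $\sigma$ restricts to an anti-automorphism of a single $M_{n_i}(\mathbb{C})$; and transposed pairs $M_{n_i}(\mathbb{C}) \times M_{n_i}(\mathbb{C})$ swapped by $\sigma$ (the two factors must have equal rank, since $\sigma^2 = \mathrm{id}$ on each factor gives an isomorphism $M_{n_i} \cong M_{n_j}$, so $n_i = n_j$). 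On a swapped pair of total dimension $2n^2$, the fixed subspace is the antidiagonal $\{(X, \sigma(X))\}$, which has dimension $n^2$, contributing codimension $n^2$. On a fixed factor $M_n(\mathbb{C})$ with $n \geq 2$, Lemma~\ref{matrix_anti_inv_lemma} applies only when $\sigma$ is an anti-involution; for a general anti-automorphism one uses the Upper Bound lemma, giving $\dim M_n^\sigma \leq \frac{n(n+1)}{2}$, hence codimension at least $\frac{n(n-1)}{2} \geq 1$ for $n \geq 2$. Rank-one fixed factors contribute codimension $0$.

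Next I would set up the counting inequality. Let $r$ be the number of rank-one fixed factors, and group the remaining factors into the "bad" set $B$ consisting of fixed factors of rank $\geq 2$ together with the swapped pairs. Write $\dim A = r + \sum_{j \in B} d_j$ where $d_j$ is the dimension of the $j$-th bad block, and $L = \mathrm{codim}\, A^\sigma = \sum_{j \in B} c_j$ where $c_j$ is the codimension contributed by block $j$. The key quantitative input is that within each bad block the codimension $c_j$ is at least a fixed fraction of the block dimension $d_j$: for a swapped pair, $c_j = n^2$ and $d_j = 2n^2$, so $c_j = \frac{1}{2} d_j$; for a fixed factor of rank $n \geq 2$, $c_j \geq \frac{n(n-1)}{2}$ and $d_j = n^2$, so $c_j/d_j \geq \frac{n-1}{2n} \geq \frac{1}{4}$ (the worst case being $n = 2$). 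Hence in all cases $c_j \geq \frac{1}{4} d_j$, so
\[
L = \sum_{j \in B} c_j \geq \frac{1}{4}\sum_{j \in B} d_j = \frac{1}{4}(\dim A - r).
\]

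Rearranging, $r \geq \dim A - 4L = \dim A - 4\epsilon\dim A = (1 - 4\epsilon)\dim A$, which is exactly the claimed bound on the number of rank-one matrix algebras. The main thing to be careful about is the bookkeeping for the swapped pairs: one must verify that $\sigma$ genuinely forces the two swapped factors to have the same rank (so that the antidiagonal description of the fixed space is correct and has dimension exactly half the block), and that a swapped pair contributes nothing spurious to $r$. The constant $\frac{1}{4}$ is sharp and is realized precisely by blocks that are either a single $M_2(\mathbb{C})$ with the skew anti-involution or a swapped pair of rank-one factors; no genuine obstacle arises beyond confirming these edge cases, since everything reduces to the two matrix-algebra lemmas already established.
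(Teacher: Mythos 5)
Your strategy is the same as the paper's: decompose $A$ by Artin--Wedderburn, observe that $\sigma$ permutes the simple factors, bound the codimension contributed by each orbit of factors from below by $\tfrac14$ of that orbit's dimension, and rearrange to get $r \geq \dim A - 4L$. The fixed-factor estimate via the upper-bound lemma ($\dim M_n^{\sigma}\leq \tfrac{n(n+1)}{2}$, so codimension at least $\tfrac{n(n-1)}{2}\geq \tfrac{n^2}{4}$ for $n\geq 2$) is exactly the paper's.

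There is, however, one genuine gap for the lemma \emph{as stated}. The hypothesis is only that $\sigma$ is an anti-automorphism, not an anti-involution, so your claim that the induced permutation of the simple factors has order dividing $2$ is unjustified; indeed your justification explicitly invokes $\sigma^2 = \mathrm{id}$, which is not assumed (already on $A=\mathbb{C}^3$ a cyclic shift of coordinates is an anti-automorphism inducing a $3$-cycle). The paper treats the general case: an orbit is a cycle $A_i\to A_{i+1}\to\cdots\to A_j\to A_i$ of factors of equal rank $n$, and a $\sigma$-fixed point of such a block is determined by its first coordinate (with the extra constraint $\sigma^{m}(X)=X$), so the fixed subspace has dimension at most $n^2$, i.e.\ at most $\tfrac1m$ of the block dimension for a cycle of length $m\geq 2$, giving codimension at least $\tfrac12$ of the block --- the same estimate you obtain for $m=2$. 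So the constant $\tfrac14$ and the conclusion survive, but your case analysis as written does not cover all anti-automorphisms; note the same caveat applies to your ``antidiagonal'' description of the fixed space of a swapped pair, which in general is $\{(X,\sigma(X)) : \sigma^2(X)=X\}$ and only satisfies $\dim\leq n^2$ (which is all you need). In the paper's application $\sigma$ is an anti-involution, so your version suffices there. A small side slip: the ratio $\tfrac14$ is attained by a fixed $M_2(\mathbb{C})$ with the \emph{symmetric} (transpose-type) anti-involution, whose fixed space has dimension $3$ and codimension $1$; the skew type gives codimension $3$ and ratio $\tfrac34$, and a swapped pair of rank-one factors gives ratio $\tfrac12$, not $\tfrac14$.
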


\begin{proof}
First by Artin-Wedderburn theorem, $A$ decomposes into a product of simple algebras, and since the only division algebra over $\mathbb{C}$ is $\mathbb{C}$ itself,

\[
A \cong \prod A_i, \quad A_i \cong M_{n_i}(D_i) = M_{n_i}(\mathbb{C}).
\]

Define $\varphi: A \rightarrow A$ by $\sigma$ following by taking transpose in each matrix algebra. Then $\varphi$ is now an automorphism. Since $A_i$ is an ideal of $A$, $\varphi(A_i)$ is an ideal of $A$ as well, then $\varphi(A_i) \cap A_j$ will be an ideal of $A_j$ which is a simple algebra. So,
\[
\varphi(A_i) \cap A_j = 0 \text{ or } A_j.
\]
So the image $\varphi(A_i)$ must be isomorphic to a matrix algebra of the same rank. Therefore, $\varphi$ factors to each component of matrix algebras with possibly a rearrange of indices. Similarly, since $\varphi$ comes from $\sigma$ and taking transpose, $\sigma$ factors to each component of matrix algebras with possibly a rearrange of indices, i.e. we have two possible situations:
\begin{itemize}
\item either $\sigma(A_i) = A_i$, and the restriction $\sigma_i: A_i \rightarrow A_i$ is again an anti-automorphism. By the previous lemma, we see that if $n_i \geq 2$, then
    \[
    \dim A^{\sigma_i} \leq \frac{n_i(n_i + 1)}{2}.
    \]

\item or with a possible rearrange of indices
    \[
    \forall i \leq k \leq j - 1, \sigma(A_k) = A_{k + 1}, \sigma(A_j) = A_i, n_k = n_{k + 1}.
    \]

    Abusing notation, we denote the restriction to the product of these algebras by $\sigma_i: \prod_{k = i}^jA_k \rightarrow \prod_{k = i}^jA_k$. In this case, if $(X_k)_{k = i}^j$ is a fixed point then $\sigma(X_k) = X_{k + 1}$, i.e. this point is determined by its first component. Therefore,
    \[
    \dim A^{\sigma_i} = n_i^2 = \frac{1}{|j - i + 1|}\dim\big(\prod_{k = i}^jA_k\big) \leq \frac{1}{2}\dim\big(\prod_{k = i}^jA_k\big)
    \]
\end{itemize}

All in all, co-dimension of $A^\sigma$ can only come from those with $n_i \geq 2$, and from both inequalities above, we see that there is a lower bound for the codimension:
\[
L \geq \sum_{n_i \geq 2}\frac{n_i(n_i - 1)}{2} \geq \frac{1}{4}\sum_{n_i \geq 2}n_i^2.
\]
Since $\dim A = \sum_{i}n_i^2$, so we have
\[
\sum_{n_i: n_i  = 1}1 = \dim A - \sum_{n_i \geq 2}n_i^2 \geq \dim A - 4L = (1 - 4\epsilon)\dim A.
\]
\end{proof}

Because of Lemma \ref{matrix_anti_inv_lemma}, we can also have an estimate of the number of high-rank matrix algebras using the dimension of fixed points of an anti-involution.

\begin{corollary}
Suppose $A$ is a semi-simple algebra over $\mathbb{C}$, and $\sigma: A \rightarrow A$ is an anti-involution with fixed points $A^\sigma$ dimension $L = \epsilon\dim A (\geq \frac{1}{4}\dim A)$, then $A \cong \prod M_{n_i}(\mathbb{C})$, and with total dimension of at least rank $k (> 2)$ matrix algebras $\sum_{n_i: n_i  \geq k}n_i^2 \leq \frac{\epsilon - \frac{1}{4}}{\frac{1}{4} - \frac{1}{2k}}\dim A$.
\end{corollary}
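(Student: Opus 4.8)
The plan is to reuse the Artin--Wedderburn decomposition and the $\sigma$-permutation analysis from the proof of Lemma \ref{multi_one_lemma}, but to bookkeep the quantity $\dim A^\sigma - \tfrac14\dim A$ block by block instead of just the codimension. First I would write $A \cong \prod_i M_{n_i}(\mathbb{C})$ and, exactly as in Lemma \ref{multi_one_lemma} (via Skolem--Noether applied to $\varphi = \sigma$ followed by transpose), note that $\sigma$ permutes the simple factors; since now $\sigma^2 = \mathrm{id}$, the induced permutation squares to the identity, so its orbits have size $1$ or $2$. Thus $A$ decomposes into \emph{blocks} $B$ which are either a single $\sigma$-stable factor $M_n(\mathbb{C})$ or a $\sigma$-swapped pair $M_n(\mathbb{C}) \times M_n(\mathbb{C})$ (the two factors being anti-isomorphic matrix algebras over $\mathbb{C}$, hence of equal rank). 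Since each $B$ is $\sigma$-stable, $A^\sigma = \bigoplus_B (A^\sigma \cap B)$, so it suffices to control each summand.

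The two elementary inputs are: (i) for every block $B$ one has $\dim(A^\sigma\cap B) \geq \tfrac14\dim B$; and (ii) for every block $B$ of rank $n \geq k$ one has the stronger estimate $\dim(A^\sigma\cap B) - \tfrac14\dim B \geq \bigl(\tfrac14 - \tfrac1{2k}\bigr)\dim B$. For a swapped pair $M_n \times M_n$ the fixed subspace is the graph of the gluing anti-isomorphism, of dimension $n^2 = \tfrac12\dim B$, and $\tfrac12 - \tfrac14 = \tfrac14 \geq \tfrac14 - \tfrac1{2k}$, which settles both (i) and (ii) for such blocks. For a $\sigma$-stable factor $M_n(\mathbb{C})$ (the case $n=1$ being trivial), Lemma \ref{matrix_anti_inv_lemma} gives $\dim(A^\sigma\cap B) \in \{\tfrac{n(n+1)}{2}, \tfrac{n(n-1)}{2}\}$; subtracting $\tfrac14 n^2$ leaves $\tfrac{n(n+2)}{4} = \bigl(\tfrac14 + \tfrac1{2n}\bigr)n^2$ in the symmetric case and $\tfrac{n(n-2)}{4} = \bigl(\tfrac14 - \tfrac1{2n}\bigr)n^2$ in the skew case, both $\geq 0$ for $n\geq 2$, which gives (i); and when $n \geq k$ both quantities are $\geq \bigl(\tfrac14 - \tfrac1{2k}\bigr)n^2$, using for the skew case that $t \mapsto \tfrac14 - \tfrac1{2t}$ is increasing.

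Finally I would sum over all blocks: with $L = \dim A^\sigma$ and $D = \dim A$,
\[
L - \tfrac14 D \;=\; \sum_{B}\Bigl(\dim(A^\sigma\cap B) - \tfrac14\dim B\Bigr) \;\geq\; \sum_{\mathrm{rank}(B)\geq k}\Bigl(\dim(A^\sigma\cap B) - \tfrac14\dim B\Bigr) \;\geq\; \Bigl(\tfrac14 - \tfrac1{2k}\Bigr)\sum_{n_i\geq k} n_i^2,
\]
where the first inequality drops the remaining blocks of rank $< k$, all non-negative by (i), and the second applies (ii) termwise (noting a swapped pair of rank $n\geq k$ contributes $2n^2$ both to $\dim B$ and to $\sum n_i^2$). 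Since $L = \epsilon D$, rearranging yields $\sum_{n_i\geq k} n_i^2 \leq \dfrac{\epsilon - 1/4}{1/4 - 1/(2k)}\,D$, as claimed; here $\tfrac14 - \tfrac1{2k} > 0$ because $k > 2$, and the hypothesis $\epsilon \geq \tfrac14$ — which is in fact forced by (i) — makes the bound non-trivial. The one place that needs care is the skew-symmetric stable block: it realizes the minimal fixed-point fraction $\tfrac14$ and is the reason the constant $\tfrac14$ appears at all, so the sharper inequality (ii) for it is the crux — and it survives precisely because such a block has rank $\geq k$. Everything else is routine arithmetic.
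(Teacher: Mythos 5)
Your proposal is correct and follows essentially the same route as the paper's proof: the same Artin--Wedderburn decomposition, the same case split into $\sigma$-stable factors (handled via Lemma \ref{matrix_anti_inv_lemma}) versus $\sigma$-swapped pairs, and the same two bounds ($\geq \tfrac14$ for all blocks, $\geq \tfrac12-\tfrac1{2k}$ for blocks of rank $\geq k$) followed by the identical rearrangement. Your per-block bookkeeping of $\dim(A^\sigma\cap B)-\tfrac14\dim B$ is just a repackaging of the paper's ratio $\dim A^{\sigma_i}/\dim A_i$, and your remark that $\epsilon\geq\tfrac14$ is automatic is a nice observation the paper leaves implicit.
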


\begin{proof}
Similar to the proof before, $\sigma$ factors to each component of matrix algebras with possibly a rearrange of indices,
\[
A \cong \prod A_i, \quad A_i \cong M_{n_i}(D_i) = M_{n_i}(\mathbb{C}).
\]

\begin{itemize}
\item Either $\sigma(A_i) = A_i$, and the restriction $\sigma_i: A_i \rightarrow A_i$ is again an anti-involution. Then
    \[
    \frac{n_i(n_i + 1)}{2} \geq \dim A^{\sigma_i} \geq \frac{n_i(n_i - 1)}{2}.
    \]

\item or there is a switch of two matrix algebras of the same dimension $\sigma(A_i) = A_j$, with $n_i = n_j$. Abusing notation, we denote the restriction to the product of these two algebras by $\sigma_i: A_i \times A_j \rightarrow A_i \times A_j$. In this case, $(X, Y)$ is a fixed point iff $\sigma(X) = Y$, i.e. this point is determined by its first component. Therefore,
    \[
    \dim A^{\sigma_i} = n_i^2 = \frac{1}{2}\dim (A_i \times A_j)
    \]
\end{itemize}

Now we look at matrices of different ranks:
\begin{enumerate}
\item For those matrix algebras with $n_i \geq k$, and from both inequalities above, we see that:
    \[
    \frac{\dim A^{\sigma_i}}{\dim A_i} \geq \frac{n_i - 1}{2n_i} \geq \frac{1}{2} - \frac{1}{2k}.
    \]
\item For those matrix algebras with $n_i < k$, and from inequalities we used in the proof of previous lemma, we see that:
    \[
    \frac{\dim A^{\sigma_i}}{\dim A_i} \geq \frac{n_i - 1}{2n_i} \geq \frac{1}{4}.
    \]
\end{enumerate}

Combining these two cases, we see that:

\begin{align*}
L = \epsilon\dim A &= \sum_{n_i \geq k}\dim A_i\frac{\dim A^{\sigma_i}}{\dim A_i} + \sum_{n_i < k}\dim A_i\frac{\dim A^{\sigma_i}}{\dim A_i} \\ &\geq \sum_{n_i \geq k}\dim A_i(\frac{1}{2} - \frac{1}{2k}) + \frac{1}{4}\sum_{n_i < k}\dim A_i \\ &= \sum_{n_i \geq k}\dim A_i(\frac{1}{2} - \frac{1}{2k}) + \frac{1}{4}(\dim A - \sum_{n_i \geq k}\dim A_i) \\
\Longrightarrow (\epsilon - \frac{1}{4})\dim A &\geq \sum_{n_i \geq k}\dim A_i(\frac{1}{4} - \frac{1}{2k}) \\
\Longrightarrow \sum_{n_i \geq k}\dim A_i &\leq \frac{\epsilon - \frac{1}{4}}{\frac{1}{4} - \frac{1}{2k}}\dim A.
\end{align*}
\end{proof}

\section{Proof of Main Theorem}\label{ch_proof}
Now that we have developed all the tools we need, let us prove our main results.

\begin{proof}[Proof of \ref{dim_thm}]
First we observe that
\[
(H \times H)_g \cong H_x.
\]

By Corollary \ref{closed_orb}, for any $g \in G(F)$ such that $HgH$ a closed orbit in $G$, $H(F)gH(F)$ is fixed by $\sigma$. Next, we count the number of such $H(F)gH(F)$ orbits. Under the symmetrization map, closed $H \times H$-orbits correspond to closed $H$-orbits in $G^\sigma$. Moveover, the symmetrization map induced an injection which by Proposition \ref{closesd_to_ss} maps closed orbits to closed orbits:
\[
G/H \xhookrightarrow{s} G^\sigma.
\]

Also, by Proposition \ref{closesd_to_ss} and Theorem \ref{ss_to_closed}, we see that closed $H$-orbits correspond to semi-simples $x \in G^\sigma$. From Corollary \ref{open_subset_ss}, we see that there is a subset $V \in G^\sigma$ of semi-simple elements. Suppose we start with an element $g \in U$ the open dense subset of $G$ in Corollary \ref{Zariski_open_subset}, then $s(g) \in V$ is semi-simple, and so $\mathrm{Ad}(H)s(g)$ is a closed orbit. Therefore, $HgH$ is a closed orbit.
\begin{figure}[htp]
\centering
\tikzset{node distance = 4cm, auto}
\begin{tikzpicture}
    \node (A) {$H \times H$-orbits in $G$};
    \node (B) [above of=A] {$g \in U$};
    \node (C) [right of=A] {$H$-orbits in $G^\sigma$};
    \node (D) [above of=C] {$s(g) \in V$ s.s.};
    \draw[right hook-latex] (A) -- (C);
    \draw[->] (B) to node [swap]{$HgH$}(A);
    \draw[->] (D) to node {$\mathrm{Ad}(H)s(g)$} (C);
    \draw[->] (B) to node {$s$} (D);
\end{tikzpicture}
\end{figure}

Now we need to consider similar situation over $F$. Suppose $g \in U(F)$ which is an open dense subset of $G(F)$, then $H(F)gH(F)$ is fixed by $\sigma$. We can have a refined result as in Theorem \ref{finite_to_one} to deal with orbits in $U(F)^c$.

Firstly, we claim that for any $g \in U(F)$ the fibre of $\pi_{G(F)}$ contains only one orbit, which is the unique closed orbit $H(F)gH(F)$. Suppose there is another orbit $H(F)g'H(F)$ in the fibre, then by the definition of categorical quotient, $\pi_{G(F)}(g) = \pi_{G(F)}(g')$ if and only if the closures of the orbits meet. However, $H(F)g'H(F) \subset U(F)^c$ which is a closed set, while $H(F)gH(F) \subset U(F)$, so their orbit closure cannot meet.

Secondly, we claim that same result of Theorem \ref{finite_to_one} happens over $F$, i.e. every fibre of the quotient map $\pi_{G(F)}$ contains finitely many $H(F) \times H(F)$-orbits. Suppose $\pi_{G(F)}(g) = \pi_{G(F)}(g')$ where $H(F)gH(F)$ is closed, then $\pi_G(g) = \pi_G(g')$ over $\bar F$, i.e.
\[
g \in \overline{H(F)g'H(F)} \subset \overline{Hg'H}.
\]

So each $H(F)\times H(F)$-orbits in the fibre of $\pi_{G(F)}$ is contained in an $H\times H$-orbits in the fibre of $\pi_{G}$, which we know there are finitely many. It suffices to show that there are finitely many $H(F)\times H(F)$-orbits contained in $Hg'H$ for $g' \in G(F)$. We have already come very close to this statement in the proof of Corollary \ref{closed_orb}, where the first Galois cohomology classifies $A$-torsors.

Now we fix $g \in G(F)$. If $g' \in G(F)$, we can define
\[
A \coloneqq (H \times H)_g, \quad Y' \coloneqq \{(h_1, h_2) \in H \times H \mid h_1g'h_2^{-1} = g \}.
\]

Then $Y'$ is an $A$-torsor (an $A$-set isomorphic to $A$), where $A$ acts on $Y$ by standard left multiplication. $Y'$ is defined as an $A$-torsor through a fixed point $(h_1, h_2) \in Y'$, and a map
\[
A \rightarrow Y, \quad (a_1, a_2) \mapsto (a_1h_1, a_2h_2).
\]

Recall we can also define a $1$-cocycle
\[
f': \mathrm{Gal}(\bar F / F) \rightarrow A, \quad \varphi \mapsto \big(\varphi(h_1)h_1^{-1}, h_2\varphi(h_2^{-1})\big).
\]

Now $H^1(F, A)$ is the set of equivalence classes of crossed homomorphisms, which corresponds bijectively to the isomorphism class of $A$-torsors. $H^1(F, A)$ is bounded by the number of irreducible components of $A$ which is finite. Suppose $g'' \in G(F)$ is another element, then $Y''$ is another $A$-torsor, with $f''$ defined by $(h_3, h_4) \in Y'$ another $1$-cocycle. $Y'$ and $Y''$ are isomorphic if their corresponding cocycle $f'$ and $f''$ is differed by a cochain, i.e. an element in $H(F) \times H(F)$. This means that $g'$ and $g''$ are in the same $H(F) \times H(F)$-orbit:
\[
g' = h_1^{-1}h_3g''h_4^{-1}h_2, \quad (h_1^{-1}h_3, h_2^{-1}h_4) \in H(F) \times H(F).
\]

Therefore, there are finitely many $H(F) \times H(F)$-orbits in $Hg'H$. From the proof, we also see that the upper bound of number of orbits in a fibre of $\pi_{G(F)}$ depends on $H^1(F, A)$ and $\dim G$, but not on the characteristic of $F$. Denote this upper bound by $C$.

Now we can compare the size of $Z^\sigma$ and $Z$. $\pi_G$ is a closed map by definition, so as a quotient map it is also a dominant map. Then $\pi_G(U)$ is a dense subset of $Y \coloneqq G \sslash H \times H$, and
\[
\dim\pi_G(U) = \dim Y, \quad \dim \pi_G(U^c) < \dim Y.
\]

Over $F$, we have
\[
Z = G(F)/H(F) \times H(F) = U(F)/H(F) \times H(F) \sqcup \pi_{G(F)}^{-1}\big(\pi_G(U^c)(F)\big)/H(F) \times H(F).
\]

On the one hand,
\[
\big|\pi_{G(F)}^{-1}\big(\pi_G(U^c)(F)\big)/H(F) \times H(F)\big| \leq C |\pi_G(U^c)(F)| \leq C|F|^{\dim Y - 1},
\]

since each fibre contains at most $C$ many orbits. On the other hand, Lang-Weil's bound tells use that
\[
|U(F)/H(F) \times H(F)| \geq \frac{1}{2}|F|^{\dim Y},
\]

which consists of points in $Z^\sigma$. Hence,
\[
\frac{|Z^\sigma|}{|Z|} \geq \frac{\frac{1}{2}|F|^{\dim Y}}{\frac{1}{2}|F|^{\dim Y} + C|F|^{\dim Y - 1}} = 1 - \frac{2C}{q + 2C} \geq 1 - \frac{C'}{q}.
\]
\end{proof}

\begin{proof}[Proof of \ref{e_Gel}]
To understand the permutation representation of $G$ on the cosets of $H$, a stand method is to look at the Hecke algebra $\mathcal{H} = \mathrm{End}_G\big(\mathbb{C}[G/H]\big)$. The permutation representation of $G$ on the cosets of $H$ is the induction of trivial representation on $H$, so by Frobenius Reciprocity, we have
\begin{align*}
\mathcal{H} &= \mathrm{Hom}_G\big(\mathbb{C}[G/H], \mathbb{C}[G/H]\big) \\
&= \mathrm{Hom}_G\big(\mathrm{Ind}_H^G\mathbb{C}_H, \mathrm{Ind}_H^G\mathbb{C}_H\big) \\
&= \mathrm{Hom}_H\big(\mathbb{C}_H, \mathrm{Res}_H^G\mathrm{Ind}_H^G\mathbb{C}_H\big) \\
&= \mathbb{C}[H \backslash G/H] = \mathbb{C}[Z].
\end{align*}

Now $\mathcal{H}$ is a semi-simple algebra, and we have the induced anti-involution on $\mathbb{C}[Z]$ from the anti-involution defined on $Z$, i.e.
\[
\sigma\big(H(F)gH(F)\big) \coloneqq H(F)\sigma(g)H(F).
\]
If $\dim\mathbb{C}[Z]^\sigma \geq (1 - \epsilon)\dim\mathbb{C}[Z]$, then we can apply Lemma \ref{multi_one_lemma} to get that $\mathcal{H} \cong \prod M_{n_i}(\mathbb{C})$, and the majority of the matrix algebras are of rank $1$:
\[
\big|\{n_i \mid n_i  = 1\}\big| \geq (1 - 4\epsilon)\big|\{n_i \mid n_i > 0\}\big|.
\]

Because of Schur's lemma, the decomposition of $\mathcal{H}$ is according to the decomposition of $\mathbb{C}[G/H]$ into irreducible representations. Each matrix algebra corresponds to an irreducible representation appeared in $\mathbb{C}[G/H]$ , and its rank $n_i$ is the multiplicity of this irreducible representation in $\mathbb{C}[G/H]$. Therefore, there are at least $(1 - 4\epsilon)$ portion multiplicity-one irreducible representations in $\mathbb{C}[G/H]$.
\end{proof}

\begin{proof}[Proof of \ref{multi_one_thm}]
Apply Theorem \ref{dim_thm} to Theorem \ref{e_Gel} with $\epsilon = \frac{C}{2q}$. Notice that when we pass from $Z$ to $\mathbb{C}[Z]$, since $\sigma$ is an anti-involution, the co-dimension of fixed points is half, so
\[
\frac{|Z^\sigma|}{|Z|} \geq 1 - \frac{C}{q} \Longrightarrow \frac{\dim\mathbb{C}[Z]^\sigma}{\dim\mathbb{C}[Z]} \geq 1 - \frac{C}{2q}
\]
\end{proof}

\vfill

\newpage


\begin{thebibliography}{1}
\small
\addcontentsline{toc}{part}{References}
\vspace{2ex}
\bibitem{Aizenbud}
{\sc A.Aizenbud \& D.Gourevitch}: {\em Generalized Harish-Chandra descent and applications to Gelfand pairs}, arXiv:0803.3395v4 [math.RT], (2015).

\bibitem{Aizenbud2}
{\sc A.Aizenbud \& D.Gourevitch \& A.Minchenko}: {\em Holonomicity of relative characters and applications to multiplicity bounds for spherical pairs}, arXiv:1501.01479v2 [math.RT], (2016).

\bibitem{Aizenbud3}
{\sc A.Aizenbud \& D.Gourevitch \& S.Rallis \& G.Schiffmann}: {\em Multiplicity one theorems,}, annals of Math. v.172, (2010), 1407-1434.

\bibitem{Helminck}
{\sc A.G Helminck \& G.W.Schwarz}: {\em Orbits and invariants associated with a pair of commuting involutions}, Duke Mathematical Journal 106, no. 2, (2001).

\bibitem{Kostant2}
{\sc B.Kostant}: {\em The principal three dimensional subgroup and the Betti numbers
of a complex simple Lie group}, Amer. J. Math. 81 (1959), 973-1032.

\bibitem{Kostant}
{\sc B.Kostant \& S.Rallis}: {\em Orbits and representations associated with symmetric spaces}, Amer. J. Math. 93 (1971), 753-809.

\bibitem{Sun}
{\sc B.Sun \& C.-B.Zhu}: {\em Multiplicity one theorems: the archimedean case}, annals of Math. v.175, no. 1, (2012), 23-44.

\bibitem{Angeli}
{\sc D.D'Angeli}: {\em Groups, Probability and Combinatorics: Different Aspects in Gelfand Pairs Theory}, dissertation submitted to Universit¨¤ di Roma La Sapienza, (2008).

\bibitem{Jacquet}
{\sc H.Jacque \& S.Rallis}: {\em Uniqueness of linear periods}, Compositio Math. v.102, no. 1, (1996), 65-123.

\bibitem{Drezet}
{\sc J.M.Drezet}: {\em Luna¡¯s Slice theorem and applications}, 23d Autumn School in Algebraic Geometry ¡±Algebraic group actions and quotients¡± Wykno (Poland), (2000).

\bibitem{Serre}
{\sc J.P.Serre}: {\em Galois Cohomology}, Springer, (1997).

\bibitem{Ryan}
{\sc J.Ryan}: {\em Gelfand Pairs, Representation Theory of the Symmetric
Group, and the Theory of Spherical Functions}, thesis submitted to Stanford University, (2014).

\bibitem{Jacobson}
{\sc N.Jacobson}: {\em Lie Algebras}, Interscience Tracts on Pure and Applied Mathematics, no. 10. Interscience Publishers, New York (1962).

\bibitem{Yakimova}
{\sc O.Yakimova}: {\em Gelfand pairs}, thesis submitted to Bonn university, (2004).

\bibitem{Newstead}
{\sc P.Newstead}: {\em Introduction to Moduli Problems and Orbit spaces}, Tata Inst. of Fund. Research, Bombay. Springer-Verlag (1978).

\end{thebibliography}
\end{document}